\documentclass{amsart}
\usepackage{amsmath,amssymb,amsthm}
\usepackage{tikz}
\usepackage{hyperref}
\newtheorem{theorem}{Theorem}[section]
\newtheorem{prop}[theorem]{Proposition}
\newtheorem{corollary}[theorem]{Corollary}
\newtheorem{lemma}[theorem]{Lemma}
\theoremstyle{definition}
\newtheorem{defi}[theorem]{Definition}
\theoremstyle{remark}
\newtheorem{Rk}{Remark}[section]

\DeclareMathOperator{\CS}{\Pi}

\newcommand{\R}{\mathbb{R}} 
\newcommand{\C}{\mathbb{C}} 
\newcommand{\N}{\mathbb{N}} 
\newcommand{\Z}{\mathbb{Z}}

\newcommand{\D}{\mathcal D}

\newcommand{\p}{\partial}
\newcommand{\eps}{\varepsilon}
\newcommand{\la}{\lambda}
\newcommand{\pv}{\textrm{p.v.}}
\DeclareMathOperator{\supp}{supp}
\DeclareMathOperator{\Id}{Id}
\DeclareMathOperator{\Range}{Range}
\DeclareMathOperator{\Ker}{Ker}
\DeclareMathOperator{\dist}{dist}
\DeclareMathOperator{\tr}{tr}

\newcommand{\qtq}[1]{\quad\text{#1}\quad}

\let\Re=\undefined\DeclareMathOperator{\Re}{Re}
\let\Im=\undefined\DeclareMathOperator{\Im}{Im}

\numberwithin{equation}{section}

\begin{document}

\title{Asymptotic stability of Benjamin--Ono multisolitons in $L^2(\R)$}

\author{Rana Badreddine}
\address{Department of Mathematics, University of California, Los Angeles, CA 90095, USA.}
\email{badreddine@math.ucla.edu}

\author{Rowan Killip}
\address{CEREMADE, CNRS, Universit\'e Paris Dauphine–PSL, Place du Mar\'echal de Lattre de Tassigny, 75016 Paris, France \&  Department of Mathematics, University of California, Los Angeles, CA 90095, USA}
\email{killip@ceremade.dauphine.fr}

\author{Monica Vi\c{s}an}
\address{Department of Mathematics, University of California, Los Angeles, CA 90095, USA.}
\email{visan@math.ucla.edu}

\begin{abstract}
We prove the following dichotomy result for $L^2(\R)$ solutions to the Benjamin--Ono equation: On windows traveling at any speed, the solution either converges to zero or to a soliton dictated by the spectral properties of the Lax operator associated to the initial data. As an application of this result, we prove asymptotic stability of Benjamin--Ono multisolitons in $L^2(\R)$. Specifically, we show that solutions to the Benjamin--Ono equation emanating from small $L^2(\R)$ perturbations of multisolitons evolve towards a series of separating one-solitons when viewed in windows traveling with these solitons.
\end{abstract}

\maketitle

\tableofcontents

\maketitle

\section{Introduction}\label{S:1}

This paper investigates the asymptotic behavior of solutions to the Benjamin--Ono equation,
\begin{equation}\label{BO}\tag{BO}
    \partial_t u = H\partial_x^2 u - 2u\partial_x u, \qquad u: \R_t\times\R_x\to\R.
\end{equation}
Here $H$ denotes the Hilbert transform; see \eqref{H def}.  This model was introduced in the late 1960s by Benjamin \cite{Benjamin1967} and Davis--Acrivos \cite{Davis1967} to describe interfacial waves in stratified fluids of large total depth. The real-valued function $u(t,x)$ represents the deviation of the interface from equilibrium.

One of the important predictions of \eqref{BO}, observed already in these initial studies, was that it supports solitary waves. Concretely,
\begin{equation}\label{1-soliton}
Q_{\lambda,c}(t,x) = \frac{-4\lambda}{1+ 4\lambda^2(x-c+2\lambda t)^2}
\end{equation}
is a solution for every $\lambda<0$ and $c\in\R$.  Indeed, both \cite{Benjamin1967,Davis1967} confirmed the existence of such solitary waves experimentally.  The ease with which such waves were generated and their apparent stability prompted Ono \cite{Ono1975} to posit that these are solitons.  A key characteristic of solitons is that they exhibit particle-like behavior under interactions.  This was soon confirmed by the construction of exact multisoliton solutions; see \cite{Case1979,MR516327,Joseph1977,Matsuno1979,MeissPereira}.  These nonlinear structures will be important characters in our story; the following description of them is adapted from \cite[\S3.1]{MR759718}:

\begin{defi}[$N$-Multisolitons]\label{D:multisoliton}
Given an integer $N\geq 1$, a set of negative spectral parameters $\Lambda= \{\la_1< \cdots <\la_N\}$, and spatial centers $\vec c =(c_1, \ldots, c_N)\in \R^N$, we define a corresponding \emph{$N$-soliton profile} by 
\begin{align}\label{MS1}
Q_{\Lambda, \vec c} \,(x)= - 2\Im \frac{d}{dx} \ln \det\bigl[M(x)\bigr] , 
\end{align}
where $M(x)$ is the $N\times N$ matrix with entries
\begin{align}\label{MS2}
M_{jk}(x) = \begin{cases}
-i(x-c_j) - \frac{1}{2\la_j}, \qquad &\text{if } j=k,\\
-\frac{1}{\la_j-\la_k},  \qquad &\text{if } j\neq k.
\end{cases}
\end{align}
The \emph{$N$-soliton solution} to \eqref{BO} with initial data $Q_{\Lambda, \vec c}$ is then 
\begin{align}\label{MS3}
u(t,x) = Q_{\Lambda, \vec c(t)}(x) \qtq{with} c_j(t) = c_j - 2\la_j t.
\end{align}
\end{defi}

Our names for the two families of parameters $\Lambda$ and $\vec c$ are indicative of their physical significance, as we will now explain.  Following the KdV model, we expect solitons to be associated to discrete eigenvalues of the Lax operator.  This is indeed the case for \eqref{BO}: the parameters $\Lambda$ are the eigenvalues of the Lax operator $L_u$ defined in \eqref{Lax op} when $u=Q_{\Lambda, \vec c}$.  These parameters also dictate the speed, height, and width of the individual solitons; compare \eqref{1-soliton}.

While the solitons are overlapping, it is difficult to ascribe an exact center to each; however, they separate as $t\to\pm\infty$ and travel with speeds $2|\lambda_j|$.  Unlike KdV, the solitons do not advance or retard one another as they interact; the points $c_j(t)$ in \eqref{MS3} describe their centers in both limits $t\to\pm\infty$.

While exact multisoliton solutions indicate that individual solitons interact elastically, this is not sufficient to confirm their stability.  Exact multisolitons correspond to very specific initial data.  To prove stability of solitons, we must allow for very general perturbations.

Stability is inherently a dynamical question, so any discussion of stability must begin with the construction of solutions.  The question of well-posedness for the richest possible class of initial data has received a great deal of attention over the years, with a particular emphasis on the $H^s(\R)$ class of Sobolev spaces.  An outline of these developments can be found in \cite{Killip2024}, the main result of which is as follows:

\begin{theorem}[Global well-posedness, \cite{Killip2024}]\label{T:KLV main}
The equation~\eqref{BO} is globally well-posed in $H^s(\R)$ for all $s>-\frac12$.
\end{theorem}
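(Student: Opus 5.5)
The plan follows the Killip--Laurens--Vi\c{s}an strategy, which is built on the Lax pair $L_u = -i\partial_x - C_+ u C_+$ acting on the Hardy space $L^2_+(\R)$. Here $C_+$ is the Cauchy--Szeg\H{o} projection. The argument has four stages: a priori bounds from a conserved quantity, an explicit formula for Schwartz solutions, equicontinuity, and finally extension of the flow.

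\emph{Step 1: a priori bounds.} For Schwartz initial data we already have global smooth solutions from the classical theory. For such solutions we introduce the generating function
\[
\beta(\kappa;u) = \langle (L_u+\kappa)^{-1} C_+u,\, C_+u\rangle ,
\]
defined for $\kappa$ large relative to the size of $u$ in $H^s$. It is conserved by the flow, since $L_u$ evolves by conjugation. Expanding it in powers of $u$, its leading term is comparable to a weighted $H^{-1/2}$-type norm at frequency scale $\kappa$. Integrating in $\kappa$ against a suitable weight then gives
\[
\sup_t \|u(t)\|_{H^s} \lesssim_s \|u(0)\|_{H^s}
\]
for every $-\tfrac12<s<0$, and every $s\ge 0$ is handled by the polynomial conservation laws. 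The condition $s>-\tfrac12$ is precisely what makes the series for $\beta$ converge: the Hilbert--Schmidt bound $\|C_+ u (L_0+\kappa)^{-1/2}\|_{\mathfrak I_2}^2\lesssim \kappa^{-1}\int |\hat u|^2\,\log$-type weights remains small for large $\kappa$.

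\emph{Step 2: explicit formula.} For Schwartz data we use G\'erard's explicit formula,
\[
C_+u(t,z) = \tfrac{1}{2\pi i} I_+\bigl[(X + 2tL_{u_0} - z)^{-1} C_+u_0\bigr],
\qquad \Im z>0,
\]
where $X$ denotes multiplication by $x$. This expresses $u(t)$ through the resolvent of the operator $X+2tL_{u_0}$, which depends on $u_0$ continuously in the resolvent sense. Consequently $u(t)$, tested against the Poisson kernels at each point $z$ of the upper half-plane, depends continuously on $u_0\in H^s$, uniformly for $t$ in compact sets. This gives convergence in a weak topology.

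\emph{Step 3: equicontinuity.} Weak convergence is upgraded to norm convergence by showing that the orbits $\{u(t): t\in[-T,T]\}$ of any $H^s$-precompact family of Schwartz data are $H^s$-equicontinuous, meaning they have uniformly small high-frequency tails. This comes from the same $\beta(\kappa)$ argument, because the tail $\int_{|\xi|>\kappa}$ of the norm is controlled by a conserved quantity. Tightness in space is not needed for $H^s$ convergence once we have Poisson-kernel convergence together with high-frequency equicontinuity.

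\emph{Step 4: extension of the flow.} Combining Steps 2 and 3, the data-to-solution map extends uniquely and continuously from Schwartz functions to $H^s$. This yields global well-posedness, with the global-in-time statement coming from the uniform bound of Step 1.

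The main obstacle is the low-regularity range near $s=-\tfrac12$. There, both justifying the explicit formula for the limiting operators and proving equicontinuity require delicate operator estimates on $(L_u+\kappa)^{-1}$ with $u$ merely in $H^s$. To overcome this, I would first aim to prove that $C_+u(L_0+\kappa)^{-1/2}$ is a small Hilbert--Schmidt operator uniformly on equicontinuous sets.
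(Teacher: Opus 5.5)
This theorem is not proved in the paper; it is imported from \cite{Killip2024}, where it is obtained by the method of commuting flows. That argument uses approximating Hamiltonian flows built from $\beta(\kappa)$, a gauge transformation, and equicontinuity; it does not use the explicit formula. Your route is therefore different, and as written it has a genuine gap between Steps~2--3 and Step~4.

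\textbf{The gap.} Your assertion that ``tightness in space is not needed'' is false.
\begin{itemize}
\item Convergence of $\Pi u_n(t,z)$ for each $z\in\C_+$, together with a uniform $H^s$ bound, gives only weak convergence in $H^s$.
\item $H^s$-equicontinuity only rules out loss of compactness at high frequencies. Nothing prevents low-frequency mass from escaping to spatial infinity or spreading out.
\item For instance, take $u_n=\phi(\cdot-n)$ or $u_n=n^{-1/2}\phi(\cdot/n)$ with $\phi$ Schwartz. Both sequences are bounded and equicontinuous in every $H^s$, and $\Pi u_n(z)\to0$ for every $z\in\C_+$. Yet $\|u_n\|_{H^s}\not\to0$.
\end{itemize}
Consequently Steps~2 and~3 do not show that $u_n(t)$ is Cauchy in $H^s$ when $u_{0,n}\to u_0$. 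Neither the existence of the continuous extension nor its continuity follows.

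The paper itself illustrates this. The proof of Theorem~\ref{T:Strong_loc_cv_L2} combines exactly these two ingredients (the explicit formula off the real axis plus equicontinuity). Via Rellich--Kondrachov it obtains convergence only in $L^2([-R,R])$.

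To close the gap you need a genuinely new estimate. One option is tightness of $\{u_n(t): n\geq 1,\ |t|\leq T\}$, proved directly: low frequencies propagate at bounded speed, and high frequencies are handled by equicontinuity. This is delicate for $s<0$, where there is no conserved localized mass. Another option is an identity that pins down the norm of the weak limit. The commuting-flows argument sidesteps the issue entirely. It compares the BO flow in norm with the $H_\kappa$ flows, which are well-posed in $H^s$, uniformly on bounded equicontinuous sets, so no weak-to-strong upgrade is ever needed.

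\textbf{The Hilbert--Schmidt claim.} As stated, it is wrong. In Fourier variables, $C_+u(L_0+\kappa)^{-1/2}$ has kernel $(2\pi)^{-1/2}\widehat u(\xi-\eta)(\eta+\kappa)^{-1/2}$ on $\xi,\eta\geq 0$. Its squared $\mathfrak I_2$ norm is therefore $\frac{1}{2\pi}\int_0^\infty \frac{1}{\eta+\kappa}\int_{-\eta}^\infty |\widehat u(\zeta)|^2\,d\zeta\,d\eta=\infty$ for every $u\neq 0$. The estimate that singles out $s>-\frac12$ concerns the symmetrized operator $(L_0+\kappa)^{-1/2}C_+u(L_0+\kappa)^{-1/2}$. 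Its squared Hilbert--Schmidt norm is $\approx\int \frac{\log(2+|\xi|/\kappa)}{|\xi|+\kappa}|\widehat u(\xi)|^2\,d\xi$.

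\textbf{Step~2 for $s<0$.} This step is far from automatic when $s<0$.
\begin{itemize}
\item $\Pi u_0\notin L^2_+$, so the bound \eqref{BEFG} is unavailable.
\item The construction of $X-2tL_u$ in Lemma~\ref{L:A0U} relies on $u\in L^2$.
\item You would need a form-level theory for this non-selfadjoint operator, and for $I_+$ applied to its resolvent.
\end{itemize}
Note also that $X$ is not multiplication by $x$. It is the generator $i\partial_\xi$ on $H^1([0,\infty))$ without boundary condition, i.e.\ the adjoint of multiplication by $x$. This is exactly why $I_+\circ(X-z)^{-1}$ is nontrivial.
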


By contrast, it is known that the data-to-solution map for \eqref{BO} does not admit a continuous extension to $H^s(\R)$ for any $s<-\frac12$.

Just as the well-posedness question for \eqref{BO} has received a great deal of attention, so has the stability of its soliton and multisoliton solutions.  The first type of stability to be demonstrated was \emph{orbital} stability.  This means that for initial data that is close to a (multi)soliton, the solution remains close to the manifold of \text{(multi)}solitons with the same spectral parameters $\Lambda$, but arbitrary centers $\vec c$.  This problem is often attacked via the Dirichlet--Lagrange principle: one shows that \text{(multi)}solitons are optimizers of a constrained variational problem based on conserved quantities.  Such an approach was applied to \eqref{BO} solitons in \cite{Albert1992,Albert1999,MR887857,MR715035,MR2082818,MR897729,MR886343} and to its multisolitons in \cite{BKV25,LanWang2025,Matsuno2006,MR1442235,MR2202311}.  The traditional variational approach to multisolitons, pioneered in \cite{MR1220540}, is based on the polynomial conservation laws and so requires ever-higher regularity hypotheses as the complexity $N$ of the multisoliton increases.  By comparison, the paper \cite{BKV25} proved orbital stability of all multisolitons in $H^s(\R)$ all the way down to $s>-\frac12$, which coincides with the optimal well-posedness threshold:

\begin{theorem}[Uniform orbital stability of multisolitons, \cite{BKV25}]\label{Th: Orbital Stability}
Fix $-\frac12<s\leq \frac12$, an integer $N\geq 1$, and a set of negative spectral parameters $\Lambda=\{\la_1<\cdots<\la_N\}$. For every \(\eps>0\), there exists $\delta>0$ so that for every initial data $u_0\in H^s(\R)$ satisfying
\[
      \inf_{\vec{c}\in\R^N}\,\|u_0-Q_{\Lambda,\vec{c}}\,\|_{H^s}<\delta,
\]
the corresponding solution \( u(t) \) of \eqref{BO} satisfies
\[
     \sup_{t\in\R}\inf_{\vec{c}\in\R^N}\, \|u(t)-Q_{\Lambda,\vec{c}}\,\|_{H^s}<\eps.
\]
\end{theorem}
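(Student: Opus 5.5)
The plan follows the variational strategy of \cite{BKV25}, carried out at the level of the Lax operator rather than via the polynomial conservation laws. The tools are the Lax operator $L_u$ of \eqref{Lax op}, acting on the Hardy space $L^2_+$, and the associated perturbation determinant / regularized Fredholm determinant. This determinant is conserved under the flow \eqref{BO} and is defined and continuous for $u\in H^s$, $s>-\frac12$. It gives a one-parameter family of conserved functionals $\beta(\kappa;u)$, $\kappa>0$. Their expansions in $1/\kappa$ reproduce the polynomial conservation laws, but each $\beta(\kappa;\cdot)$ individually makes sense at the low regularity $H^s$. Everything is proved first for $u_0$ in a dense Schwartz class and then transferred to $H^s$ using Theorem~\ref{T:KLV main} and continuity of the data-to-solution map, together with continuity of the functionals.

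The first step is a \emph{spectral rigidity} statement. If $u\in H^s$ is close to $Q_{\Lambda,\vec c}$, then $L_u$ has exactly $N$ simple negative eigenvalues near $\la_1,\dots,\la_N$. This holds because $L_u-L_{Q}$ is relatively compact and small in a suitable resolvent sense, so the discrete spectrum moves continuously. Since the spectrum is invariant under the Lax flow, $u(t)$ retains $N$ eigenvalues $\tilde\la_j$ near $\la_j$ for all $t$.

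The second step is the core variational characterization. Fix a finite set of spectral parameters $\kappa_1,\dots,\kappa_{N+1}$ and form a combination $\mathcal F(u)=\sum_m a_m\beta(\kappa_m;u)$. The weights are chosen, by Lagrange interpolation in the variable $\kappa$ against the eigenvalue contributions $\log\frac{\kappa-\la}{\kappa}$-type terms, so that the $N$-solitons with spectrum $\Lambda$ are exactly the minimizers of $\mathcal F$ among functions with prescribed values of the remaining conserved quantities. The required inequality is a trace formula. It expresses $\beta(\kappa;u)$ as a sum over discrete eigenvalues plus a nonnegative continuous-spectrum contribution, and this contribution vanishes precisely for reflectionless potentials, which are the multisolitons. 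Coercivity is then phrased as a compactness statement. If $\mathcal F(u_n)\to\min$ and the eigenvalues converge to $\Lambda$, then $u_n$ converges in $H^s$ to the multisoliton manifold modulo translations of the centers. Here I would use profile decomposition or concentration compactness in $H^s$. The continuous-spectrum part forces the dispersive remainder to vanish. The separated pieces each carry a subset of the eigenvalues, and by the $N$-soliton classification (reflectionless $\Rightarrow$ of the form \eqref{MS1}) each is a multisoliton. Multisolitons with arbitrary separations lie in the family $Q_{\Lambda,\vec c}$ up to small error, since widely separated $Q_{\Lambda,\vec c}$ are approximately superpositions.

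Finally, suppose the theorem fails. Then there are $u_{0,n}$ with distance less than $1/n$ to the manifold and times $t_n$ with $\inf_{\vec c}\|u_n(t_n)-Q_{\Lambda,\vec c}\|_{H^s}\ge\eps$. By conservation, $u_n(t_n)$ is a minimizing sequence with eigenvalues tending to $\Lambda$. The compactness statement yields a contradiction, and the uniformity in $t$ and in the initial centers is automatic. The main obstacle is step two at low regularity, namely proving that the continuous-spectrum part of $\beta(\kappa;\cdot)$ controls the $H^s$ distance to the reflectionless set for $s<0$. For that I would use the Hilbert--Schmidt description of $\beta$, in which $\beta(\kappa;u)\approx\|u\|_{H^{-1/2}_\kappa}^2$-type quantities appear, and use several $\kappa$'s to interpolate up to $H^s$ norms, following the approach of \cite{Killip2024}.
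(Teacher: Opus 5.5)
This theorem is not proved in the paper. It is imported from \cite{BKV25}, and the only hints here about its proof are the low-regularity Wu identities (Proposition~\ref{P:Wu_identity}), the trace inequality \eqref{E:trace_ineq} with equality \eqref{soliton mass} on multisolitons, and Sun's characterization (Theorem~\ref{T: Sun}). Your overall architecture is reasonable: low-regularity conserved spectral functionals, spectral rigidity near $\Lambda$, an equality-case characterization, and a contradiction/compactness argument. However, your step two carries the whole argument and is asserted rather than established. As formulated it is also not available, because for non-decaying $H^s$ data there is no scattering theory, hence no reflection coefficient and no ``reflectionless'' class to appeal to.

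The missing idea is to take $\beta(\kappa;u)=\langle \Pi u,(L_u+\kappa)^{-1}\Pi u\rangle$, interpreted by duality when $s<0$. It is conserved because $\Pi u(t)=U(t)\Pi u_0$ and $L_{u(t)}U(t)=U(t)L_{u_0}$ (Proposition~\ref{P:Unitary_Propagator}, then approximation). Expand it through the spectral measure of $\Pi u$ relative to $L_u$:
\[
\beta(\kappa;u)=\sum_{\lambda_j\in\sigma_d(L_u)}\frac{2\pi|\lambda_j|}{\kappa+\lambda_j}+\int_{[0,\infty)}\frac{d\mu^{\mathrm{c}}_{\Pi u}(E)}{E+\kappa}.
\]
Positivity of the remainder is then automatic. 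The Wu identity $|\langle u,f_j\rangle|^2=2\pi|\lambda_j|$ is what makes the atomic part a function of the conserved eigenvalues alone, so no interpolation in $\kappa$ is needed. Vanishing of the remainder means $\Pi u$ lies in the closed span of the eigenfunctions, which is condition (3) of Theorem~\ref{T: Sun}. To conclude that a limiting profile is some $Q_{\Lambda,\vec c}$, you must still verify condition (1), $\langle x\rangle u\in L^2$. One route is the fact that eigenfunctions lie in $\D(X)$ (Corollary~\ref{C: f in X}); your sketch does not address this.

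The second gap is the one you flag yourself, and your proposed fix does not close it. For fixed $\kappa$, smallness of the continuous part controls only an $H^{-1/2}$-type norm (cf.\ \eqref{E:Equiv_norm} with $\sigma=-\frac12$). No interpolation among finitely many values of $\kappa$ turns $H^s$ boundedness plus $H^{-1/2}$ smallness into $H^s$ smallness; high-frequency bumps of fixed $H^s$ norm are a counterexample. What you need is uniform high-frequency tightness ($H^s$-equicontinuity) of the sequence $u_n(t_n)$. This follows from conservation of $\beta(\kappa;\cdot)$ for all large $\kappa$, as in \cite{Killip2024}, and it is exactly the mechanism the paper uses to upgrade weak to strong convergence in the proof of Theorem~\ref{T:Strong_loc_cv_L2}.

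Finally, in the dichotomy scenario you assert that widely separated multisolitons with complementary spectral sets are $H^s$-close to some $Q_{\Lambda,\vec c}$. This requires decoupling of \eqref{MS1} for arbitrary well-separated centers, not merely along the flow as in \eqref{508}, and you should supply that argument.
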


A very different approach to orbital stability is employed in the papers \cite{GTT2009,KenMart2009}.  As noted, the explicit multisoliton solutions ultimately resolve into well-separated individual solitons ordered by their speed.  These papers demonstrate that once the solution has arrived at this favorable configuration, it will remain so into the future.  (Stability at earlier times is guaranteed by well-posedness, albeit with poor quantitative dependence.)  Taking this approach one step further, the paper \cite{KenMart2009} proves \emph{asymptotic} stability of multisolitons.  Concretely, initial data that is a small $H^{1/2}(\R)$ perturbation of a multisoliton is shown to converge to a multisoliton (with nearby parameters) in $L^2$ sense \emph{on windows traveling with the constituent solitons}.  The conservative nature of the equation forbids convergence globally in space.  The point is that solitons travel at distinct speeds and in the opposite direction to radiation; thus, they should eventually separate, allowing for convergence on windows traveling with them.  Incidentally, an equicontinuity argument that we will present later in this paper (see the proof of Theorem~\ref{T:Strong_loc_cv_L2}) permits one to upgrade the $L^2$-windowed convergence of \cite{KenMart2009} to the $H^{1/2}$ topology, thereby matching the regularity of their initial perturbation.

It is natural to compare the asymptotic stability of (multi)solitons with soliton resolution, which has been proved for many integrable models via inverse scattering technology.  This approach gives not only the means of determining the multisoliton component directly from the initial data but also detailed asymptotics on the radiational part of the solution.  On the other hand, existing implementations of this method have always required rather stringent spatial decay hypotheses, while works on stability have focused on $H^s(\R)$ metrics that better reflect the dynamical invariances of the equation.     In fact, once one removes the spatial decay hypotheses, one must fear embedded singular spectra, and the existing soliton+radiation dogma has no candidate for what long-time behaviour singular continuous spectrum may entail!   

Thus far, inverse scattering technology for \eqref{BO} has not yet reached a level of development that has allowed a proof of soliton resolution.  Nevertheless, by taking a very different approach based on the explicit formula of \cite{MR4662323}, such full long-time asymptotics were derived recently in the breakthrough work \cite{GGM26}.  This paper requires, inter alia, that the initial data $u_0$ satisfy $\langle x\rangle u_0(x) \in H^1(\R)$.

In this paper, we prove $L^2$-asymptotic stability of multisolitons under general $L^2(\R)$ perturbations.  Concretely,

\begin{theorem}[Asymptotic Stability]\label{T:Intro AS}
Fix $N\geq 1$ and a set of negative spectral parameters $\Lambda= \{\la_1< \cdots <\la_N\}$. There exists $\delta>0$ so that if the initial data $u_0 \in L^2(\R)$ satisfies
\begin{equation}\label{I:907}
\| u_0 - Q_{\Lambda, \vec c} \|_{L^2} < \delta \qtq{for some} \vec{c}= (c_1, \ldots, c_N)\in \R^N,
\end{equation}
then there exist new parameters $\tilde \Lambda,~\tilde c$ so that the solution $u(t)$ to \eqref{BO} with initial data $u(0)=u_0$ converges to the multisoliton solution $Q_{\tilde\Lambda,\, \tilde c(t)}$ on windows travelling with the constituent solitons:
\begin{align}\label{I:Main_Asymptotic_Limit}
\lim_{|t| \to \infty}\, \bigl\| u(t, x - 2t\tilde\la_j) - Q_{\tilde\Lambda,\, \tilde c(t)}(x - 2t\tilde \la_j) \bigr\|_{L^2_x([-R, R])} = 0
\end{align}
for each $1\leq j \leq N$ and every $R > 0$.\end{theorem}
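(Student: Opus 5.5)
Our plan is to deduce Theorem~\ref{T:Intro AS} from the dichotomy result announced in the abstract: on any window traveling at speed $v$, the solution either converges to zero or to a single soliton whose parameter is a discrete eigenvalue $\la$ of the Lax operator $L_{u_0}$ with $v=-2\la$. The first step is spectral. $L_{u}$ is isospectral under the flow, and its discrete spectrum depends continuously on $u$ in $L^2$. Hence for $\delta$ small, $L_{u_0}$ has exactly $N$ simple negative eigenvalues $\tilde\Lambda=\{\tilde\la_1<\cdots<\tilde\la_N\}$, each near the corresponding $\la_j$, and no other negative discrete spectrum. This should follow from a resolvent/Birman--Schwinger perturbation argument together with the explicit spectrum of $L_{Q_{\Lambda,\vec c}}$.

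The second step uses orbital stability, Theorem~\ref{Th: Orbital Stability} at $s=0$. For all $t$ it gives centers $\vec c(t)$ with $\|u(t)-Q_{\Lambda,\vec c(t)}\|_{L^2}<\eps$. The multisoliton profile then has mass of order one near each $c_j(t)$. On the other hand, the window traveling at speed $-2\tilde\la_j$ cannot see zero, because of the dichotomy. More precisely, we will argue that the solution restricted to that window converges to a soliton $Q_{\tilde\la_j,\tilde c_j}(\cdot-\,\cdot)$ for some phase $\tilde c_j$. The alternative of convergence to zero must be excluded. For this we use the spectral data: the eigenvalue $\tilde\la_j$ must be carried somewhere. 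We would show that a window speed equal to $-2\tilde\la_j$ is the only place mass of that size can concentrate, using a localized version of the conservation of $\tr$-type quantities attached to the eigenvector. In practice the dichotomy theorem likely already asserts that each discrete eigenvalue is realized in the window of its own speed.

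The third step assembles the multisoliton. Since $\tilde\la_j$ are distinct, the windows separate linearly in time. A multisoliton $Q_{\tilde\Lambda,\tilde c(t)}$ with $\tilde c_j(t)=\tilde c_j-2\tilde\la_j t$ is, on the window around $\tilde c_j(t)$, asymptotically equal to the single soliton $Q_{\tilde\la_j,\tilde c_j(t)}$ in $L^2([-R,R])$. This follows from Definition~\ref{D:multisoliton}: the off-diagonal entries of $M$ are bounded while the diagonal entries grow, so the determinant factorizes up to $O(1/|t|)$ errors. Combining this with the windowed soliton limits yields \eqref{I:Main_Asymptotic_Limit} for $t\to+\infty$. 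The case $t\to-\infty$ follows by the symmetry $u(t,x)\mapsto u(-t,-x)$, since the multisoliton centers are the same in both limits. We must also check that the phases $\tilde c_j$ obtained at $\pm\infty$ agree. This should follow because these phases are determined by the scattering/spectral data of $L_{u_0}$, e.g.\ via the explicit formula or the eigenfunction normalization, and not by the direction of time.

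The main obstacle is the second step: showing that the window moving at speed $-2\tilde\la_j$ converges to a soliton with exactly the right parameter and a fixed phase, rather than to zero or to a drifting profile. This is the content of the dichotomy. For it we would first establish compactness of the windowed orbit via $L^2$ equicontinuity, which comes from the conserved Lax spectral data. We would then identify any limit point as a coherent solution whose Lax operator has spectrum contained in $\{\tilde\la_j\}$, and use rigidity to show it must be a single soliton.
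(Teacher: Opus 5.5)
Your outer skeleton matches the paper's: perturb the $N$ eigenvalues to $\tilde\la_j$, obtain soliton convergence in the window of speed $-2\tilde\la_j$, then decouple the multisoliton. However, the middle step carries essentially all of the content, and your proposal does not supply it. You either invoke the dichotomy in a form you only guess at, or you sketch a compactness--rigidity proof that does not work as stated.

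Specifically:
\begin{enumerate}
\item Orbital stability gives centers $\vec c(t)$ but says nothing about how they move. So it cannot show that the window of speed $-2\tilde\la_j$ sees a bump rather than zero.
\item Take an $L^2_{\mathrm{loc}}$ limit point $w$ of $u(t_n,\cdot-2t_n\tilde\la_j)$. Isospectrality plus strong resolvent convergence give at best $\sigma_d(L_w)\subset\sigma_d(L_{u_0})$. This permits $w=0$: eigenvalues are lost when mass leaves the window, exactly as happens to the other solitons. It also permits dust eigenvalues, and it does not force $\sigma_d(L_w)=\{\tilde\la_j\}$.
\item Concluding that $w$ is a soliton would then need a Liouville-type rigidity theorem for $L^2$ solutions whose windowed orbits are merely locally precompact. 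You neither state nor prove one.
\item Most importantly, the statement uses a single multisoliton $Q_{\tilde\Lambda,\tilde c(t)}$ for both $t\to+\infty$ and $t\to-\infty$. You must therefore show that the modulated center converges to a constant, and that the same constant $\tilde c_j$ appears in both directions. The reflection $u(t,x)\mapsto u(-t,-x)$ is a genuine symmetry, but it only converts the $t\to-\infty$ problem into the $t\to+\infty$ problem for $u_0(-\cdot)$. Matching the two sets of centers is the no-phase-shift property, which you merely assert.
\end{enumerate}

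The missing idea is the generalized explicit formula $[U(t)h](z)=\frac{1}{2\pi i}I_+\bigl((X-2tL_{u_0}-z)^{-1}h\bigr)$. With $h=\Pi u_0$ and $z\mapsto z-2t\la$ it reads $\Pi u(t,z-2t\la)=\frac{1}{2\pi i}I_+\bigl((X-2t(L_{u_0}-\la)-z)^{-1}\Pi u_0\bigr)$. Unitarity of $U(t)$ gives the uniform bound $|I_+((X-2tL_{u_0}-z)^{-1}h)|\le(4\pi\Im z)^{-1/2}\|h\|_{L^2}$.

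Suppose $\la$ is an eigenvalue with normalized eigenfunction $f$. Then $(X-2t(L_{u_0}-\la)-z)f=(\langle f,Xf\rangle-z)(f+g)$ with $g\perp f$. Hence the component $\langle f,\Pi u_0\rangle f$ contributes the $t$-independent term $\frac{\langle f,\Pi u_0\rangle I_+(f)}{2\pi i(\langle f,Xf\rangle-z)}$. The Wu identities give $\Im\langle f,Xf\rangle=\frac1{2\la}$ and $\langle f,\Pi u_0\rangle I_+(f)=2\pi$, so this term equals $\frac{2i|\la|}{2|\la|(z-c)+i}$ with $c=\Re\langle f,Xf\rangle$. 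This is visibly the same for both signs of $t$.

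The remainder, which absorbs $g$, lies in $\Ker(L_{u_0}-\la)^\perp=\overline{\Range(L_{u_0}-\la)}$. Up to an arbitrarily small error, controlled by the uniform bound, it equals $(L_{u_0}-\la)h$ with $\widehat h\in C^\infty_c((0,\infty))$. The identity $(L_{u_0}-\la)h=\frac1{2t}\bigl[(X-z)h-(X-2t(L_{u_0}-\la)-z)h\bigr]$, together with $I_+(h)=0$, makes its contribution $O(1/|t|)$. This yields convergence pointwise in $\C_+$. $L^2$-equicontinuity of the orbit then upgrades it to convergence in $L^2([-R,R])$.

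Two smaller points. First, your Step~1 assertion that $L_{u_0}$ has no negative eigenvalues besides the $N$ perturbed ones is false. Adding a distant soliton of tiny mass is an arbitrarily small $L^2$ perturbation that creates another eigenvalue. What holds is the trace bound $\sum|\tilde\la_k|\lesssim\delta$ over the extra eigenvalues; this is harmless for the windowed statement but fatal to a rigidity argument that uses $\sigma_d=\tilde\Lambda$. Second, your Step~3 decoupling of $Q_{\tilde\Lambda,\tilde c(t)}$ into separated one-solitons is fine and is what the paper uses.
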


For a more complete statement, see Theorem~\ref{T:Main_Asymptotic}.  There, one will also find how $\tilde\Lambda$ and $\tilde c$ are determined by the initial data $u_0$, as well as bounds on how far they can move from $\Lambda$ and $c$.

The further development of ideas from \cite{GGM26}, in the absence of additional spatial decay hypotheses, will play a major role in the proof of Theorem~\ref{T:Intro AS}.  In fact, we will be able to show that solitons are the \emph{only} coherent structures in the long-time asymptotics of $L^2(\R)$ solutions:

\begin{theorem}\label{T:Intro Strong_loc_cv_L2}
Let $u(t)$ denote the solution to \eqref{BO} with data $u(0)=u_0\in L^2(\R)$. Given $\la \in \R$, we have:\\
{\upshape (i)} If $\lambda$ is not eigenvalue of $L_{u_0}$, then for any $R>0$,
\begin{equation*}
\lim_{|t| \to \infty}\, \|  u(t, x - 2t \la) \|_{L^2([-R,R])} = 0.
\end{equation*}
{\upshape (ii)} If $\lambda$ is an eigenvalue of $L_{u_0}$ and $f$ is an $L^2$-normalized eigenfunction associated with $\la$, then for any $R>0$,
\begin{equation*}
\lim_{|t| \to \infty} \, \left\| u(t, x - 2t \la) - \tfrac{4|\la|}{4\lambda^2(x-c)^2 + 1 } \right\|_{L^2([-R,R])} = 0, \qtq{where} c: = \Re\langle f, Xf \rangle 
\end{equation*}
and the operator $X$ is as defined on \eqref{X0}.
\end{theorem}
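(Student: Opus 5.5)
We will work on the Hardy space and use the Gérard explicit formula for \eqref{BO}. First we reduce to smooth data. Write $u = \Pi u + \overline{\Pi u}$. The explicit formula expresses $\Pi u(t)$ through the resolvent of $X + 2tL_{u_0}$ acting on $\Pi u_0$:
\[
\Pi u(t,z) = \tfrac{1}{2\pi i}\bigl\langle (X + 2tL_{u_0} - z)^{-1}\Pi u_0, 1\bigr\rangle, \qquad \Im z>0.
\]
Here $L_{u_0}=-i\partial_x - \Pi u_0 \Pi$ is the Lax operator and $X$ is multiplication by $x$ on the Hardy space, as in \eqref{X0}.

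The convergence statements are only local in $L^2$. Solutions are $L^2$-bounded, since the $L^2$ norm is conserved. We therefore aim to prove weak convergence of $u(t,\cdot-2t\lambda)$ to the claimed limit, and then upgrade it to strong local $L^2$ convergence by an equicontinuity argument. The upgrade step is the one the introduction advertises as the proof of Theorem~\ref{T:Strong_loc_cv_L2}. Equicontinuity in $L^2_{loc}$ should come from well-posedness in $H^s$ for some $s<0$ combined with the $L^2$ bound. Concretely, tightness in frequency uniformly in time can be extracted from conservation laws of the type used in \cite{Killip2024}, which control the high-frequency part of $u(t)$ uniformly.

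Since weak convergence is tested against a dense set, it suffices to study the Poisson-type quantities $\Pi u(t,z+2t\lambda\cdot(-1))$ evaluated at fixed points $z$ in the upper half-plane; windows translate by $-2t\lambda$. Translating the spatial variable by $x\mapsto x-2t\lambda$ corresponds to replacing $z$ by $z-2t\lambda$. This gives
\[
\Pi u(t,z-2t\lambda)=\tfrac{1}{2\pi i}\bigl\langle \bigl(X + 2t(L_{u_0}-\lambda) - z\bigr)^{-1}\Pi u_0,1\bigr\rangle .
\]
We now decompose $\Pi u_0$, spectrally for $L_{u_0}$, into its component in $\ker(L_{u_0}-\lambda)$ and its complement.

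For the complement, the factor $2t(L-\lambda)$ is large away from $\lambda$. The central difficulty is to show that the resolvent tends weakly to zero as $|t|\to\infty$ on vectors whose spectral measure puts no mass at $\lambda$. Since $X$ is unbounded and does not commute with $L$, this step is delicate. We would use the commutator identity $[X,L_{u_0}]=i+$ (a rank-one term, via $[X,\Pi u\Pi]$), in the spirit of \cite{GGM26}. The aim is to write
\[
(X+2t(L-\lambda)-z)^{-1} = (2t)^{-1}(L-\lambda - \zeta)^{-1}(\ldots)
\]
approximately, and then invoke the RAGE/Wiener-type principle that $\|\mathbf 1_{|L-\lambda|<\eta}\,\psi\|\to0$ as $\eta\to0$ whenever $\psi$ has no atom at $\lambda$. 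Note that $\Pi u_0$ must lie in the domain of $X$, i.e.\ in the form domain where $\langle x\rangle$ weights are avoided. To arrange this, we approximate $u_0$ by Schwartz data, for which $\lambda$ may become an eigenvalue or stop being one. We then pass to the limit using the uniform continuity of the flow on bounded $t$ and, crucially, a bound on the resolvent that is uniform in $t$, which comes from $\Im z>0$ and the dissipativity $\Im\langle (X+2tL)f,f\rangle$ relation. This uniform-in-$t$ approximation is the main obstacle. I would try to prove it by showing that the map $u_0\mapsto \Pi u(t,z-2t\lambda)$ is Lipschitz in $L^2$ uniformly in $t$ for fixed $z$, using the $L^2$-continuity of the explicit formula.

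For the eigenvalue part (ii), the eigenspace of $L_{u_0}$ at $\lambda<0$ is one-dimensional, spanned by $f$, with $f\in\operatorname{dom}X$ and $\langle \Pi u_0,f\rangle$ of modulus $\sqrt{2\pi|\lambda|}$ by the standard identity. Projecting onto $f$, the operator $X+2t(L-\lambda)$ compresses to the scalar $\langle Xf,f\rangle$, whose real part is $c$ and whose imaginary part is fixed by $f$. A Schur-complement argument shows that the cross terms vanish as $t\to\infty$. What survives is
\[
\frac{|\langle\Pi u_0,f\rangle|^2}{2\pi i\,(c+ i/(4|\lambda|)\cdot(\pm) - z)},
\]
which is exactly $\Pi$ of the soliton $\tfrac{4|\lambda|}{4\lambda^2(x-c)^2+1}$. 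Hence the weak limit is identified, and the equicontinuity argument then completes the proof.
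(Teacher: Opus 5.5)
Your overall architecture matches the paper: the explicit formula, the shift $z\mapsto z-2t\la$, splitting off the $\Ker(L_{u_0}-\la)$ component, identifying the soliton through $\langle f,Xf\rangle$, and upgrading pointwise convergence on $\C_+$ to $L^2([-R,R])$ convergence by equicontinuity. The final upgrade step is fine. The gap is in how you get uniformity in $t$ for the decaying part. You propose to approximate the \emph{potential} $u_0$ by Schwartz data and to show that $u_0\mapsto \Pi u(t,z-2t\la)$ is Lipschitz in $L^2$ uniformly in $t$. That is false. For the one-soliton $u=Q_{\la,0}$ one has $\Pi u(t,z-2t\la)=\frac{2i|\la|}{2|\la|z+i}$ for all $t$. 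For the $L^2$-close data $Q_{\la',0}(0,\cdot)$ with $\la'\to\la$, the same quantity tends to $0$ as $|t|\to\infty$. Your own observation that $\la$ can enter or leave the point spectrum under approximation is exactly the obstruction: the long-time limit depends discontinuously on $u_0$. The $L^2$ resolvent bound $\|(X-2t(L_{u_0}-\la)-z)^{-1}\|\le(\Im z)^{-1}$ does not rescue this either. The functional in the explicit formula (your pairing with $1$, i.e.\ $I_+$) is unbounded and not even closable on $L^2_+(\R)$.

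The missing idea is to keep $L_{u_0}$ fixed and approximate only the vector the resolvent acts on. What you need is a $t$-uniform bound linear in that vector: $|I_+((X-2tL_{u_0}-z)^{-1}h)|\lesssim(\Im z)^{-1/2}\|h\|_{L^2}$. The paper derives it from the generalized formula \eqref{GEFG}, $\frac{1}{2\pi i}I_+((X-2tL_{u_0}-z)^{-1}h)=[U(t)h](z)$, together with unitarity of $U(t)$. Your dissipativity idea also gives it if you keep the boundary term: $w=(X-2tL_{u_0}-z)^{-1}h$ satisfies $-\Im\langle w,h\rangle=\frac12|\widehat w(0)|^2+\Im z\,\|w\|^2$, whence $|\widehat w(0)|\le\|h\|_{L^2}/\sqrt{2\Im z}$. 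With this bound you need neither RAGE, nor a Schur complement, nor $\Pi u_0\in\D(X)$. For $v_0\in\Ker(L_{u_0}-\la)^\perp=\overline{\Range(L_{u_0}-\la)}$, write $v_0=(L_{u_0}-\la)h_\eps+r_\eps$ with $\widehat h_\eps\in C^\infty_c((0,\infty))$ and $\|r_\eps\|_{L^2}<\eps$. Then use $(L_{u_0}-\la)h=-\frac1{2t}[X-2t(L_{u_0}-\la)-z]h+\frac1{2t}(X-z)h$ and $I_+(h_\eps)=0$ to get $O_z(\eps)+O_{z,\eps}(|t|^{-1})$. The eigencomponent is removed exactly by writing $f=\frac{(X-z)f}{\langle f,Xf\rangle-z}-g$ with $g\perp f$, which reduces your cross terms to the same claim. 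This uses $f\in\D(X)$, which for $u_0\in L^2$ is a genuine lemma (Corollary~\ref{C: f in X}). Finally, the constants in your surviving term are wrong. By \eqref{Wu relation Hs}, $\Im\langle f,Xf\rangle=-\frac12|\widehat f(0)|^2=\frac1{2\la}$, and the numerator is $\langle f,\Pi u_0\rangle I_+(f)=2\pi$, not $|\langle\Pi u_0,f\rangle|^2$. With your values the limit is not $\Pi Q_{\la,c}$. Also, in this paper's conventions the operator is $X-2t(L_{u_0}-\la)-z$; your $X+2tL$ combined with $z\mapsto z-2t\la$ would give $L+\la$.
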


Although embedded eigenvalues can be ruled out in the $L^2(\R)$ setting (cf. Proposition~\ref{P:Wu_identity}), it is currently unknown whether embedded singular continuous spectrum can occur and how this might affect the long-time behaviour. Theorem~\ref{T:Intro Strong_loc_cv_L2} indicates that any such embedded singular continuous spectrum can only lead to structures that decay as $t\to \pm \infty$.

Let us now give a brief overview of the arguments used to prove Theorems~\ref{T:Intro AS} and \ref{T:Intro Strong_loc_cv_L2}, along with how this is reflected in the arrangement of the paper.

In Section~\ref{S:2} we review the key notations.  We also prove a commutator bound in Lemma~\ref{L:weights} that will be important for proving weighted estimates later.

Section~\ref{S:3} is divided into two subsections.  The first discusses the Lax operator $L_u$ for general $u\in L^2(\R)$; this includes detailing the Sobolev spaces adapted to this operator, the dependence of its resolvent on the potential, and its interrelations with the generator $X$ of Fourier translations.

Subsection~\ref{SS:3.2} discusses the Peter operator $P_u$ and the family of unitary flows $U(t)$ defined by $\partial_t U(t) = P_{u(t)} U(t)$.  As the construction of $U(t)$ has been omitted in the existing literature, it is tempting to underestimate it.  However, our Proposition~\ref{P:Unitary_Propagator} is actually intimately intertwined with the hard-won result that \eqref{BO} is globally well-posed in $L^2(\R)$!  This is because for our choice of Peter operator (which is not the most popular), the unitary flow actually encapsulates the \eqref{BO} flow: $u_+(t) = U(t)u_+(0)$.

Section~\ref{S:EFG} reviews the explicit formula \eqref{EFG} of G\'erard, as well as proving the extension \eqref{GEFG}.  The utility of such extensions in understanding long-time behavior was recently compellingly demonstrated in \cite{GGM26}.

Past experience has shown that (multi)solitons are intimately connected with eigenvalues $\lambda$ and eigenvectors $f$ of the Lax operator.  Thus, to understand perturbations of (multi)solitons, we must control how changes to the potential $u$ affect both $\lambda$ and $f$; see Proposition~\ref{P:Spectral_bounds}.  This is the first of three milestones to be attained in Section~\ref{S:profiles}.

The second milestone in Section~\ref{S:profiles} is understanding the long-time asymptotics along spacetime rays in the weak topology.  Working in this topology amounts to proving convergence not of $u(t,x)$ with $x\in \R$, but of its extension to the upper half-plane $\C_+$.  This step relies on an elaboration of the methods used in \cite{GGM26} to understand the solitonic component.  However, we are able to treat not only negative speeds, which one naturally associates with solitons and eigenvalues, but \emph{all} speeds.  This is important for fulfilling our goal of showing that there are no new coherent structures in this regime.  Of course, we also work under much fewer hypotheses than \cite{GGM26}.

The third component of Section~\ref{S:profiles} is upgrading from weak convergence to strong convergence, which we accomplish using equicontinuity. The climax of Section~\ref{S:profiles} is the proof of Theorem~\ref{T:Intro Strong_loc_cv_L2}.  

Theorem~\ref{T:Intro Strong_loc_cv_L2} describes the long-time asymptotics for arbitrary initial data $u(0)\in L^2(\R)$; it makes no assumption of proximity to any (multi)soliton.  It is only in Section~\ref{S:Asymptotic_Stability} that we combine this result with the spectral bounds of Proposition~\ref{P:Spectral_bounds} to prove 
Theorem~\ref{T:Intro AS}, or rather, the more complete Theorem~\ref{T:Main_Asymptotic}.

\subsection*{Acknowledgements} R.~B. was supported by an AMS-Simons Travel Grant.  R.~K. was supported by NSF grant DMS-2452346 and the project ANR-25-CFFS-0004 ``PhysMathEDPInteg'' of the France 2030 program. M.~V. was supported by NSF grant DMS-2348018.

\section{Notations and preliminaries}\label{S:2}

In this paper, we adopt the following convention for the Fourier transform:
\begin{align}\label{FT}
\widehat f(\xi) = \frac{1}{\sqrt{2\pi}} \int_\R e^{-i\xi x} f(x)\,dx  \quad \text{and}\quad f(x) = \frac{1}{\sqrt{2\pi}} \int_\R e^{i\xi x} \widehat f(\xi)\,d\xi.
\end{align}
This is unitary under the inner product 
\[
    \langle f, g \rangle =\int_{\R} \overline{f(x)} g(x)\, dx.
\]
Additionally, we have $\widehat{fg}(\xi) = (2\pi)^{-\frac12} \widehat f *\widehat g.$

We write $A\lesssim B$ to indicate that $A\leq c B$ for some constant $c$.  In the case that the constant $c$ depends on external parameters, this will be indicated with subscripts.  We will write $A \approx B$ to indicate that $A\lesssim B\lesssim A$.

Given Banach spaces $X_1$ and $X_2$, we write $\mathcal{B}(X_1, X_2)$ for the space of bounded linear operators mapping from $X_1$ into $X_2$, endowed with the operator norm.

The Hilbert transform $H$ is defined via the principal value integral
\begin{equation}\label{H def}
    Hf(x) = \pv \frac{1}{\pi} \int \frac{f(y)}{x-y}\,dy \qtq{or equivalently,} \widehat{Hf}(\xi) = -i \operatorname{sgn}(\xi)\widehat{f}(\xi).
\end{equation}

The Hardy space is defined via
$$
L^2_+(\mathbb{R}) := \{ f \in L^2(\mathbb{R}) :\, \supp(\mkern1mu\widehat{f}\, ) \subseteq [0,+\infty) \}.
$$
We denote the Cauchy--Szeg\H{o} projection onto this space by
$$
	\CS = \tfrac{1}{2}(1+iH):L^2(\R)\to L_+^2(\R).
$$
The analytic part of a function $v$ is denoted by $v_+ := \CS v$; by comparison, 
$$
\CS v f := \CS(vf).
$$

\begin{lemma}\label{L:weights}
The family of weight functions
$$
\omega_\eps (x)= \frac{x+i}{\eps x+i} \qtq{satisfy} \|\partial_x [\omega_\eps, \Pi u] h\|_{L^2} \lesssim \sqrt\eps \|u\|_{L^2} \|h\|_{L^2},
$$
uniformly for $0<\eps\leq 1$, $u\in L^2(\R)$, and $h\in L^2_+(\R)$.
\end{lemma}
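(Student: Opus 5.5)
The plan is to reduce the commutator to a rank-one operator by exploiting the explicit form of the weight. Partial fractions give
\[
\omega_\eps(x) = \frac{x+i}{\eps x+i} = \frac1\eps - \frac{i(1-\eps)}{\eps^2}\cdot\frac{1}{x+ia}, \qquad a:=\frac1\eps \geq 1 .
\]
The constant $1/\eps$ commutes with $\Pi u$, so it suffices to study $[g_a,\Pi u]h$ with $g_a(x)=(x+ia)^{-1}$ and then multiply by the prefactor $|1-\eps|\eps^{-2}\leq \eps^{-2}$. The key structural fact is that $g_a$ extends holomorphically to the upper half-plane, with its only pole at $-ia$.

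\emph{Step 1 (rank-one commutator).} For $h\in L^2_+$ we have $g_a\,\Pi(uh)$ on one side and $\Pi(g_a uh)$ on the other. Set $F=uh\in L^1(\R)$; we only know $F\in L^1$, not $L^2$, so $\Pi F$ should be understood on the Fourier side (or as a distribution). Split $F=F_++F_-$ with $F_-=(1-\Pi)F$, which extends holomorphically to the lower half-plane via the Cauchy integral
\[
F_-(z)=-\frac{1}{2\pi i}\int\frac{F(y)}{y-z}\,dy, \qquad \Im z<0 .
\]
Multiplying by $g_a$ preserves the positive-frequency part, so $\Pi(g_aF_+)=g_aF_+$. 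For the negative part, write
\[
\frac{F_-(x)}{x+ia}=\frac{F_-(x)-F_-(-ia)}{x+ia}+\frac{F_-(-ia)}{x+ia}.
\]
The first term is negative-frequency because the singularity at $-ia$ is removable. The second term is a multiple of $g_a$, which is positive-frequency. Hence
\[
[g_a,\Pi u]h = g_a\Pi F-\Pi(g_aF) = -F_-(-ia)\,g_a(x),
\]
a rank-one expression. I would justify this identity first for nice $u,h$ (Schwartz, with $\widehat h$ supported in $[0,\infty)$), where all manipulations are legitimate, and then extend by density using the final bound.

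\emph{Step 2 (estimates).} From the Cauchy integral, $|F_-(-ia)|\leq \frac{1}{2\pi a}\|F\|_{L^1}\leq \frac{1}{2\pi a}\|u\|_{L^2}\|h\|_{L^2}$, using Cauchy--Schwarz and $|y+ia|\geq a$. Also $\|\partial_x g_a\|_{L^2}=\|(x+ia)^{-2}\|_{L^2}\approx a^{-3/2}$. Combining,
\[
\|\partial_x[\omega_\eps,\Pi u]h\|_{L^2}\lesssim \eps^{-2}\cdot a^{-1}\cdot a^{-3/2}\,\|u\|_{L^2}\|h\|_{L^2}=\eps^{1/2}\|u\|_{L^2}\|h\|_{L^2},
\]
uniformly in $0<\eps\leq1$, which is the claim.

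\emph{Main obstacle.} The step needing the most care is Step 1 in the low-regularity setting. Since $uh$ is merely $L^1$, one must make sense of $\Pi(uh)$ and of the point evaluation $F_-(-ia)$, and verify the rank-one identity there; that is why I would establish it on a dense class first. The bookkeeping of powers of $\eps$ is then routine. It is essential to use the $L^1$ bound for $F$ rather than an $L^2$ bound, since $uh\notin L^2$ in general.
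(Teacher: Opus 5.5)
Your proof is correct and is essentially the paper's argument. Both identify the commutator as the same explicit rank-one operator, $[\omega_\eps,\Pi u]h=-\frac{1-\eps}{2\pi(\eps x+i)}\int_\R\frac{u(y)h(y)}{\eps y+i}\,dy$, and then bound it by $\eps\,\|(\eps x+i)^{-2}\|_{L^2}\|u\|_{L^2}\|h\|_{L^2}\approx\sqrt\eps\,\|u\|_{L^2}\|h\|_{L^2}$. Your partial-fraction and Hardy-splitting derivation with $a=1/\eps$ reproduces exactly this formula, which the paper obtains by direct computation from $[\omega_\eps,\Pi u]=\tfrac i2[\omega_\eps,H]u$; your density step makes rigorous the $L^1$-level manipulations that the paper leaves implicit.
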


\begin{proof}
As 
\begin{align*}
[\omega_\eps, \Pi u] = [\omega_\eps, \Pi]u + \Pi [\omega_\eps,u] = \bigl[ \omega_\eps, \tfrac{1+iH}2\bigr] u =  \tfrac i2 [ \omega_\eps,H] u,
\end{align*}
a direct computation yields
\begin{align*}
\bigl( [\omega_\eps, \Pi u] f\bigr)(x)= -\frac{(1-\eps)}{2\pi(\eps x+i)} \int_\R \frac{u(y) f(y)}{\eps y+i}\, dy.
\end{align*}
Thus,
$$
\|\partial_x [\omega_\eps, \Pi u] f\|_{L^2} \lesssim (1-\eps)\eps \| (\eps x+ i)^{-2}\|_{L^2} \|u\|_{L^2} \|f\|_{L^2}\lesssim \sqrt\eps\|u\|_{L^2} \|f\|_{L^2},
$$
uniformly for $0<\eps\leq 1$, $u\in L^2(\R)$, and $f\in L^2_+(\R)$.
\end{proof}

\section{The Lax pair operators and their properties}\label{S:3}

One facet of the complete integrability of the Benjamin--Ono equation is that it admits a Lax pair representation
\begin{equation}
\text{u solves \eqref{BO}} \quad\iff\quad \partial_t L_{u(t)} = [P_{u(t)}, L_{u(t)}];\label{Lax eq}
\end{equation}
see \cite{Bock1979,Nakamura1979}. Such pairs were introduced by Peter Lax in the KdV setting.

The unbounded selfadjoint Lax operator $L_u$ and anti-selfadjoint Peter operator $P_u$, both acting on $L^2_+(\mathbb{R})$, are defined via
\begin{equation}\label{Lax op}
    L_u f := -i\partial_x f - \Pi(uf) \quad \text{and} \quad P_u f := -i\partial_x^2 f - 2\partial_x \Pi(uf) + 2 u'_+ f.
\end{equation}
Our choice of Peter operator in this paper is that of \cite{MR3484397}.  As pointed out in \cite{Killip2024}, one of the virtues of this choice of Peter operator is that solutions to \eqref{BO} satisfy
\begin{align}\label{u P eqn}
 \frac{d}{dt}\Pi u(t) = P_{u(t)} \Pi u(t).
\end{align}

In \cite{Killip2024}, it was shown that for any $u\in H^s(\R)$ with $s>-\frac12$, the operator $L_u$ can be realized as a semibounded selfadjoint operator with form domain $H^{1/2}(\R)$, and that its essential spectrum is $\sigma_{\mathrm{ess}}(L_u) = [0, \infty)$.

The spectral type of this operator was further elucidated by the authors in \cite{BKV25}, where the following generalization of the Wu identities was established:

\begin{prop}[Wu identities, \cite{BKV25}]\label{P:Wu_identity}
Fix $s>-\frac12$, $u\in H^s(\R)$, and let $f$ be an eigenfunction of $L_u$ associated with an eigenvalue $\lambda$.  Then $\lambda<0$, $\widehat{f}$ is continuous on $[0,+\infty)$,
\begin{equation}\label{Wu relation Hs}
    |\langle u, f \rangle|^2 = -2\pi \lambda \|f\|_{L^2}^2,
    \ \ \ \sqrt{2\pi} \la \widehat{f}(0)+\langle   u,f \rangle = 0, \ \ \ \text{and} \ \ \  \lambda|\widehat f(0)|^2 = -\|f\|^2_{L^2}.
\end{equation} 
\end{prop}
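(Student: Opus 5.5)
Work on the Fourier side, where $L_u$ becomes an integral equation on the half-line. Write the eigenvalue equation $-if'-\Pi(uf)=\lambda f$, $f\in H^{1/2}_+$, and take Fourier transforms. Since $f\in L^2_+$, the equation reads
\[
(\xi-\lambda)\widehat f(\xi) = \widehat{\Pi(uf)}(\xi) = \frac{\mathbf 1_{\xi\ge0}}{\sqrt{2\pi}}\int \widehat u(\xi-\eta)\widehat f(\eta)\,d\eta .
\]
Because $u\in H^s$ with $s>-\frac12$ and $f\in H^{1/2}$, the product $uf$ lies in a negative Sobolev space. To put $\widehat{uf}$ on a firm footing, I would first argue by approximation: take $u_n\in L^2\cap$ Schwartz with $u_n\to u$ in $H^s$. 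By resolvent convergence, in the form sense on $H^{1/2}$ as in \cite{Killip2024}, the eigenpairs $(\lambda,f)$ are limits of eigenpairs of $L_{u_n}$, or of approximate eigenvectors. Then prove the identities for nice $u$ and pass to the limit. The delicate part is the continuity of $\widehat f(0)$ under the limit. Alternatively one can work directly: $\langle u,f\rangle$ makes sense as the $H^{s}$–$H^{-s}$ pairing once we know $f\in H^{1}$ or so, which follows by bootstrapping the eigen-equation.

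\textbf{Step 1 ($\lambda<0$ and continuity of $\widehat f$).} For $\xi\ge0$ and $\lambda\notin[0,\infty)$, $\widehat f(\xi)=(\xi-\lambda)^{-1}\widehat{\Pi(uf)}(\xi)$. The right side $\int\widehat u(\xi-\eta)\widehat f(\eta)\,d\eta$ is continuous in $\xi$ once $\widehat u\,\langle\cdot\rangle^{s}\in L^2$ and $\widehat f\langle\cdot\rangle^{-s}\in L^2$, by continuity of translation in $L^2$. Hence $\widehat f$ is continuous on $[0,\infty)$ and $\widehat f(0)$ is meaningful. To exclude $\lambda\ge0$ (embedded eigenvalues), and to get the norm identity, I would use the commutator with $X$, the generator of Fourier translations, i.e.\ multiplication by $x$ restricted appropriately. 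The standard computation $[X,L_u]=i$ modulo a rank-one term gives
\[
\lambda\langle f,Xf\rangle-\langle f,XL_uf\rangle \;\rightsquigarrow\; \|f\|^2 = -\lambda|\widehat f(0)|^2 .
\]
This is a virial/Pohozaev identity obtained by pairing the Fourier equation with $\partial_\xi$ and integrating by parts on $[0,\infty)$; the boundary term at $\xi=0$ produces $|\widehat f(0)|^2$. Concretely, $\frac{d}{d\xi}$ of the equation, paired with $\overline{\widehat f}$, gives $\|f\|^2$ plus the boundary term $\lambda|\widehat f(0)|^2$, because the potential term is symmetric. If $\lambda\ge0$, this identity forces $\|f\|=0$ (for $\lambda=0$ directly; for $\lambda>0$ the sign is wrong), so $\lambda<0$.

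\textbf{Step 2 (remaining identities).} Evaluate the Fourier equation at $\xi=0$:
\[
-\lambda\widehat f(0)=\tfrac1{\sqrt{2\pi}}\widehat{uf}(0)=\tfrac{1}{2\pi}\int uf\,dx .
\]
Since $u$ is real, this is $\frac1{2\pi}\langle u,f\rangle$ up to the conventions, giving $\sqrt{2\pi}\lambda\widehat f(0)+\langle u,f\rangle=0$ after checking the constants. Combining this with $\lambda|\widehat f(0)|^2=-\|f\|^2$ yields $|\langle u,f\rangle|^2=2\pi\lambda^2|\widehat f(0)|^2=-2\pi\lambda\|f\|^2$.

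\textbf{Main obstacle.} The hard part is justifying the integration by parts in $\xi$ (the virial identity) at low regularity. This needs $\xi\widehat f'$-type control, i.e.\ $xf$-weighted bounds, which are not a priori available. I would handle it using the weights $\omega_\eps$ of Lemma~\ref{L:weights}. Compute $\langle \omega_\eps f, (L_u-\lambda)f\rangle=0$ and expand the commutator $[\omega_\eps,L_u]=-i\omega_\eps'-[\omega_\eps,\Pi u]$. The lemma controls the second piece by $\sqrt\eps$, while the first produces $\|f\|^2$ and the boundary contribution $|\widehat f(0)|^2$ as $\eps\to0$, the latter arising from $\omega_\eps\to x+i$ failing at infinity. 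Passing $\eps\to0$ should yield the identity without assuming decay of $f$.
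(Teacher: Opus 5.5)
\paragraph{The threshold case $\lambda=0$ is not handled.} Your argument is circular there. Step~1 gives continuity of $\widehat f$ at $\xi=0$ only for $\lambda\notin[0,\infty)$. You then invoke $\|f\|^2=-\lambda|\widehat f(0)|^2$ to rule out $\lambda\ge0$, but that identity presupposes that $\widehat f(0)$ (really $I_+(f)$) exists.

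For $\lambda>0$ this is harmless: $\xi-\lambda\neq0$ near $\xi=0$, so $\widehat f=\widehat{uf}/(\xi-\lambda)$ is still continuous there. For $\lambda=0$, however, the equation only gives $\xi\widehat f(\xi)=\widehat{uf}(\xi)$, whence $\widehat{uf}(0)=0$, i.e.\ $\langle u,f\rangle=0$. With no decay information on $uf$ there is no rate at which $\widehat{uf}$ vanishes at the origin. So $\widehat f=\widehat{uf}/\xi$ may be unbounded near $0$, and $I_+(f)$ need not exist.

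In the regularized identity discussed below, the boundary term is then a product of two factors. One is $\int uf\,(\eps y+i)^{-1}\,dy\to-i\langle u,f\rangle=0$. The other is $\langle f,(\eps x+i)^{-1}\rangle$, which a priori is only $o(\eps^{-1/2})$. Reaching the contradiction $f=0$ requires this product to tend to zero, and nothing in your proposal controls it. So ``for $\lambda=0$ directly'' is a $0\cdot\infty$ argument. According to the paper, this is exactly the case \cite{BKV25} had to add to Wu's and Sun's results (``addressing zero eigenvalues'').

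Your approximation route cannot supply it either. An eigenvalue at the edge of $\sigma_{\mathrm{ess}}(L_u)$ need not be a limit of eigenvalues of $L_{u_n}$, and even if it were, $\lambda_n|\widehat f_n(0)|^2$ is the same indeterminate form.

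\paragraph{The virial-identity mechanism is misassigned.} Lemma~\ref{L:weights} bounds $\p_x[\omega_\eps,\CS u]$, not $[\omega_\eps,\CS u]$. By the computation in its proof, $[\omega_\eps,\CS u]$ is the rank-one operator $h\mapsto-\frac{1-\eps}{2\pi(\eps x+i)}\int\frac{u h}{\eps y+i}\,dy$. This is not small; note $\|(\eps x+i)^{-1}\|_{L^2}=(\pi/\eps)^{1/2}$.

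Since $(\eps x+i)^{-1}=-i\chi_{1/\eps}$, one gets $\langle f,[\omega_\eps,\CS u]f\rangle\to\frac1{2\pi}\overline{I_+(f)}\,\langle u,f\rangle$ whenever $I_+(f)$ exists. Meanwhile $[\omega_\eps,-i\p_x]=i\omega_\eps'=-(1-\eps)(\eps x+i)^{-2}$ gives $\langle f,i\omega_\eps'f\rangle\to\|f\|^2$ by dominated convergence, with no boundary term at all. So the boundary contribution sits entirely in the piece you intend to discard. Dropping it as $O(\sqrt\eps)$ would force $f=0$ for every eigenfunction, including that of a single soliton.

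The correct consequence of $\langle f,[\omega_\eps,L_u]f\rangle=0$ is $\|f\|^2=\frac1{2\pi}\overline{I_+(f)}\,\langle u,f\rangle$. The starting identity is legitimate, since $\omega_\eps$ is bounded and holomorphic on $\C_+$ with bounded derivative, so $\omega_\eps f\in H^{1/2}_+$. The resulting identity is the counterpart of $[X,\CS u]=\frac{i}{2\pi}u_+\otimes I_+$ from Lemma~\ref{L:Com}.

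Combine it with $I_+(f)=\sqrt{2\pi}\widehat f(0)$ and Step~2, whose constants should read $-\lambda\widehat f(0)=\widehat{uf}(0)=\frac1{\sqrt{2\pi}}\langle u,f\rangle$. This gives all three identities when $\lambda<0$ and excludes $\lambda>0$. With this repair your scheme is sound away from $\lambda=0$, but the threshold case still needs a genuinely new ingredient.
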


The first identity in \eqref{Wu relation Hs} was originally derived by Wu \cite{MR3484397} for $u\in L^\infty(\R)$ with $\langle x\rangle u\in L^2(\R)$. Subsequently, Sun \cite{Sun2021} removed the hypothesis $u \in L^\infty(\R)$.

Wu already observed that the first identity in \eqref{Wu relation Hs} forbade positive eigenvalues and guaranteed that negative eigenvalues are simple. Proposition~\ref{P:Wu_identity} extends this by addressing zero eigenvalues and by weakening the hypotheses on $u$.  Another direct application of the first identity is the trace bound
\begin{equation} \label{E:trace_ineq}
    \|u\|_{L^2}^2 \geq \sum_{\la_j\in\sigma_d(L_u)} |\langle u, f_j \rangle|^2  = 2\pi \sum_{\la_j\in\sigma_d(L_u)} |\lambda_j| ,
\end{equation}
where $f_j$ denote the $L^2$-normalized eigenfunctions of $L_u$ associated with the eigenvalues $\lambda_j$.

In the case of a multisoliton solution to \eqref{BO}, we have equality in \eqref{E:trace_ineq}, namely,
\begin{align}\label{soliton mass}
\bigl\|Q_{\Lambda, \vec c}\bigr\|_{L^2}^2 = 2\pi \sum_{\la\in\Lambda} |\lambda| \qtq{for any} \vec c\in \R^N.
\end{align}
This was proved by Sun, who gave the following thorough characterization:

\begin{theorem} [Spectral characterization of multisolitons, \cite{Sun2021}] \label{T: Sun}
Fix \( N\geq 1 \) and a set $\Lambda$ of negative parameters $\la_1<\cdots<\la_N$. A function $ u$ belongs to the set  $\{Q_{\Lambda, \vec c} :\, \vec c\in \R^N\}$ if and only if it satisfies the following three conditions:
\begin{enumerate}
    \item \( \langle x \rangle u \in L^2(\mathbb{R}) \).
    \item $\la_1, \ldots, \la_N$ are the only negative eigenvalues of the Lax operator \( L_u \). 
    \item \( \Pi u \) belongs to the span of the corresponding \( N \) eigenfunctions.
\end{enumerate}
\end{theorem}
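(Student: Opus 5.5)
The plan is to prove the two implications separately, using the Wu identities of Proposition~\ref{P:Wu_identity} and the trace bound \eqref{E:trace_ineq} in both. The converse implication rests on one further ingredient: the position operator $X$ (multiplication by $x$, which on the Fourier side of $L^2_+$ is $i\partial_\xi$ with its boundary condition at $\xi=0$). The hypothesis $\langle x\rangle u\in L^2$ is exactly what makes $X$ act on $\Pi u$ and on the eigenfunctions.

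\emph{Forward direction.} Start from $u=Q_{\Lambda,\vec c}$ and set $D(x)=\det M(x)$, a polynomial of degree $N$.
\begin{itemize}
\item First check that $D$ has no zeros on $\overline{\C_+}$. This should follow from $\Re\langle v, M(x) v\rangle >0$ for $\Im x\ge0$, since $-\tfrac1{2\la_j}>0$ and the off-diagonal part $-(\la_j-\la_k)^{-1}$ is antisymmetric.
\item It follows that $\Pi Q = i\,\partial_x \ln D$ is a sum of simple fractions $i/(x-z_k)$ with $\Im z_k<0$. In particular $Q=O(x^{-2})$ at infinity, which gives condition (1).
\item Next write down explicit candidate eigenfunctions $f_j(x)=\sum_k (M(x)^{-1})_{jk}\,\alpha_k$ for a suitable vector $\alpha$, say $\alpha=(1,\dots,1)$. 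These are rational and lie in $L^2_+$.
\item Verify $L_Q f_j=\la_j f_j$ by direct algebra with the identity $\partial_x M=-i\,\Id$. The commutator relation $\mathrm{diag}(\la)M-M\,\mathrm{diag}(\la)=$ rank-one then reproduces the term $\Pi(Qf_j)$.
\item This algebra also shows $\Pi Q=\sum_j \beta_j f_j$ for explicit $\beta_j$, which is condition (3).
\item Finally compute $\|Q\|_{L^2}^2=2\pi\sum_j|\la_j|$ directly, for instance by residues. Since every eigenvalue carries the weight $2\pi|\la|$ in \eqref{E:trace_ineq}, $\la_1,\dots,\la_N$ must be the only negative eigenvalues, which is condition (2).
\end{itemize}

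\emph{Converse direction.} Let $f_1,\dots,f_N$ be the normalized eigenfunctions and write $\Pi u=\sum_j \beta_j f_j$.
\begin{itemize}
\item Using $\langle x\rangle u\in L^2$, show that each $f_j$ lies in the domain of $X$ and derive the commutator identity
\[
[X,L_u]f = i f - \tfrac{i}{2\pi}\langle u,f\rangle\,\Pi u ,
\]
where the second term comes from the boundary value $\widehat f(0)$.
\item Pairing this identity with $f_k$ and applying the Wu identities gives, for $j\ne k$,
\[
(\la_j-\la_k)\langle f_k,Xf_j\rangle = \text{an explicit multiple of } \overline{\langle u,f_k\rangle}\langle u,f_j\rangle .
\]
After fixing phases so that $\langle u,f_j\rangle=\sqrt{2\pi|\la_j|}$, this should match $M$'s off-diagonal entries up to diagonal conjugation.
\item The diagonal entries define $c_j:=\Re\langle f_j,Xf_j\rangle$. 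Their imaginary parts are forced by the same identity to equal $-\tfrac1{2\la_j}$, consistent with the diagonal of $M$.
\item By the identity, $X$ maps $\mathrm{span}\{f_j\}$ into itself, because $\Pi u$ lies in this span.
\item For $\Im z>0$ use a Cauchy formula of the form $\Pi u(z)=\tfrac1{2\pi i}\langle \mathbf 1,(X-z)^{-1}\Pi u\rangle$, interpreted through $\widehat{\cdot}(0)$. Restricted to the span, this is a finite-dimensional resolvent of an $N\times N$ matrix unitarily similar to $i M$ built from $c_j$.
\item Cramer's rule then gives $\Pi u= i\,\partial_x\ln\det M$, hence $u=2\Re\Pi u=Q_{\Lambda,\vec c}$.
\end{itemize}

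The main obstacle is the converse. One must justify rigorously that $X$ is defined on the eigenfunctions under only $\langle x\rangle u\in L^2$, and identify precisely the boundary term in $[X,L_u]$ via $\widehat f(0)$ and the Wu identities. The plan for this step is to regularize with the weights $\omega_\eps$ of Lemma~\ref{L:weights}: compute $[\omega_\eps,L_u]$ and let $\eps\to0$. The commutator bound of Lemma~\ref{L:weights} controls the error terms.
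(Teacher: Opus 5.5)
This statement is only quoted from \cite{Sun2021}; the paper gives no proof of it. Your converse follows Sun's strategy, and all of its ingredients are in fact available here: Corollary~\ref{C: f in X}, Lemma~\ref{L:Com}, the computation of $\Im\langle f,Xf\rangle$ in the proof of Proposition~\ref{P:Asym_profile}, and \eqref{CIF}. The pole-location and decay steps of your forward direction are also fine. However, several individual steps fail as written.

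\textbf{Forward direction: the eigenfunction ansatz is transposed.} Take $f_j=\sum_k (M^{-1})_{jk}$, your choice $\alpha=\mathbf 1$. Then the eigenvalue equation already fails for $N=2$, for example at $(\la_1,\la_2)=(-1,-\tfrac12)$, $\vec c=0$. For $N\ge 3$ no constant $\alpha$ works: one checks that $M^{-1}\alpha=\Gamma w$, with $\Gamma$ diagonal and invertible, forces $(\gamma_j+\gamma_k)M_{jk}=0$ for all $j\ne k$.

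The eigenfunctions are the column sums $w:=(M^T)^{-1}\mathbf 1$, i.e.\ $w_j=\sum_k (M^{-1})_{kj}$. This is exactly what your converse computation produces (with your phase normalization): $f_j=(2\pi|\la_j|)^{-1/2}w_j$. With this ansatz, $\partial_x M=-i\Id$ and $[\operatorname{diag}(\la),M]=\Id-\mathbf 1\mathbf 1^T$ (identity minus rank one, not rank one) give three facts:
\begin{itemize}
\item $-iw'=(M^T)^{-1}w$;
\item $\Pi Q=\tr M^{-1}=\mathbf 1^TM^{-1}\mathbf 1=\sum_j w_j$, which is (3);
\item $(M^T)^{-1}w-\operatorname{diag}(\la)w=(\Pi Q)\,w-(M^T)^{-1}\vec\la$, where $\vec\la=(\la_1,\dots,\la_N)^T$.
\end{itemize}
The genuinely nontrivial term is $\Pi(\overline{\Pi Q}\,w)$. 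It needs the reflection identity $\overline{M(x)}=-M(x)^T-\operatorname{diag}(\la)^{-1}$ for $x\in\R$. Combined with the same commutator relation for $\overline M$, this gives $\overline{\Pi Q}\,w+(M^T)^{-1}\vec\la=-\operatorname{diag}(\la)\,\overline{M}^{-1}\mathbf 1$. The right-hand side has entries that are complex conjugates of $L^2_+$ functions, so $\Pi(\overline{\Pi Q}\,w)=-(M^T)^{-1}\vec\la$, and hence $L_Qw_j=\la_jw_j$.

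For (2), drop the norm count. Under \eqref{1-soliton} a residue computation gives $\|Q_{\la,c}\|_{L^2}^2=4\pi|\la|$, so the quantity that saturates Bessel's inequality is $\|\Pi Q\|_{L^2}^2=2\pi\sum_j|\la_j|$. More simply, once (3) holds, an eigenfunction $g$ for any other eigenvalue is orthogonal to $\Pi Q$. Then $\langle Q,g\rangle=0$, contradicting the first Wu identity.

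\textbf{Converse: $X$ is not multiplication by $x$.} By the third Wu identity $\widehat f_j(0)\neq0$, so $xf_j\notin L^2$ (think of $(x+ia)^{-1}$). The operator you need is the maximal one, $\widehat{Xh}=i\partial_\xi\widehat h$ on $H^1([0,\infty))$ with no boundary condition; equivalently $Xh=xh-\tfrac{i}{2\pi}I_+(h)$. Hence the $\omega_\eps$-regularization you plan for the ``main obstacle'' cannot work as stated: it only shows $\|\omega_\eps f_j\|_{L^2}\to\infty$. There are two ways around this:
\begin{itemize}
\item Use Corollary~\ref{C: f in X}, which holds for every $u\in L^2$. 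So (1) is not what makes $X$ act on the eigenfunctions.
\item Under (1), argue directly from $\widehat f_j=\widehat{uf_j}/(\xi-\la_j)$ on $[0,\infty)$ and $\partial_\xi\widehat{uf_j}=-i\widehat{xuf_j}\in L^2$.
\end{itemize}

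Relatedly, two constants need correcting.
\begin{itemize}
\item The rank-one term in the commutator is $-\tfrac{i}{2\pi}I_+(f)\,\Pi u$, with $I_+(f_j)=-\langle u,f_j\rangle/\la_j$ by Wu.
\item $\Im\langle f_j,Xf_j\rangle=-\tfrac12|\widehat f_j(0)|^2=\tfrac1{2\la_j}$, not $-\tfrac1{2\la_j}$. It comes from the non-symmetry of $X$, not from the commutator, whose diagonal entries only return $I_+(f_j)\overline{\langle u,f_j\rangle}=2\pi$.
\end{itemize}

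Finally, the last step is not Cramer's rule. Let $B$ be the matrix of $X$ on the span, $a_j=I_+(f_j)$ and $\beta_k=\langle f_k,\Pi u\rangle$. Then \eqref{CIF} gives $\Pi u(z)=-\tfrac{1}{2\pi i}\,a^T(z-B)^{-1}\beta$. To reach $i\partial_z\ln\det(z-B)$ you need the trace identity $a^T(z-B)^{-1}\beta=2\pi\tr(z-B)^{-1}$. It follows from $[B,\operatorname{diag}(\la)]=i\Id-\tfrac{i}{2\pi}\beta a^T$ together with $\tr\bigl((z-B)^{-1}[B,\operatorname{diag}(\la)]\bigr)=0$. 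The matrix $z-B$ is then conjugate to $iM(z)$ via $\operatorname{diag}(|\la_j|^{1/2})$, not unitarily.
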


\subsection{The Lax operator and properties of its eigenfunctions}\label{SS:3.1}

In this paper, we will need additional spectral information about the Lax operator and its eigenfunctions in the case $u\in L^2(\R)$.

The following theorem contains several key results we will rely on in our analysis, such as the identification of Sobolev norms adapted to the Lax operator and Lipschitz bounds satisfied by its resolvent.

\begin{theorem}[Lax operator]\label{T:Lu-Intro}
For any $u \in L^2(\mathbb{R})$, the Lax operator $L_u$ with domain $\mathcal{D}(L_u) = H^1_+(\mathbb{R})$ is selfadjoint and bounded from below, satisfying
\begin{equation}\label{L below}
L_u \geq -\tfrac{1}{2\pi} \|u\|_{L^2}^2 .
\end{equation}
Its essential spectrum is $\sigma_{\mathrm{ess}}(L_u) = [0, \infty)$.

Moreover, for all $\kappa \geq \kappa_0:= \frac2{\pi} \|u\|_{L^2}^2$, $\sigma \in [-1, 1]$, and $f\in H_+^{\sigma}(\R),$ we have
\begin{equation}
    \label{E:Equiv_norm}
    \| (L_u + \kappa)^\sigma f \|_{L^2} \approx \| (L_0 + \kappa)^\sigma f \|_{L^2}.
\end{equation}

Finally, for any $z\notin \sigma(L_u)$ and $v\in L^2(\R)$ such that
\begin{align}\label{close}
\|u-v\|_{L^2}\leq c(\|u\|_{L^2}) \frac{\operatorname{dist}\big(z, \sigma(L_u)\big)}{1+|z|}
\end{align}
for a sufficiently small constant $c(\|u\|_{L^2})$, we have that $z\notin \sigma(L_v)$,
\begin{align}
\big\|(L_v - z)^{-1}\big\|_{L^2_+\to H^1_+} &\lesssim_{\|u\|_2} \frac{1+|z|}{\operatorname{dist}\big(z, \sigma(L_u)\big)},\label{rez v}\\
\big\|(L_u - z)^{-1} - (L_v - z)^{-1}\big\|_{L^2_+\to H^1_+} &\lesssim_{\|u\|_2} \frac{(1+|z|)^2}{\operatorname{dist}^2\big(z, \sigma(L_u)\big)}\|u - v\|_{L^2}.\label{E:Lipschitz z}
\end{align}
\end{theorem}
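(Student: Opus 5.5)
The plan is to treat the multiplication part $f\mapsto \Pi(uf)$ as a form-bounded perturbation of $L_0=-i\partial_x$ on $L^2_+$, with relative bound zero. The basic estimate is the following: for $f\in H^{1/2}_+$ we have $|\langle f,\Pi(uf)\rangle| = |\int u|f|^2| \le \|u\|_{L^2}\|f\|_{L^4}^2$. On $L^2_+$ the Fourier support lies in $[0,\infty)$, so a frequency-split Gagliardo--Nirenberg argument gives
\[
\|f\|_{L^4}^2\le \tfrac{1}{\sqrt{\pi}}\|f\|_{L^2}\,\|(L_0)^{1/2}f\|_{L^2}
\]
up to the sharp constant. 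Young's inequality then yields $|\langle f,\Pi u f\rangle|\le \tfrac12\langle L_0 f,f\rangle+\tfrac{1}{2\pi}\|u\|_{L^2}^2\|f\|^2$. This gives \eqref{L below} and, via the KLMN theorem, a selfadjoint realization with form domain $H^{1/2}_+$.

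To identify the operator domain as $H^1_+$, I would show that $\Pi u$ is $L_0$-bounded with relative bound $0$. For $f\in H^1_+$, $\|uf\|_{L^2}\le \|u\|_{L^2}\|f\|_{L^\infty}$ and $\|f\|_{L^\infty}^2\lesssim \|f\|\,\|f'\|$, so $\|\Pi(uf)\|\le \eta\|f'\|+C_\eta\|u\|^2\|f\|$. Kato--Rellich then gives selfadjointness on $H^1_+$. For the essential spectrum I would approximate $u$ in $L^2$ by Schwartz functions $u_n$. The resolvent difference $(L_u+\kappa)^{-1}-(L_{u_n}+\kappa)^{-1}$ tends to $0$ in norm by the Lipschitz estimate below, and $\Pi u_n (L_0+\kappa)^{-1}$ is compact (a decaying multiplier composed with a smoothing operator, hence Hilbert--Schmidt). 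Consequently $(L_u+\kappa)^{-1}-(L_0+\kappa)^{-1}$ is a norm limit of compact operators, and Weyl's theorem gives $\sigma_{\mathrm{ess}}(L_u)=\sigma(L_0)=[0,\infty)$.

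For \eqref{E:Equiv_norm}, the endpoint $\sigma=\pm1/2$ follows from the form bound. For $\kappa\ge\kappa_0$ it gives $\tfrac12\langle (L_0+\kappa)f,f\rangle\le\langle (L_u+\kappa)f,f\rangle\le \tfrac32\langle(L_0+\kappa)f,f\rangle$; taking $\kappa_0=\tfrac2\pi\|u\|^2$ leaves room in the constants. For $\sigma=1$ I would show that $A:=\Pi u(L_0+\kappa)^{-1}$ has operator norm at most $1/2$ on $L^2_+$ when $\kappa\ge\kappa_0$. This uses $\|uf\|\le\|u\|\,\|f\|_\infty$ together with the bound
\[
\|(L_0+\kappa)^{-1}g\|_{L^\infty}\le \|\widehat g\|_{L^2}\Bigl(\tfrac1{2\pi}\int_0^\infty(\xi+\kappa)^{-2}d\xi\Bigr)^{1/2}=(2\pi\kappa)^{-1/2}\|g\|.
\]
Then $L_u+\kappa=(1-A)(L_0+\kappa)$ with $1-A$ invertible, which gives the two-sided bound. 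The case $\sigma=-1$ follows by duality and $\sigma=0$ is trivial. The intermediate values I would obtain by interpolation. On the $L_u$ side this is the Heinz--Löwner inequality, since operator monotonicity of $t\mapsto t^\sigma$ for $\sigma\in[0,1]$ upgrades $\|(L_u+\kappa)f\|\approx\|(L_0+\kappa)f\|$, that is $(L_u+\kappa)^2\approx(L_0+\kappa)^2$, to all $\sigma\in[0,1]$. Negative $\sigma$ then follows by duality.

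For the resolvent statements I would use the resolvent identity. Write $L_v-z=(L_u-z)\bigl[1+(L_u-z)^{-1}\Pi(u-v)\bigr]$. I need $\|(L_u-z)^{-1}\Pi(u-v)\|_{H^1_+\to H^1_+}<1/2$. Using \eqref{E:Equiv_norm} with $\sigma=1$, I can bound
\[
\|(L_u-z)^{-1}\|_{L^2\to H^1}\lesssim \|(L_u+\kappa_0)(L_u-z)^{-1}\|\le \sup_{\mu\in\sigma(L_u)}\frac{|\mu+\kappa_0|}{|\mu-z|}\lesssim\frac{1+|z|}{\dist(z,\sigma(L_u))},
\]
and also $\|\Pi((u-v)f)\|_{L^2}\le\|u-v\|\,\|f\|_{H^1}$. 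Under \eqref{close} a Neumann series converges, giving invertibility and \eqref{rez v}. The second resolvent identity $R_u-R_v=R_u\Pi(v-u)R_v$ then gives \eqref{E:Lipschitz z}.

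The main obstacle is keeping all constants explicit enough that the stated threshold $\kappa_0=\frac2\pi\|u\|^2$ actually works. This means getting the sharp $L^\infty$ and $L^4$ Hardy-space bounds and carefully handling the interpolation for non-integer $\sigma$ (via Heinz) and for negative $\sigma$ (via duality).
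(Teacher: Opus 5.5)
Your argument is correct, and its backbone matches the paper's proof. Both use the bound $\|\Pi u(L_0+\kappa)^{-1}\|_{L^2_+\to L^2_+}\le(2\pi\kappa)^{-1/2}\|u\|_{L^2}$, Kato--Rellich on $H^1_+$, the factorization $L_u+\kappa=(1-A)(L_0+\kappa)$ for $\sigma=1$, and duality for $\sigma<0$. Both get \eqref{rez v}--\eqref{E:Lipschitz z} from a Neumann series plus the second resolvent identity. The paper factors on $L^2_+$ rather than $H^1_+$, which is immaterial; note also that the identity reads $R_u-R_v=R_u\Pi(u-v)R_v$, not $\Pi(v-u)$.

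The routes differ in three places:
\begin{itemize}
\item The paper obtains \eqref{L below} only after establishing $\sigma_{\mathrm{ess}}(L_u)=[0,\infty)$, so that negative spectrum consists of eigenvalues, which the Wu identity then bounds. You instead use a form bound.
\item The paper cites relative compactness from the literature; you prove it yourself.
\item You replace complex interpolation by L\"owner--Heinz, which makes the constants for $0<\sigma<1$ explicit.
\end{itemize}

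Your route buys independence from the Wu identity. The price is that your form bound needs the sharp inequality $\|f\|_{L^4}^4\le\frac1\pi\|f\|_{L^2}^2\langle L_0f,f\rangle$ on $L^2_+$. A frequency-split Gagliardo--Nirenberg argument will not deliver that constant. Cauchy--Schwarz over $[0,\xi]$ does:
\[
\|\widehat f*\widehat f\|_{L^2}^2\le\int_0^\infty \xi\int_0^\xi|\widehat f(\eta)|^2|\widehat f(\xi-\eta)|^2\,d\eta\,d\xi=2\|f\|_{L^2}^2\langle L_0f,f\rangle,
\]
combined with $\|f\|_{L^4}^4=\frac1{2\pi}\|\widehat f*\widehat f\|_{L^2}^2$. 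With this constant, optimizing Young's inequality (rather than splitting with weight $\frac12$) gives the stronger bound $L_u\ge-\frac1{4\pi}\|u\|_{L^2}^2$. This is exactly the value attained by the soliton, since $\|Q_{\lambda,c}\|_{L^2}^2=4\pi|\lambda|$.

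Two simplifications are available.
\begin{itemize}
\item \eqref{L below} needs no $L^4$ bound at all. Your $\sigma=1$ estimate gives $\|A\|<1$ for every $\kappa>\frac1{2\pi}\|u\|_{L^2}^2$. Hence $-\kappa\notin\sigma(L_u)$ for all such $\kappa$, which by selfadjointness is precisely \eqref{L below}.
\item The Schwartz approximation in the essential-spectrum step is unnecessary. The operator $\Pi u(L_0+\kappa)^{-1}$ is already Hilbert--Schmidt for $u\in L^2$, with Hilbert--Schmidt norm at most $(2\pi)^{-1/2}\|u\|_{L^2}\|(\xi+\kappa)^{-1}\|_{L^2(0,\infty)}$. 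So the resolvent difference is compact directly, and Weyl's theorem applies.
\end{itemize}
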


\begin{proof}
Much can be understood about $L_u$ by treating it as a perturbation of $L_0$:
\begin{equation}\label{E:L[u]=L[0]+}
    (L_u + \kappa)f = (L_0 + \kappa)f - \Pi(uf).
\end{equation}
The operator $L_0 + \kappa$ is invertible on the Hardy space $L^2_+(\R)$ whenever $\kappa > 0$. For $h \in L^2_+(\R)$,
the H\"older inequality yields
\begin{align}\label{E: PIu(L0+k)}
    \|\Pi u(L_0+\kappa)^{-1}h\|_{L^2} \leq \|u\|_{L^2} \|(L_0+\kappa)^{-1}h\|_{L^\infty}
    &\leq \|u\|_{L^2} \tfrac{1}{\sqrt{2\pi}}\|\widehat{h}\|_{L^2} \bigl\|\tfrac{1}{\xi+\kappa}\bigr\|_{L^2}  \notag\\
    &\leq \tfrac{1}{\sqrt{2\pi}} \kappa^{-\frac12} \|u\|_{L^2} \|h\|_{L^2}.
\end{align}
In this way, we deduce that
\begin{equation}\label{KRinput}
    \| \Pi(uf) \|_{L^2} \leq \tfrac{1}{2} \|(L_0+\kappa)f\|_{L^2} \qtq{for all} \kappa\geq \kappa_0:=\tfrac2{\pi}\|u\|_{L^2}^2.
\end{equation}
By the Kato--Rellich Theorem, this guarantees that $L_u$ is selfadjoint with domain $H^1_+(\R)$.  Combining \eqref{KRinput} with \eqref{E:L[u]=L[0]+}, we also deduce 
\begin{equation*}
   \tfrac12 \|(L_0+\kappa)f\|_{L^2}\leq  \| (L_u + \kappa)f \|_{L^2}\leq \tfrac{3}{2} \|(L_0+\kappa)f\|_{L^2} \qtq{for all} \kappa\geq \kappa_0,
\end{equation*}
which proves \eqref{E:Equiv_norm} in the case $\sigma=1$.  As the case $\sigma = 0$ is trivial, the intermediate cases $\sigma \in (0, 1)$ follow by complex interpolation.  For negative powers $\sigma \in [-1, 0]$, the equivalence \eqref{E:Equiv_norm} may then be deduced from the standard duality arguments across the $L^2(\R)$ pairing, utilizing the selfadjointness of the operators involved. 

One can show that $L_u$ is a relatively compact perturbation of $L_0$ not only for $u\in L^2(\R)$, but even under weaker hypotheses; see \cite{Killip2024, Sun2021}.  Correspondingly, Weyl's Theorem guarantees that $\sigma_{\mathrm{ess}}(L_u) = [0, \infty)$.  In light of this, any negative spectrum must take the form of eigenvalues and so the lower bound \eqref{L below} follows from the first Wu identity \eqref{Wu relation Hs}.  Equality holds in the case of \eqref{1-soliton}, so this bound is sharp.
      
It remains to consider claims \eqref{rez v} and \eqref{E:Lipschitz z}.  For $z \notin \sigma(L_u)$, the spectral theorem for selfadjoint operators yields
\begin{align}\label{rez bdd}
\|(L_u - z)^{-1}\|_{L^2_+ \to L^2_+} \leq \frac{1}{\operatorname{dist}\big(z, \sigma(L_u) \big)}.
\end{align}
Using the resolvent identity to write
\begin{equation}\label{Rkz}
(L_u - z)^{-1}= (L_u +\kappa_0)^{-1} + (z+\kappa_0)(L_u +\kappa_0)^{-1}(L_u - z)^{-1}
\end{equation}
and employing \eqref{E:Equiv_norm} and \eqref{rez bdd}, we may bound
\begin{align}\label{H1 rez bdd}
\|(L_u - z)^{-1}\|_{L^2_+\to H^1_+}
&\leq \|(L_u +\kappa_0)^{-1}\|_{L^2_+\to H^1_+} \notag\\
&\quad+ (|z| + \kappa_0) \|(L_u +\kappa_0)^{-1}\|_{L^2_+\to H^1_+}\|(L_u - z)^{-1}\|_{L^2_+\to L^2_+} \notag\\
&\lesssim_{\|u\|_2} 1+\frac{1+|z|}{\operatorname{dist}\big(z, \sigma(L_u)\big)} \lesssim_{\|u\|_2} \frac{1+|z|}{\operatorname{dist}\big(z, \sigma(L_u)\big)}.
\end{align}
Consequently, for $z \notin \sigma(L_u)$ we obtain
\begin{align}\label{8:19}
\|\Pi(u-v)(L_u-z)^{-1}\|_{L^2_+\to L^2_+} 
&\leq \|u-v\|_{L^2}\|(L_u - z)^{-1}\|_{L^2_+\to H^1_+}\notag\\
&\lesssim_{\|u\|_2} \frac{1+|z|}{\operatorname{dist}\big(z, \sigma(L_u)\big)} \|u-v\|_{L^2}<1,
\end{align}
provided $v\in L^2(\R)$ satisfies \eqref{close} with a sufficiently small constant $c(\|u\|_{L^2})$.  This shows that for $z \notin \sigma(L_u)$ and $v$ satisfying \eqref{close}, the operator $L_v-z$ is invertible and its inverse satisfies
$$
(L_v-z)^{-1} = (L_u-z)^{-1} \sum_{\ell\geq 0} (-1)^\ell \bigl[\Pi(u-v)(L_u-z)^{-1}\bigr]^\ell.
$$
In particular, using \eqref{H1 rez bdd} and \eqref{8:19}, we get
\begin{align*}
\|(L_v- z)^{-1}\|_{L^2_+\to H^1_+}
&\lesssim \|(L_u - z)^{-1}\|_{L^2_+\to H^1_+}\sum_{\ell\geq 0}\|\Pi(u-v)(L_u-z)^{-1}\|_{L^2_+\to L^2_+}^\ell  \\
&\lesssim_{\|u\|_2} \frac{1+|z|}{\operatorname{dist}\big(z, \sigma(L_u)\big)},
\end{align*}
which proves \eqref{rez v}.

Finally, by the resolvent identity, \eqref{rez v}, and \eqref{H1 rez bdd}, we may bound
\begin{align*}
\|(L_u - z)^{-1} &- (L_v - z)^{-1}\|_{L^2_+\to H^1_+} \\
&= \|(L_u - z)^{-1} \Pi(u - v) (L_v - z)^{-1}\|_{L^2_+\to H^1_+}\\
&\leq \|(L_u- z)^{-1}\|_{L^2_+\to H^1_+} \|u-v\|_{L^2}\|(L_v- z)^{-1}\|_{L^2_+\to H^1_+}\\
&\lesssim_{\|u\|_2} \frac{(1+|z|)^2}{\operatorname{dist}^2\big(z, \sigma(L_u)\big)}\|u - v\|_{L^2},
\end{align*}
which completes the proof of \eqref{E:Lipschitz z}.
\end{proof}

We write $X$ for the (maximal) generator of the contraction semigroup
\begin{align}\label{X0}
\widehat{ e^{-itX} h }(\xi) = \widehat{h}(\xi+t) \qtq{acting on} L^2_+(\R).
\end{align}
Correspondingly,
\begin{align}\label{X2}
X h = \lim_{t\searrow0} \tfrac{1-e^{-itX}}{it} h \qtq{for all} h\in \D(X)
\end{align}
and
\begin{align*}
\D(X) := \bigl\{ h\in L^2_+(\R) : \text{this $L^2$-limit exists} \bigr\} = \bigl\{ h\in L^2_+(\R) : \widehat h \in H^1\bigl[0,\infty)\bigr)\bigr\}.
\end{align*}

We write $I_+$ for the unbounded linear functional
\begin{equation}\label{I_+}
   I_+ (f) := \lim_{y\to\infty} 2\pi y f(iy)  = \lim_{y\to\infty} \langle \chi_y, f\rangle \qtq{where} \chi_y(x):= \tfrac{iy}{x+iy}.
\end{equation}
The domain of $I_+$ is comprised of those functions for which the limit above exists.  We warn the reader that $I_+$ is \emph{not} closable on $L^2_+(\R)$; indeed, the kernel is dense.  The equivalent definition in Fourier variables reads 
\begin{equation}\label{I_+ hat}
   I_+ (f) := \lim_{y\to\infty} \sqrt{2\pi} \int_0^\infty y e^{-y\xi} \widehat f(\xi)\, d\xi.
\end{equation}
In particular, if $\widehat f$ is continuous on $[0,\infty)$, then $I_+(f) = \sqrt{2\pi} \widehat f(0)$.

Next, we recall some well-known commutation relations between $X$ and the Lax operator, extending them to the present setting of limited regularity and poor spatial decay:

\begin{lemma}\label{L:Com}
We have $[X,L_0]=i\Id$, by which we mean that for any $h\in L^2_+(\R)$,
\begin{align}\label{6.1}
 \bigl[ \tfrac{1 - e^{-itX}}{it} , \, L_0\bigr] h \longrightarrow i h \quad\text{in $L^2$ sense as $t\searrow 0$.}
\end{align}
Furthermore, if $u\in L^2(\R)$, then $[ X , \, \CS u \bigr] = \tfrac{i}{2\pi} u_+ \otimes I_+$ in the sense that if $h\in L^2_+(\R)$ is such that $\widehat h(\xi)$ is continuous on $[0,\infty)$, then
\begin{align}\label{6.3}
 \bigl[ \tfrac{1 - e^{-itX}}{it} , \, \CS u \bigr] h \longrightarrow \tfrac{i}{2\pi} I_+(h) u_+   \quad\text{in $L^2$ sense as $t\searrow 0$}.
\end{align}
\end{lemma}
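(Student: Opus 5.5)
Both claims are most transparent on the Fourier side, where $e^{-itX}$ acts as the translation $\widehat h(\xi)\mapsto \widehat h(\xi+t)$ restricted to $\xi\geq 0$. I would therefore compute each commutator explicitly in Fourier variables and then pass to the limit $t\searrow 0$ using dominated convergence and continuity of translation in $L^2$.

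\textbf{First claim.} Since $\widehat{L_0 h}(\xi)=\xi\widehat h(\xi)$, we get
\[
\widehat{e^{-itX}L_0h}(\xi)=(\xi+t)\widehat h(\xi+t), \qquad \widehat{L_0e^{-itX}h}(\xi)=\xi\,\widehat h(\xi+t).
\]
Hence $[e^{-itX},L_0]h$ has Fourier transform $t\,\widehat h(\xi+t)$. It follows that
\[
\bigl[\tfrac{1-e^{-itX}}{it},L_0\bigr]h=-\tfrac1{it}[e^{-itX},L_0]h,
\]
whose Fourier transform is $i\,\widehat h(\xi+t)\mathbf 1_{\xi\ge0}$. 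This converges in $L^2$ to $i\widehat h$ by continuity of translation. The computation only needs to make sense for $h$ in a suitable domain; as written, the commutator is interpreted on $h\in H^1_+$, and the formula then extends by density because the resulting operator is bounded.

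\textbf{Second claim.} Write $\widehat{\CS(uh)}(\xi)=(2\pi)^{-1/2}\mathbf 1_{\xi\ge0}\int_0^\infty \widehat u(\xi-\eta)\widehat h(\eta)\,d\eta$. Then
\[
\widehat{e^{-itX}\CS(uh)}(\xi)=(2\pi)^{-1/2}\int_0^\infty \widehat u(\xi+t-\eta)\widehat h(\eta)\,d\eta,
\]
while $\widehat{\CS(u\,e^{-itX}h)}(\xi)=(2\pi)^{-1/2}\int_0^\infty\widehat u(\xi-\eta)\widehat h(\eta+t)\,d\eta$. The substitution $\eta\mapsto\eta-t$ turns the latter into $\int_t^\infty \widehat u(\xi+t-\eta)\widehat h(\eta)\,d\eta$. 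The difference $[e^{-itX},\CS u]h$ therefore has Fourier transform
\[
(2\pi)^{-1/2}\mathbf 1_{\xi\ge0}\int_0^t\widehat u(\xi+t-\eta)\widehat h(\eta)\,d\eta .
\]
Dividing by $-it$ gives
\[
\tfrac{i}{\sqrt{2\pi}}\,\mathbf 1_{\xi\ge0}\,\tfrac1t\int_0^t\widehat u(\xi+t-\eta)\widehat h(\eta)\,d\eta .
\]
We must show that this converges in $L^2_\xi$ to $\tfrac{i}{\sqrt{2\pi}}\widehat h(0)\widehat u(\xi)\mathbf 1_{\xi\ge0}$. That limit equals $\tfrac{i}{2\pi}I_+(h)\widehat{u_+}(\xi)$, since $I_+(h)=\sqrt{2\pi}\,\widehat h(0)$ when $\widehat h$ is continuous.

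The main (mild) obstacle is this $L^2$ convergence, which I would handle by splitting the average. First replace $\widehat h(\eta)$ by $\widehat h(0)$. The error is bounded in $L^2_\xi$ via Minkowski by
\[
\sup_{0\le\eta\le t}|\widehat h(\eta)-\widehat h(0)|\,\|\widehat u\|_{L^2}\to0,
\]
using continuity of $\widehat h$ at $0$. What remains is $\widehat h(0)\tfrac1t\int_0^t\widehat u(\xi+s)\,ds$, an average of translates of $\widehat u$. By Minkowski it lies within $\sup_{0\le s\le t}\|\widehat u(\cdot+s)-\widehat u\|_{L^2}\to0$ of $\widehat u$. This completes the argument.
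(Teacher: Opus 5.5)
Your proposal is correct and follows the paper's proof. The paper likewise computes $\tfrac{i}{t}[e^{-itX},L_0]h = ie^{-itX}h$ and the Fourier formula $\tfrac{i}{t\sqrt{2\pi}}\int_0^t \widehat u(\xi+t-\eta)\widehat h(\eta)\,d\eta$ for the second commutator, then concludes from the continuity of $\widehat h$ and of translations in $L^2$; your Minkowski splitting spells out exactly this step, and the only cosmetic difference is that the paper defines the commutator on all of $L^2_+(\R)$ via the bound $\|e^{-itX}f\|_{H^{-1}_+}\lesssim \langle t\rangle \|f\|_{H^{-1}_+}$ rather than by density from $H^1_+(\R)$.
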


\begin{proof}
We recall from \cite[Lemma 4.2]{BKV25} that for any $\sigma \in \R$ and $t \geq 0$,
$$
\|e^{-itX}f\|_{H^\sigma_+} \lesssim \langle t \rangle^{|\sigma|} \|f\|_{H^\sigma_+}.
$$
Consequently, the left-hand side of \eqref{6.1} is well-defined as a bounded mapping from $L^2_+(\R)$ into $H^{-1}_+(\R)$. However, a direct computation yields
\begin{equation}\label{6015}
    \tfrac{i}{t} [e^{-itX}, L_0] h = i e^{-itX} h,
\end{equation}
demonstrating that the commutator actually maps $L^2_+(\R)$ to $L^2_+(\R)$. Taking the limit as $t \searrow 0$, we find
$$
    \tfrac{i}{t}\bigl[ e^{-itX}, \, L_0\bigr] h\longrightarrow ih \quad\text{in $L^2_+(\R)$ as $t\searrow 0$}.
$$
A slightly longer computation (details available in the proof of \cite[Prop.~1.4]{BKV25}) shows that for $\xi\geq 0$,
\begin{align}\label{606}
\bigl( \tfrac{i}{t} \bigl[ e^{-itX}, \,\CS u \bigr] h \bigr)\widehat{\vphantom{\big|}\ }(\xi)
	&= \tfrac{i}{t\sqrt{2\pi}} \int_0^t \widehat u(\xi+t-\eta) \widehat h(\eta)\,d\eta .
\end{align}
Claim \eqref{6.3} follows easily from the continuity of $\widehat h$ and of translations in $L^2(\R)$.
\end{proof}

Our reason for revisiting these commutation relations is to understand how $X$ interacts with the resolvent of the Lax operator:

\begin{prop}\label{P:CT}
Fix $A>0$.  For any $u,v \in L^2(\mathbb{R})$ and $z\in \C$ satisfying
\begin{align}\label{6.7}
 \dist(z, \sigma(L_u)) > A^{-1}, \quad  \dist(z, \sigma(L_v)) > A^{-1}, \quad \|u\|_{L^2} + \|v\|_{L^2} + |z| < A,
\end{align}
we have 
\begin{align}\label{6.5}
\bigl\| X (L_u-z)^{-1} g \bigr\|_{H^1} &\lesssim_A \| g \|_{L^2} + \| X g \|_{L^2}, \\
	\label{6.6}
\bigl\| X \bigl[ (L_u-z)^{-1} - (L_v-z)^{-1}\bigr] g \bigr\|_{H^1} &\lesssim_A \Bigl[ \| g \|_{L^2} + \| X g \|_{L^2}\Bigr] \| u-v \|_{L^2},
\end{align}
uniformly for $g\in \D(X)$.
\end{prop}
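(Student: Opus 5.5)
The plan is to commute $X$ past the resolvent using Lemma~\ref{L:Com}. Formally, $[X, L_u] = [X,L_0] - [X,\CS u] = i\Id - \tfrac{i}{2\pi} u_+\otimes I_+$. Applying this to $h := (L_u-z)^{-1}g$ gives the identity
\begin{align*}
X (L_u-z)^{-1} g = (L_u-z)^{-1} X g - i (L_u-z)^{-2} g + \tfrac{i}{2\pi} I_+\bigl((L_u-z)^{-1}g\bigr)\, (L_u-z)^{-1} u_+ .
\end{align*}
Granting this identity, \eqref{6.5} follows immediately. By \eqref{H1 rez bdd}, whose constants are controlled by $A$ through \eqref{6.7}, the resolvent maps $L^2_+\to H^1_+$ with norm $\lesssim_A 1$. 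So the first two terms are bounded by $\|Xg\|_{L^2}+\|g\|_{L^2}$, and the last by $|I_+(h)|\,\|u\|_{L^2}$. For the functional, I would use $|I_+(h)| = \sqrt{2\pi}|\widehat h(0)| \lesssim \|\widehat h\|_{H^1[0,\infty)}\lesssim \|h\|_{L^2}+\|Xh\|_{L^2}$. Here $\|h\|_{L^2}\lesssim_A \|g\|$, and $Xh$ is precisely what we are estimating. To break this circularity, I would take the $L^2$ pairing of the identity with $\chi_y$, or more simply use $\widehat h(0)$ directly. Since $I_+$ enters linearly, the identity can be solved for $I_+(h)$ as
\begin{align*}
I_+(h)\Bigl[1 - \tfrac{i}{2\pi}I_+\bigl((L_u-z)^{-1}u_+\bigr)\Bigr] = I_+\bigl((L_u-z)^{-1}Xg - i(L_u-z)^{-2}g\bigr).
\end{align*}
An alternative avoids this division: bound $|\widehat h(0)|\le \|\widehat h\|_{L^\infty}$ via $\|\widehat h\|_{L^\infty}^2\lesssim \|\widehat h\|_{L^2}\|\partial_\xi\widehat h\|_{L^2}$. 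Then $|I_+(h)|\,\|u_+\|\lesssim \|h\|^{1/2}\|Xh\|^{1/2}$, and we absorb: $\|Xh\|\le C(\|g\|+\|Xg\|)+C\|h\|^{1/2}\|Xh\|^{1/2}$, which gives $\|Xh\|\lesssim_A \|g\|+\|Xg\|$. This absorption route is the one I will commit to. It needs an a priori finiteness of $\|Xh\|$, which the rigorous step below supplies.

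The main obstacle is justifying the identity rigorously, because $X$ is unbounded and $I_+$ is not closable. I would work with the difference quotients $X_t := \tfrac{1-e^{-itX}}{it}$, which are bounded on $H^\sigma_+$. I would write $X_t(L_u-z)^{-1}g = (L_u-z)^{-1}X_tg + (L_u-z)^{-1}[L_u,X_t](L_u-z)^{-1}g$. The needed commutators come from \eqref{6015} and \eqref{606}, which hold exactly for each $t>0$, not just in the limit. The formula \eqref{606} gives the bound $\|[X_t,\CS u]h\|_{L^2}\lesssim \|u\|\,\sup_{[0,t]}|\widehat h|\lesssim \|u\|\,\|\widehat h\|_{L^\infty}$, uniformly in $t$. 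Sobolev embedding in $\xi$ bounds $\|\widehat h\|_{L^\infty}$ by $\|h\|_{L^2}^{1/2}\|X_t$-type derivative$\|^{1/2}$; more safely, $\widehat h$ is continuous because $h\in H^1_+$ has $\widehat h\in L^1$. Carrying out the absorption at fixed $t$ is convenient, using the truncated derivative $\|\widehat h(\cdot+t)-\widehat h\|/t$ in place of $\|Xh\|$. The uniform bound shows that $X_t h$ is bounded as $t\searrow 0$, so $h\in\D(X)$, and passage to the limit via \eqref{6.3} yields the identity. The $H^1$ norm in \eqref{6.5} comes from the outer resolvent factors, which map into $H^1_+$. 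If the fixed-$t$ absorption is awkward, I would first prove everything for $u$ Schwartz, where $\widehat h$ is smooth, and then extend by density using \eqref{E:Lipschitz z}.

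For \eqref{6.6}, I would subtract the identities for $u$ and $v$. Each difference is handled by the resolvent Lipschitz bound \eqref{E:Lipschitz z}, which applies after splitting $u-v$ into small steps, or directly via the resolvent identity, since both distances to the spectra are controlled by \eqref{6.7}. The rank-one term splits as $[I_+(h_u)-I_+(h_v)](L_u-z)^{-1}u_+ + I_+(h_v)[(L_u-z)^{-1}u_+-(L_v-z)^{-1}v_+]$. The difference $h_u-h_v = (L_u-z)^{-1}\CS(u-v)h_v$ has its $X$-norm estimated by applying \eqref{6.5} to $g' = \CS(u-v)h_v$. This requires $\|X\CS(u-v)h_v\|$, which is handled by \eqref{6.3} and the product rule $X(\CS w h)=\CS w Xh + \tfrac{i}{2\pi}I_+(h)w_+$. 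Absorbing once more then closes the estimate.
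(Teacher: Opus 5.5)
Your architecture matches the paper's. You commute $X_t:=\tfrac{1-e^{-itX}}{it}$ past $L_u-z$ using \eqref{6015} and \eqref{606}, send $t\searrow0$ with Lemma~\ref{L:Com}, and obtain \eqref{6.6} from the resolvent identity plus the product rule for $X\CS w$, which is exactly \eqref{824}.

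The gap is in the one step with real content: a priori control of $\widehat h$ near $\xi=0$ for $h=(L_u-z)^{-1}g$. This is needed both for the continuity hypothesis in \eqref{6.3} and to bound $I_+(h)$, and none of the justifications you offer holds.
\begin{enumerate}
\item At fixed $t$ there is no bound $\sup_{[0,t]}|\widehat h|\lesssim\|h\|_{L^2}^{1/2}\|X_th\|_{L^2}^{1/2}$. If $\widehat h$ is a bump of height $H$ and width $w\ll t$ inside $[0,t)$, then $\|X_th\|=\|h\|/t$, and the inequality fails by a factor $\sqrt{t/w}$.
\item $\widehat h\in L^1$ makes $h$ continuous, not $\widehat h$.
\item For Schwartz $u$, $\widehat h$ is not smooth, since $\widehat g$ is only in $H^1([0,\infty))$.
\end{enumerate}
So, as written, you never obtain $h\in\D(X)$, and your absorption step presupposes it.

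The missing observation is to read the resolvent equation $(L_u-z)h=g$ in Fourier variables: for $\xi\geq0$,
\[
\widehat h(\xi)=\tfrac{1}{\xi-z}\bigl[\widehat{uh}(\xi)+\widehat g(\xi)\bigr].
\]
Here $|\xi-z|>A^{-1}$. The term $\widehat{uh}$ is bounded and continuous because $\|uh\|_{L^1}\leq\|u\|_{L^2}\|h\|_{L^2}\lesssim_A\|g\|_{L^2}$. Also $\|\widehat g\|_{L^\infty}^2\leq 2\|g\|_{L^2}\|Xg\|_{L^2}$. Hence $\widehat h$ is continuous on $[0,\infty)$ and $|I_+(h)|\lesssim_A\|g\|_{L^2}+\|Xg\|_{L^2}$ directly, with no circularity and no absorption.

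Lemma~\ref{L:Com} then shows that $(L_u-z)X_th$ converges in $L^2$, so $X_th$ converges in $H^1$ by \eqref{z iso}. This $H^1$ convergence is also what justifies your product rule in \eqref{6.6}, since $\CS w$ is not bounded on $L^2$.

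Your own routes can be repaired, but the repairs are not in your proposal. For the fixed-$t$ route, \eqref{606} only involves the average $t^{-1}\int_0^t|\widehat h|$. Telescoping along $\eta+kt$ bounds this average by $\sqrt2\,\|h\|^{1/2}\|X_th\|^{1/2}$, which is what the fixed-$t$ absorption needs. For the density route, the same Fourier-side identity shows $\widehat h\in H^1([0,\infty))$ when $u$ is Schwartz.
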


\begin{proof}
We will focus first on \eqref{6.5} and then turn to \eqref{6.6} later.

By Theorem~\ref{T:Lu-Intro} and the resolvent identity, we know that
\begin{equation}\label{z iso}
\| f\|_{H^1_+} \lesssim_A \| (L_u-z) f \|_{L^2} \lesssim_A \| f\|_{H^1_+} .
\end{equation}

We will show that in the case of $h:= (L_u-z)^{-1} g$, the limit \eqref{X2} exists not only in $L^2(\R)$, but even in $H^1(\R)$.  In view of \eqref{z iso}, it suffices to show that  
\begin{align}\label{813}
(L_u-z) \tfrac{1-e^{-itX}}{it}  h \quad \text{converges in $L^2$ sense as $t\searrow 0$.}
\end{align}
Notice that by rearranging $(L_u-z) h=g$, we may write 
\begin{align}\label{814}
(L_u -z) \tfrac{1-e^{-itX}}{it} h = -\bigl[ \tfrac{1 - e^{-itX}}{it}, \, L_0\bigr] h + \bigl[ \tfrac{1 - e^{-itX}}{it}, \,\CS u \bigr] h
	+ \tfrac{1-e^{-itX}}{it} g .
\end{align}
As $g\in \D(X)$, the last term converges to $Xg$ as $t\searrow 0$.

Before we can send $t\searrow 0$ throughout \eqref{814} using Lemma~\ref{L:Com}, we first need to verify the continuity of $\widehat h(\xi)$.  For this purpose, we rewrite $(L_u-z) h=g$ as 
\begin{align*}
\widehat{h}(\xi) = \tfrac1{\xi-z} \bigl[ \mkern2mu \widehat{uh}(\xi) + \widehat g(\xi) \bigr] \qtq{for all} \xi\geq 0
\end{align*}
and then observe that 
\begin{align*}
uh\in L^1(\R) \qtq{with}  \| uh \|_{L^1} \leq \| u \|_{L^2} \| h \|_{L^2} \lesssim_A  \| g \|_{L^2},
\end{align*}
which guarantees that $\widehat{uh}(\xi)$ is bounded and continuous.  Also,
\begin{align*}
\| \widehat g \mkern 1mu\|_{C}^2 \leq 2 \| \widehat g\mkern 1.5mu '\|_{L^2} \| \widehat g \mkern 1mu\|_{L^2} \leq  2 \| X g\|_{L^2}   \| g \|_{L^2},
\end{align*}
so $\widehat g\mkern 1mu$ is also bounded and continuous. We may then infer that $\widehat{h}(\xi)$ is continuous on $[0,\infty)$ and
\begin{align*}
 \bigl| I_+(h) \bigr| \leq \|\widehat{h}\|_{L^\infty} \lesssim_A \bigl[ \| uh \|_{L^1} + \|\widehat g\mkern1.5mu \|_{L^\infty} \bigr]
 	\lesssim_A  \| g \|_{L^2} + \| X g \|_{L^2} .
\end{align*}

With this preliminary completed, we may now use Lemma~\ref{L:Com} to send $t\searrow 0$ in \eqref{814} to obtain \eqref{813} and the estimate \eqref{6.5}.

We turn now to \eqref{6.6}.  Given $w\in L^2(\R)$ and $g\in \D(X)$, we write
\begin{equation*}
\tfrac{1 - e^{-itX}}{it} \CS w (L_u-z)^{-1} g = \bigl[ \tfrac{1 - e^{-itX}}{it},\,\CS w \bigr] (L_u-z)^{-1}  g + \CS  w \tfrac{1 - e^{-itX}}{it} (L_u-z)^{-1}  g
\end{equation*}
with the intention of sending $t\searrow0$.  By \eqref{6.5} we know that $(L_u-z)^{-1}  g \in \D(X)$, which verifies the continuity requirement needed to apply \eqref{6.3} to the first term.  To take the limit in the second term, we further rely on the $H^1_+$ convergence result encapsulated in \eqref{813}; this guarantees $L^2_+$ convergence even after the application of $\CS w$.  In this way, we deduce first that $\CS w (L_u-z)^{-1} g \in \D(X)$ and then that
\begin{equation}\label{824}
\bigl\| X \CS w (L_u-z)^{-1} g \bigr\| \lesssim_A \| w\|_{L^2} \Bigl[ \| g \|_{L^2} + \| X g \|_{L^2}  \Bigr].
\end{equation}

To complete the proof of \eqref{6.6}, we use \eqref{824} with $w=u-v$, the identity
\begin{equation}
(L_u-z)^{-1} - (L_v-z)^{-1} = (L_v-z)^{-1} \CS (u-v) (L_u-z)^{-1}, 
\end{equation}
and the fact that \eqref{6.5} applies equally well to $L_v$.
\end{proof}

Our first application of this proposition is qualitative in nature, namely, proving that eigenfunctions of $L_u$ belong to the domain of $X$.  This property of eigenfunctions was observed previously in \cite{GG26}.  In Section~\ref{S:profiles} we will employ this more quantitatively in order to control how eigenfunctions depend on the potential $u\in L^2(\R)$.

\begin{corollary}\label{C: f in X}
For $u\in L^2(\R)$, any eigenfunction $f$ of the Lax operator $L_u$ satisfies $f\in \mathcal D(X)$ and $Xf \in H^1_+(\R)$.
\end{corollary}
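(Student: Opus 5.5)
We will deduce the corollary from Proposition~\ref{P:CT} by writing the eigenfunction as a resolvent applied to a nice vector. Let $L_u f=\la f$ with $f\neq 0$. By Proposition~\ref{P:Wu_identity}, $\la<0$. Since $\la$ is an isolated eigenvalue below the essential spectrum $[0,\infty)$, we may fix a point $z\in\C\setminus\R$, say $z=\la+i$. Then
\[
(L_u-z)f=(\la-z)f=-if, \qtq{so} f=i(L_u-z)^{-1}f.
\]
This alone is circular: to apply \eqref{6.5} with $g=f$ we would already need $f\in\D(X)$. We therefore need a $g\in\D(X)$, not built from $f$, that produces $f$ under a resolvent.

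The natural choice is to exploit the structure $(L_0-\la)f=\Pi(uf)$. Taking Fourier transforms, for $\xi\ge0$ this reads
\[
\widehat f(\xi)=\frac{\widehat{uf}(\xi)}{\sqrt{2\pi}\,(\xi-\la)}.
\]
We must check that $\widehat f'\in L^2[0,\infty)$. The factor $(\xi-\la)^{-1}$ is smooth and bounded with bounded derivative, since $\la<0$. The issue is $\widehat{uf}$: it is bounded and continuous because $uf\in L^1$, but its derivative corresponds to $xuf$, which need not be in $L^2$ when $u$ has no decay.

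For this reason we avoid a direct Fourier computation and instead use the commutator machinery. Set $h_t=\tfrac{1-e^{-itX}}{it}f$ and apply $L_u-\la$, as in \eqref{814} with $g=0$:
\[
(L_u-\la)h_t=-\bigl[\tfrac{1-e^{-itX}}{it},L_0\bigr]f+\bigl[\tfrac{1-e^{-itX}}{it},\CS u\bigr]f .
\]
Continuity of $\widehat f$ is supplied by Proposition~\ref{P:Wu_identity}, so Lemma~\ref{L:Com} shows that the right-hand side converges in $L^2$ to
\[
F:=-if+\tfrac{i}{2\pi}I_+(f)\,u_+ .
\]
The problem is that $L_u-\la$ is not invertible: it has kernel $\mathrm{span}\{f\}$. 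We handle this by splitting $h_t=\alpha_t f+h_t^\perp$ with $h_t^\perp\perp f$. On $f^\perp$ the operator $L_u-\la$ is boundedly invertible from $L^2$ into $H^1$, because $\la$ is isolated; this is proved as in \eqref{H1 rez bdd}, using the reduced resolvent. Hence $h_t^\perp$ converges in $H^1$, namely to $(L_u-\la)^{-1}P_\perp F$.

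\textbf{Main obstacle: the coefficient $\alpha_t=\langle f,h_t\rangle$.} We expect to control it by writing
\[
\alpha_t=\Bigl\langle \tfrac{1-e^{itX^*}}{-it}f,\,f\Bigr\rangle
\]
and noting that $\widehat{e^{-itX}f}(\xi)=\widehat f(\xi+t)$. With this,
\[
\alpha_t=\tfrac{1}{it}\int_0^\infty \overline{\widehat f(\xi)}\,\bigl(\widehat f(\xi)-\widehat f(\xi+t)\bigr)\,d\xi .
\]
Since $\widehat f$ is continuous and $\widehat f\in L^2$, this need not converge on its own. Our plan is to take the real and imaginary parts. The imaginary part of $t\alpha_t$ is essentially a telescoping difference, $\|\widehat f\|^2-\Re\langle \widehat f,\widehat f(\cdot+t)\rangle$; this combination equals $\tfrac12\int_0^t|\widehat f|^2+\tfrac12\|\widehat f-\widehat f(\cdot+t)\|^2$. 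The first piece contributes the bounded quantity $\tfrac12|\widehat f(0)|^2$ after division by $t$. The second is $\tfrac t2\|h_t\|^2$ times $t$, so it is controlled in terms of $\|h_t\|^2$.

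Combining this with the convergence of $h_t^\perp$ yields a quadratic inequality that bounds $|\alpha_t|$ uniformly. Boundedness of $h_t$ in $L^2$ then gives $f\in\D(X)$, because $X$ is closed and is the generator of a contraction semigroup; in particular, $\sup_t\|h_t\|<\infty$ implies that $f$ lies in the domain. Convergence of $\alpha_t$ then follows from the semigroup limit itself.

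Finally, $Xf\in H^1_+$ follows from the $H^1$ convergence of $h_t^\perp$ together with $f\in H^1_+$.
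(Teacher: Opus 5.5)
Your setup is sound and differs from the paper's route. The paper uses the Riesz projection $E_\la$ built from resolvents at $z\notin\sigma(L_u)$: it integrates Proposition~\ref{P:CT} over a contour and applies the result to a $g$ with $\widehat g\in C^\infty_c$ and $\langle f,g\rangle\neq0$. You instead work directly at $z=\la$, and there the commutator limits of Lemma~\ref{L:Com} together with the reduced resolvent do give $h_t^\perp\to k$ in $H^1$. Your closing steps are also fine.

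\textbf{The gap.} The step you flag as the main obstacle is not resolved. With $\|f\|_{L^2}=1$, your identity reads
\[
-\Im\alpha_t=\tfrac1{2t}\int_0^t|\widehat f(\xi)|^2\,d\xi+\tfrac t2\bigl(|\alpha_t|^2+\|h_t^\perp\|_{L^2}^2\bigr).
\]
This holds for every $f\in L^2_+(\R)$; it only encodes that $e^{-itX}$ is a contraction. It says nothing about $\Re\alpha_t=-t^{-1}\Im\langle\widehat f,\widehat f(\cdot+t)\rangle$. That real part is exactly what converges to the center $\Re\langle f,Xf\rangle$, so it carries the localization information you are trying to prove. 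No quadratic inequality bounding $|\alpha_t|$ follows. Take $\Re\alpha_t=t^{-1/2}$, so that $\tfrac t2(\Re\alpha_t)^2=\tfrac12$, and let $\Im\alpha_t$ be the bounded root of the identity. This is compatible with every relation you list, including $h_t^\perp\to k$. Even for $\Im\alpha_t$ alone, the quadratic has a second root near $2/t$. You would need to exclude it separately, using $t\|h_t\|=\|\widehat f-\widehat f(\cdot+t)\|\to0$.

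\textbf{The fix.} The missing ingredient is a test vector; this is also the core of the paper's argument, in dual form. Pick $\phi$ with $\widehat\phi\in C^\infty_c((0,\infty))$ and $\langle\phi,f\rangle\neq0$. Moving the translation onto $\widehat\phi$ shows that $\langle\phi,h_t\rangle$ converges as $t\searrow0$, namely to $\langle X^*\phi,f\rangle$. Meanwhile $\langle\phi,h_t^\perp\rangle\to\langle\phi,k\rangle$. Hence
\[
\alpha_t=\frac{\langle\phi,h_t\rangle-\langle\phi,h_t^\perp\rangle}{\langle\phi,f\rangle}
\]
converges to some $\alpha$. Then $h_t=\alpha_tf+h_t^\perp$ converges in $L^2$, which gives $f\in\D(X)$ and $Xf=\alpha f+k\in H^1_+(\R)$ directly. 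No real/imaginary analysis or weak-compactness argument is needed.

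With this repair your route is a valid alternative to the paper's. It avoids the contour integral and Proposition~\ref{P:CT}. The price is handling the one-dimensional kernel of $L_u-\la$ by hand and importing the continuity of $\widehat f$ from Proposition~\ref{P:Wu_identity}.
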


\begin{proof}
By Proposition~\ref{P:Wu_identity}, all eigenvalues of $L_u$ are negative and simple.  Furthermore, as noted in Theorem~\ref{T:Lu-Intro}, the essential spectrum of $L_u$ is $[0,\infty)$.  Therefore, for any eigenvalue $\lambda$ of $L_u$, we may find a smooth contour $\Gamma$ in the left complex half-plane enclosing $\lambda$ but no other part of the spectrum.  Thus, the (rank-one) spectral projection onto the associated eigenspace can be expressed as
\begin{equation}\label{R to E}
E_\lambda := \tfrac{-1}{2\pi i} \oint_\Gamma  (L_u-z)^{-1} \,dz .
\end{equation}

Proposition~\ref{P:CT} guarantees that $E_\lambda g \in D(X)$ for any $g\in D(X)$; moreover,
\begin{align}\label{904}
\|X E_\lambda g\|_{H^1} \lesssim \|g\|_{L^2} + \|Xg\|_{L^2}.
\end{align}
To continue, we observe that $D(X)$ is dense in $L^2_+(\R)$; indeed, $D(X)$ contains every $g\in L^2_+(\R)$ for which $\widehat g \in C^\infty_c$.  Given an eigenfunction $f$, we choose such a $g$ with $\langle f, g\rangle\neq 0$; then $E_\lambda g = \langle f, g\rangle f$ belongs to $\D(X)$ and so $f\in \D(X)$. That $Xf\in H^1_+(\R)$ follows from \eqref{904}.
\end{proof}

\subsection{The Peter operator and its unitary flow}\label{SS:3.2}

The first result in this subsection contains several key estimates satisfied by the Peter operator $P_u$, as well as its commutation relation with $X$.

\begin{prop}[The Peter operator] For $u\in L^2(\R)$,
\begin{equation}\label{-iP} 
-i P_u : h \mapsto -\partial_x^2 h + 2i\partial_x \Pi(uh) - 2 i u'_+ h
\end{equation}
defines a semibounded selfadjoint operator with form domain $H^1_+(\R)$.  Moreover,
\begin{equation}\label{P is Lip}
\| P_{u} \|_{H^1_+ \to H^{-1}_+} \lesssim 1+ \| u \|_{L^2} \qtq{and} \| P_{u} - P_{v} \|_{H^1_+ \to H^{-1}_+} \lesssim \| u - v \|_{L^2}.
\end{equation}
Finally, $[X, P_u]  = -2 L_u $, by which we mean that for any $h\in H^1_+(\R)$ we have
\begin{equation}\label{E:XPu_commutator}
 \bigl[ \tfrac{1 - e^{-itX}}{it} , \, P_u \bigr] h \longrightarrow -2 L_u h  \quad\text{in $L^2$ sense as $t\searrow 0$}.
\end{equation}
\end{prop}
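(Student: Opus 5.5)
The plan is to treat $-iP_u$ as a form perturbation of $-\partial_x^2$ on $L^2_+(\R)$ and apply the KLMN theorem. The associated sesquilinear form, for $h\in H^1_+(\R)$, is
\begin{equation*}
q_u(h,h) = \|\partial_x h\|_{L^2}^2 - 2i\langle \partial_x h, \Pi(uh)\rangle - 2i\langle h, u'_+ h\rangle .
\end{equation*}
The last term must be understood via $u'_+h\in H^{-1}$, pairing $\partial_x$ onto $\bar h h$. Since $\langle h, u'_+h\rangle = -\int u_+ \partial_x(|h|^2)\,dx$, we have
\begin{equation*}
|\langle h, u'_+h\rangle| \lesssim \|u\|_{L^2}\|h\|_{L^\infty}\|\partial_x h\|_{L^2}.
\end{equation*}
For the middle term, $|\langle \partial_x h, \Pi(uh)\rangle|\le \|\partial_x h\|_{L^2}\|u\|_{L^2}\|h\|_{L^\infty}$. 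Gagliardo--Nirenberg gives $\|h\|_{L^\infty}^2\lesssim \|h\|_{L^2}\|\partial_xh\|_{L^2}$, so both perturbative terms are bounded by
\begin{equation*}
C\|u\|_{L^2}\|h\|_{L^2}^{1/2}\|\partial_x h\|_{L^2}^{3/2}\le \tfrac12\|\partial_xh\|_{L^2}^2 + C'\|u\|_{L^2}^4\|h\|_{L^2}^2 .
\end{equation*}
This is a relative form bound $<1$, and it also shows that the form is symmetric. Symmetry can alternatively be checked by noting that $-iP_u$ is formally symmetric: integrating by parts, the two cross terms combine into a real quantity. KLMN then yields a semibounded selfadjoint operator with form domain $H^1_+$. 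The main thing to verify carefully here is that the full form is Hermitian, i.e.\ $\overline{q_u(h,h)}=q_u(h,h)$. I would do this on Schwartz $h$ and smooth $u$ using $\langle \partial_x h,\Pi(uh)\rangle=\langle \partial_x h, uh\rangle$ (as $\partial_x h\in L^2_+$), then extend by density using the bounds above.

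The estimates \eqref{P is Lip} follow from the same duality computations, now in bilinear form. For $h,k\in H^1_+$,
\begin{equation*}
|\langle k,(P_u-P_v)h\rangle| \lesssim \|u-v\|_{L^2}\bigl(\|\partial_xk\|_{L^2}\|h\|_{L^\infty}+\|k\|_{L^\infty}\|\partial_x h\|_{L^2}+\|k\|_{L^\infty}\|h\|_{L^\infty}\bigr),
\end{equation*}
which is $\lesssim \|u-v\|_{L^2}\|h\|_{H^1}\|k\|_{H^1}$ by Sobolev embedding. The term with $u'_+$ is handled by moving the derivative onto $\bar k h$. Taking $v=0$ and using $\|\partial_x^2 h\|_{H^{-1}}\le\|h\|_{H^1}$ gives the first bound.

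For the commutator \eqref{E:XPu_commutator}, I would split $P_u = -i\partial_x^2 - 2\partial_x\Pi u + 2u'_+$ and write $\partial_x = iL_0$ on $L^2_+$. The first piece is $-i\partial_x^2 = iL_0^2$. By \eqref{6015}, $[D_t, L_0]=e^{-itX}$ where $D_t:=\tfrac{1-e^{-itX}}{it}$. Hence $[D_t,L_0^2]h = e^{-itX}L_0h + L_0e^{-itX}h \to 2L_0h$ in $L^2$, since $h\in H^1$, and so $[D_t,-i\partial_x^2]h\to 2i L_0h$. Wait—the sign should be checked against the target $-2L_u$; the precise constants will be tracked using $[X,L_0]=i$. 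The result is $[X,iL_0^2]=i(iL_0+L_0 i)=-2L_0$.

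Next, $-2\partial_x\Pi u = -2iL_0\Pi u$, so
\begin{equation*}
[X,-2iL_0\Pi u] = -2i\bigl([X,L_0]\Pi u + L_0[X,\Pi u]\bigr) = 2\Pi u - 2iL_0\tfrac{i}{2\pi}u_+I_+(h) = 2\Pi u + \tfrac{1}{2\pi}\cdot 2L_0u_+\,I_+(h)\cdot\tfrac{1}{1}.
\end{equation*}
The term $[X,2u'_+]$ has the form $2\Pi u'\,\cdot$, so by Lemma~\ref{L:Com} applied to $u'$ (formally) it equals $\tfrac{i}{\pi}(u')_+ I_+(h) = \tfrac{i}{\pi}\partial_x u_+\,I_+(h)$. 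Since $2L_0u_+ = -2i\partial_xu_+$, the two $I_+$ terms cancel exactly, leaving $-2L_0+2\Pi u=-2L_u$.

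The main obstacle is making this cancellation rigorous: $h\in H^1_+$ need not have $\widehat h$ continuous in a useful quantitative sense, and $u'\notin L^2$. I would instead compute directly on the Fourier side as in \eqref{606}. The combined kernel of $[D_t, -2\partial_x\Pi u+2u'_+]$ at frequency $\xi$ equals
\begin{equation*}
\tfrac{i}{t\sqrt{2\pi}}\int_0^t \widehat u(\xi+t-\eta)\widehat h(\eta)\,\bigl[-2i(\xi)+2i(\xi+t-\eta)\bigr]d\eta
\end{equation*}
plus the terms coming from $[D_t,\partial_x]$, which already give $2\Pi u h$ in the limit. The bracket is $O(t)$, so this remainder is bounded by $\int_0^t|\widehat u(\xi+t-\eta)||\widehat h(\eta)|\,d\eta$. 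Its $L^2_\xi$ norm is $\le \|u\|_{L^2}\int_0^t|\widehat h|\to0$, using only $\widehat h\in L^2$ (Cauchy--Schwarz gives $\sqrt t\|h\|_{L^2}$). Thus no continuity of $\widehat h$ is needed, and the convergence holds in $L^2$.
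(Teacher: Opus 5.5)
Your proposal is correct. The commutator step follows the paper's mechanism, while the selfadjointness step takes a genuinely different route.

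\textbf{Selfadjointness and \eqref{P is Lip}.} The paper first rewrites $-iP_u$ in the manifestly symmetric form $-\partial_x^2 + i\partial_x\Pi u + i\Pi u\partial_x + 2\Pi\Im(u'_+)$. It then shows, by one Fourier-side kernel computation, that $\langle -i\partial_x\rangle^{-1}(P_u-P_0)\langle -i\partial_x\rangle^{-1}$ is Hilbert--Schmidt with norm $\lesssim\|u\|_{L^2}$. This buys several things at once:
\begin{itemize}
\item the Lipschitz bounds \eqref{P is Lip} follow immediately;
\item symmetry comes for free from the rewriting;
\item infinitesimal form-boundedness follows, since replacing $\langle\xi\rangle$ by $(\xi^2+\kappa)^{1/2}$ makes the Hilbert--Schmidt norm $O(\kappa^{-1/4}\|u\|_{L^2})$.
\end{itemize}
You instead stay in physical space with the unsymmetrized form and use Gagliardo--Nirenberg. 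This gives the explicit bound $-iP_u\geq -C\|u\|_{L^2}^4$, but it obliges you to check by hand that the form is real. It is: for real $u$, the perturbative part equals $2\Re\bigl(2i\int u_+\bar h\,\partial_x h\,dx\bigr)$. Note that form bounds alone do not yield symmetry, despite one sentence in your write-up; your density argument is what actually supplies it.

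\textbf{The commutator \eqref{E:XPu_commutator}.} Your Fourier-side formula is exactly right. The cancellation of $\xi$ against $\xi+t-\eta$, leaving an integrand of size $O(|\widehat u(\xi+t-\eta)||\widehat h(\eta)|)$ on $[0,t]$, is the same cancellation the paper performs with $\Pi\Im(u'_+)$. The paper must additionally dispose of $\tfrac1t[e^{-itX},\Pi u]L_0h$, a term your unsymmetrized splitting never produces.

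\textbf{Slips in the heuristic part.} Two should be fixed.
\begin{itemize}
\item By \eqref{6015}, $[D_t,L_0]=ie^{-itX}$, not $e^{-itX}$. Hence $[D_t,-i\partial_x^2]h=-(e^{-itX}L_0h+L_0e^{-itX}h)\to-2L_0h$.
\item Multiplication by $u'_+$ is not $\Pi u'$, since $\Pi(u'_-h)\neq0$ in general.
\end{itemize}
Neither slip affects your rigorous step. For $\xi\geq0$ and $0\le\eta\le t$ one has $\xi+t-\eta\ge0$, so $\widehat{u'_+}(\xi+t-\eta)=i(\xi+t-\eta)\widehat u(\xi+t-\eta)$, exactly as you used.
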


\begin{proof}
It is convenient to expand \eqref{-iP} out in the equivalent form
\begin{equation}\label{-iP'} 
-i P_u h = -\partial_x^2 h + i\partial_x \Pi u h + i \Pi u \partial_x h + 2 \Pi \Im (u'_+) h.
\end{equation}

Selfadjointness will follow from the KLMN Theorem, once we demonstrate that $-iP_u$ is an infinitesimally form bounded perturbation of $-iP_0=-\partial_x^2$.  For this purpose, it suffices to show that $-i(P_u-P_0)$ is a Hilbert--Schmidt operator from $H^1$ into $H^{-1}$; the proof of this fact will also demonstrate the estimates \eqref{P is Lip}.

Writing $\mathfrak I_2$ for the Hilbert--Schmidt class of mappings $L^2\to L^2$ and working in Fourier variables, we have 
\begin{align*}
\bigl\| \langle -i\partial_x\rangle^{-1} [P_u-P_0] \langle -i\partial_x\rangle^{-1} \bigr\|_{\mathfrak I_2}^2 
	\lesssim \int_0^\infty \int_0^\infty \frac{[\xi^2+\eta^2]|\widehat u(\xi-\eta)|^2\,d\eta\, d\xi}{\langle \xi \rangle^2\langle \eta\rangle^2}
	\lesssim \| u \|_{L^2}^2.
\end{align*}
The estimates \eqref{P is Lip} now follow easily; indeed, $P_u-P_v = P_{u-v} - P_0$.

It remains to establish \eqref{E:XPu_commutator}. Using \eqref{-iP'}, we may write
\begin{align}\label{1:59}
\bigl[ \tfrac{1 - e^{-itX}}{it} , \, P_u \bigr] 
= \tfrac{1}{t}\big[e^{-itX}, \, i P_u] 
&=  \tfrac{1}{t}\bigl[e^{-itX}, \, \partial_x^2\bigr] +  \tfrac{1}{t}\bigl[e^{-itX}, \, L_0 \Pi u\bigr] \notag\\
&\quad+ \tfrac{1}{t}\bigl[e^{-itX}, \, \Pi u L_0\bigr] - \tfrac{2}{t}\big[e^{-itX}, \, \Pi \Im (u_+') \bigr].
\end{align}
Although $e^{-itX} \partial_x^2$ and $\partial_x^2 e^{-itX}$ only map $H^1_+(\R)$ into $H^{-1}_+(\R)$, a straightforward computation reveals that the commutator behaves better; indeed, for any function $h\in H^1_+(\R)$,
\begin{align}\label{711}
\tfrac{1}{t}\bigl[e^{-itX}, \, \partial_x^2\bigr]h = t e^{-itX}h- 2 e^{-itX}L_0 h \longrightarrow -2L_0h \quad\text{in $L^2$ sense as $t\searrow 0$}.
\end{align}

To handle the second and third terms on the right-hand side of \eqref{1:59}, we use the commutator identity
$$
[A, BC] = [A,B]C + B[A,C]
$$
and \eqref{6015} to write
\begin{equation}
\begin{aligned}\label{712}
\tfrac{1}{t}\bigl[e^{-itX}, \, L_0 \Pi u\bigr] &= e^{-itX} \Pi u + \tfrac{1}{t} L_0 \bigl[e^{-itX}, \,  \Pi u\bigr],\\
\tfrac{1}{t}\bigl[e^{-itX}, \,  \Pi u L_0\bigr] &= \Pi u e^{-itX}  + \tfrac{1}{t}  \bigl[e^{-itX}, \,  \Pi u\bigr]L_0.
\end{aligned}
\end{equation}
For $h\in H^1_+(\R)$ we have
\begin{equation}
\begin{aligned}\label{713}
e^{-itX} \Pi u h+\Pi u e^{-itX}h &\longrightarrow 2\Pi u h \quad\text{in $L^2$ sense as $t\searrow 0$},\\
\tfrac{1}{t}  \bigl[e^{-itX}, \,  \Pi u\bigr] L_0 h &\longrightarrow 0  \quad\text{in $L^2$ sense as $t\searrow 0$},
\end{aligned}
\end{equation}
where we used \eqref{606} in the last line.

Using again \eqref{606}, for $\xi \geq 0$ we have
\begin{align*}
&\Bigl(\tfrac{1}{t} L_0 \bigl[e^{-itX}, \,  \Pi u\bigr]- \tfrac{2}{t}\big[e^{-itX}, \, \Pi \Im(u_+') \bigr] h\Bigr)\widehat{\vphantom{\big|}\ }(\xi)\\
&=\tfrac{1}{t\sqrt{2\pi}} \int_0^t \xi \, \widehat u(\xi+t-\eta) \widehat h(\eta)\,d\eta -\tfrac{2i}{t\sqrt{2\pi}} \int_0^t(\xi+t-\eta)\, \widehat{\Im u_+} (\xi+t-\eta) \widehat h(\eta)\,d\eta\\
&=\tfrac{1}{t\sqrt{2\pi}} \int_0^t \xi \, \widehat u (\xi+t-\eta) \widehat h(\eta)\,d\eta- \tfrac{1}{t\sqrt{2\pi}} \int_0^t (\xi+t-\eta) \, \widehat{u_+} (\xi+t-\eta) \widehat h(\eta)\,d\eta \\
	&\quad +\tfrac{1}{t\sqrt{2\pi}} \int_0^t (\xi+t-\eta) \, \widehat{u_-} (\xi+t-\eta) \widehat h(\eta)\,d\eta\\
&=- \tfrac{1}{t\sqrt{2\pi}} \int_0^t (t-\eta) \, \widehat{u} (\xi+t-\eta) \widehat h(\eta)\,d\eta,
\end{align*}
where the last step is based on Fourier support considerations.  Moreover,
$$
\tfrac{1}{t\sqrt{2\pi}} \int_0^t (t-\eta) \, \widehat{u} (\xi+t-\eta) \widehat h(\eta)\,d\eta \longrightarrow 0 \quad\text{in $L^2$ sense as $t\searrow 0$}.
$$
Finally, collecting \eqref{1:59}, \eqref{711}, \eqref{712}, and \eqref{713}, we deduce
\begin{equation*}
\bigl[ \tfrac{1 - e^{-itX}}{it} , \, P_u \bigr] h \longrightarrow -2L_u h \quad\text{in $L^2$ sense as $t\searrow 0$},
\end{equation*}
which settles \eqref{E:XPu_commutator}.
\end{proof}

As $P_u$ is anti-selfadjoint, it is natural to imagine that the equation $\frac{d}{dt} U = P_u U$ defines a family of unitary operators $U(t)$.  To make this rigorous, we begin by considering this problem for sufficiently smooth solutions $u(t)$ to \eqref{BO}.

\begin{prop}\label{U(t)_u_reg}
Let $u(t)$ be a global $H^3(\mathbb{R})$ solution to the Benjamin--Ono equation. For all $\psi_0 \in L^2_+(\mathbb{R})$, the initial-value problem
\begin{equation}\label{Initial_value_Pb_reg}
\frac{d}{dt}\psi(t) = P_{u(t)}\psi(t) \quad \text{with} \quad \psi(0) = \psi_0
\end{equation}
admits a unique global-in-time $C_t L^2_+$ solution. Moreover, for each $t \in \mathbb{R}$ the mapping $U(t) : \psi_0 \mapsto \psi(t)$ is unitary on $L^2_+(\R)$ and bounded on $H^2_+(\R)$. Furthermore,
\begin{equation}\label{LuU=ULu0}
\Pi u(t) = U(t)\Pi u(0) \quad \text{and} \quad L_{u(t)} U(t)= U(t)L_{u(0)}.
\end{equation}
\end{prop}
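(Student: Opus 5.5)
The plan is to treat \eqref{Initial_value_Pb_reg} as a linear Schr\"odinger-type equation $\partial_t\psi = -i\partial_x^2\psi + B(t)\psi$, where the perturbation
\[
B(t)h := -2\partial_x\Pi(u(t)h)+2u'_+(t)h
\]
is a first-order operator. Its coefficients are smooth in $x$ and continuous in $t$, because $u\in C_tH^3$ and, via the equation, $\partial_t u\in C_tH^1$.

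\emph{Step 1: energy estimates.} For a smooth solution $\psi$, anti-selfadjointness of $P_{u}$ gives $\frac{d}{dt}\|\psi\|_{L^2}^2=2\Re\langle \psi,P_u\psi\rangle=0$. At the $H^2$ level, I would not differentiate $\|\partial_x^2\psi\|^2$ directly, since this produces a loss of derivatives. Instead I would use the modified energy $\|(L_{u(t)}+\kappa)\psi\|_{L^2}^2$, with $\kappa\geq\kappa_0$ uniform on compact time intervals. By \eqref{E:Equiv_norm} this is comparable to $\|\psi\|_{H^1}^2$; to reach $H^2$, use $\|(L_u+\kappa)^2\psi\|^2$, which is comparable to the $H^2$ norm because $u\in H^3$ makes $\Pi u$ a bounded operator on $H^1$. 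The Lax equation \eqref{Lax eq} gives
\[
\tfrac{d}{dt}(L_u+\kappa)^k\psi = [P_u,(L_u+\kappa)^k]\psi + P_u(L_u+\kappa)^k\psi ,
\]
and since $P_u$ is anti-selfadjoint, $\frac{d}{dt}\|(L_u+\kappa)^k\psi\|^2=0$ exactly. This yields uniform $H^2$ bounds (growing at most with $\kappa$ on compact intervals).

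\emph{Step 2: construction.} Existence would come from a regularization, for instance Galerkin truncation or replacing $P_u$ by $P_u$ composed with a Fourier cutoff $\chi(\partial_x/n)$ (done symmetrically to preserve anti-selfadjointness), or a parabolic regularization $-\epsilon\partial_x^4$. The regularized problem is solved by Picard iteration. The uniform $L^2$ and $H^2$ bounds from Step 1 (adapted; I expect this adaptation to be the main obstacle, since the cutoff breaks the exact Lax identity and produces commutator errors, which should be harmless given $u\in H^3$) then give compactness. One passes to the limit to obtain a solution in $C_tL^2_+\cap L^\infty_tH^2_+$ for $H^2$ data. Uniqueness in $C_tL^2_+$ follows from the $L^2$ energy identity applied to the difference; for merely $C_tL^2$ solutions, this requires justification by duality, namely testing against smooth backward solutions of the adjoint problem, which is the same equation. $L^2$-isometry plus density extends $U(t)$ to all of $L^2_+$. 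Surjectivity, hence unitarity, follows by solving backward from time $t$.

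\emph{Step 3: identities.} The first identity in \eqref{LuU=ULu0} follows from \eqref{u P eqn}: $\Pi u(t)\in C_tH^2_+$ solves \eqref{Initial_value_Pb_reg}, so uniqueness gives $\Pi u(t)=U(t)\Pi u(0)$. For the second, take $\psi_0\in H^2_+$ and set
\[
\phi(t)=L_{u(t)}U(t)\psi_0-U(t)L_{u(0)}\psi_0 .
\]
By \eqref{Lax eq}, $\partial_t(L_uU\psi_0)=[P_u,L_u]U\psi_0+L_uP_uU\psi_0=P_u(L_uU\psi_0)$, so both terms solve \eqref{Initial_value_Pb_reg} with the same data. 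Uniqueness gives $\phi\equiv0$, and the identity extends to $\D(L_{u(0)})=H^1_+$ by closedness.
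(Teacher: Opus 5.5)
Your proposal is correct and follows essentially the paper's route. You regularize $P_u$, obtain $\eps$-uniform $L^2$ and $H^2$ bounds, pass to the limit, and prove uniqueness by duality with the time-reversed flow. You get unitarity by solving backward, and both identities in \eqref{LuU=ULu0} from \eqref{u P eqn}, the Lax equation, and uniqueness. The paper differs only in technique: it regularizes just the first-order part, via $\partial_x(1-\eps^2\partial_x^2)^{-1}$, and gets strong $C_tL^2$ convergence by comparing two regularizations rather than by compactness.

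One correction. Differentiating $\|\partial_x^2\psi\|_{L^2}^2$ directly does not lose derivatives. The term $\Re\langle \partial_x^2\psi, P_u\partial_x^2\psi\rangle$ vanishes, and $[\partial_x^2,P_u]$ is only second order with coefficients controlled by $\|u\|_{H^3}$. So the paper's plain Gronwall argument already handles the regularized problem, and the step you flagged as the main obstacle disappears.
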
 

\begin{proof}
We begin by recalling that an $H^\infty(\R)$ solution to \eqref{BO} satisfies
\begin{align}\label{908}
\|u\|_{L^\infty_tH^\sigma_x(\R\times\R)}\leq C(\|u(0)\|_{H^\sigma}) 
\end{align}
for all $\sigma\in \tfrac12 \N$. This follows from the conservation laws associated with the Benjamin--Ono equation and the argument of Lax \cite{Lax1968}. Correspondingly, an $H^3(\R)$ solution to \eqref{BO} satisfies \eqref{908} for $\sigma \in \{0,\frac12,1,\frac32,2,\frac52,3\}$.

As in \eqref{-iP'}, we recast $P_u$ as
$$
P_u = -i\partial_x^2 - (\partial_x \Pi u + \Pi u \partial_x) + 2i \Pi \Im(u'_+).
$$
It is easy to verify that
\begin{align}\label{P mapping}
\|P_u\|_{H^2_+\to L^2_+} = \|P_u\|_{L^2_+\to H^{-2}_+}\lesssim 1+ \|u\|_{H^1}.
\end{align}

To construct solutions to \eqref{Initial_value_Pb_reg}, we first consider the regularized initial-value problem
\begin{align}\label{510}
    \frac{d}{dt}\psi_\varepsilon(t) = P_{u(t)}^\varepsilon \psi_\varepsilon(t) \quad \text{with} \quad \psi_\varepsilon(0) = \psi_0,
\end{align}
where $0<\eps\leq 1$ and
$$
P_u^\eps:=  -i\partial_x^2 + N_{u}^\eps \qtq{with} N_u^\eps:= -\tfrac{\partial_x}{1-\eps^2\partial_x^2} \Pi u - \Pi u \tfrac{\partial_x}{1-\eps^2\partial_x^2}
	+ 2i \Pi \Im(u'_+).
$$

The operator $N_{u}^\varepsilon$ is bounded on $L^2_+(\mathbb{R})$; indeed,
\begin{align}\label{1133}
\|N_u^\eps\|_{L^2_+\to L^2_+} \lesssim \eps^{-1} \| u\|_{L^\infty} + \|u'\|_{L^\infty} \lesssim \eps^{-1} \| u\|_{H^2}.
\end{align}
Consequently, using Duhamel's formula
\begin{equation} \label{Duhamel eps}
    \psi_\varepsilon(t) = e^{-it\partial_x^2} \psi_0 + \int_{0}^t e^{-i(t-s)\partial_x^2} N_{u(s)}^\varepsilon \psi_\varepsilon(s) \, ds
\end{equation}
and employing contraction mapping, we can easily construct a unique solution $\psi_\eps\in C_t L^2_+([-T, T]\times\R)$ with $T=T(\eps,\|u(0)\|_{H^2})$.  In this step, we also made use of \eqref{908} with $\sigma=2$ to provide uniform-in-time control in \eqref{1133}.

To extend this local-in-time solution to a global-in-time solution, we first observe that the map
$$
  t\mapsto  e^{it\partial_x^2}\psi_\varepsilon(t) =  \psi_0 +\int_{0}^t  e^{is\partial_x^2} N_{u(s)}^\varepsilon \psi_\varepsilon(s) \, ds \qtq{is} C_t^1L_x^2((-T, T)\times\R)
$$
with
$$
\frac{d}{dt} \bigl[e^{it\partial_x^2}\psi_\varepsilon(t)\bigr] = e^{it\partial_x^2} N_{u(t)}^\varepsilon \psi_\varepsilon(t).
$$
This ensures that the $L^2$ norm of the solution $\psi_\eps$ is conserved in time:
$$
\frac{d}{dt}\| \psi_\varepsilon(t)\|_{L^2}^2 =\frac{d}{dt}\| e^{it\partial_x^2} \psi_\varepsilon(t)\|_{L^2}^2 =2\Re \langle  \psi_\varepsilon(t),  N_{u(t)}^\varepsilon \psi_\varepsilon(t)\rangle= 0,
$$
where in the last step we used that $N_u^\eps$ is anti-selfadjoint. The conservation of the $L^2$ norm makes it trivial to iterate the local well-posedness arguments to construct a \emph{global} solution to \eqref{510}.

The conservation of the $L^2$ norm also guarantees that the flow maps $U_\eps(t):=U_\varepsilon(t;0):\psi(0) \mapsto \psi_\eps(t)$ are isometries. Moreover, the uniqueness of solutions provided by contraction mapping implies that $U_\eps(t;0)U_\eps(0;t) = \operatorname{Id}$ for all $t\in \R$.  Therefore, $U_\eps(t)$ is unitary on $L^2_+(\R)$.

A standard Gronwall argument yields that the solution $\psi_\eps$ to \eqref{510} satisfies
\begin{equation}\label{Psi-H2-control}
\sup_{\eps>0}\, \sup_{|t|\leq T}\, \|\psi_\varepsilon(t)\|_{H^n} \leq C(\|u(0)\|_{H^3}, T)\|\psi_0\|_{H^n} \qtq{for} n=1,2,
\end{equation}
uniformly in $\eps>0$.  To complete this step, we require the solution $u$ to \eqref{BO} to be in $H^3(\R)$.

Next, we compare different regularizations, beginning with the case $\psi_0 \in H^2_+(\R)$. Noting that the operators $N^\eta_{u}$ and $N^\eps_u$ are anti-selfadjoint, we obtain
\begin{equation}\label{E:derivative_diff}
\frac{d}{dt} \|\psi_\eta(t) - \psi_\varepsilon(t)\|_{L^2}^2 = \operatorname{Re}\, \bigl\langle \psi_\eta(t) - \psi_\varepsilon(t), \big(N_{u(t)}^\eta - N_{u(t)}^\varepsilon\bigr) \bigl(\psi_\eta(t)+ \psi_\varepsilon(t) \bigr)\bigr\rangle
\end{equation}
for any $0 < \varepsilon < \eta$.  To bound the right-hand side here, we note that
\begin{align}\label{diff N}
\bigl\|\bigl[N_{u}^\eta - N_{u}^\varepsilon\bigr]f\bigr\|_{L^2}
&\lesssim \bigl\|\tfrac{(\eta^2-\eps^2)\partial_x^3}{(1-\eta^2\partial_x^2)(1-\eps^2\partial_x^2)} \Pi u f\bigr\|_{L^2} + \bigl\|\Pi u\tfrac{(\eta^2-\eps^2)\partial_x^3}{(1-\eta^2\partial_x^2)(1-\eps^2\partial_x^2)} f\bigr\|_{L^2}\notag\\
&\lesssim \eta \| u\|_{H^2} \|f\|_{H^2}.
\end{align}

For any $T>0$, using \eqref{diff N} and \eqref{Psi-H2-control} in \eqref{E:derivative_diff} we may bound
\begin{equation}\label{E:derivative_diff_2}
    \frac{d}{dt} \|\psi_\eta(t) - \psi_\varepsilon(t)\|_{L^2}\leq \eta C(\|u(0)\|_{H^3}, T)\| \psi_0\|_{H^2},
\end{equation}
uniformly for $|t|\leq T$. An application of the Gronwall inequality then yields that $\psi_\varepsilon$ converges in $C_tL^2_+([-T, T]\times\R)$ as $\varepsilon \to 0$ for any choice of $T > 0$ when $\psi_0 \in H^2_+(\R)$.

As $U_\varepsilon(t)$  is unitary and $H^2_+(\R)$ is dense in $L^2_+(\R)$, this convergence extends to every $\psi_0 \in L^2_+(\R)$. Taking $\eps\to0$ in \eqref{Duhamel eps} shows that the limiting function $\psi(t)$ satisfies the Duhamel formula corresponding to the initial-value problem \eqref{Initial_value_Pb_reg},
$$
\psi(t) = e^{-it\partial_x^2} \psi_0 + \int_{0}^t e^{-i(t-s)\partial_x^2} \bigl[ - (\partial_x \Pi u(s) + \Pi u(s) \partial_x)
	+ 2i \Pi \Im \bigl(u'_+(s)\bigr)\bigr] \psi(s) \, ds
$$
in $C_t H^{-1}_+$ sense.  Moreover, this solution inherits the conservation of $L^2$ norm from the regularized solutions $\psi_\eps$.

Sending $\varepsilon \to 0$ in $U_\eps(t;0)U_\eps(0;t) = \operatorname{Id}$ we find that the limiting operator $U(t)$ is unitary for all $t\in \R$. As the solution also inherits the persistence of regularity bounds \eqref{Psi-H2-control}, we see that $U(t)$ is bounded on $H^2_+(\R)$. Finally, uniqueness of the solution to \eqref{Initial_value_Pb_reg} follows directly from duality and the solvability of the time-reversed flow. 

 We now turn to \eqref{LuU=ULu0}.  The first identity in \eqref{LuU=ULu0} follows from \eqref{u P eqn} and the uniqueness of solutions to \eqref{Initial_value_Pb_reg}.
Moreover, in view of \eqref{P is Lip} and \eqref{P mapping}, for any $\varphi \in H^1_+(\mathbb{R})$ the function
$$
\psi(t)=\bigl[L_{u(t)} U(t) - U(t)L_{u(0)}\bigr]\varphi \in C_tL^2_+\cap C_t^1H^{-2}_+
$$
is a solution to the initial-value problem \eqref{Initial_value_Pb_reg} with $\psi(0) = 0$. The uniqueness of solutions to \eqref{Initial_value_Pb_reg} then also yields the second identity in \eqref{LuU=ULu0}.
\end{proof}

Our next result extends both the definition and the remarkable properties of the operator $U(t)$ from the setting of $H^3(\R)$ solutions to \eqref{BO} to that of merely $L^2(\R)$ solutions to \eqref{BO}.

\begin{prop}\label{P:Unitary_Propagator}
Fix $u_0 \in L^2(\mathbb{R})$ and let $u(t)$ denote the solution to \eqref{BO} with initial data $u(0)=u_0$. There exists a strongly continuous family of unitary operators $U(t)$ on the Hardy space $L^2_+(\mathbb{R})$ satisfying the following properties:\\[1mm]
{\upshape (i)} $U(0)=\operatorname{Id}$ and $\frac{d}{dt} U(t) = P_{u(t)}U(t)$ as operators from $H^1_+(\mathbb{R})$ to $H^{-1}_+(\mathbb{R})$.\\[1mm]
{\upshape (ii)} For all $t \in \mathbb{R}$ we have $L_{u(t)}U(t) = U(t)L_{u_0}$ as operators from $H^1_+(\mathbb{R})$ to $L^2_+(\mathbb{R})$.\\[1mm]
{\upshape (iii)} The solution to \eqref{BO} satisfies $ \Pi u(t) = U(t)\Pi u_0$.\\[1mm]
{\upshape (iv)} For any $s \in [-1, 1]$, the operator $U(t)$ maps $H^s_+(\mathbb{R})$ boundedly into $H^s_+(\mathbb{R})$.\\[1mm]
{\upshape (v)} If $v_n\to u_0$ in $L^2(\R)$, then the corresponding unitary operators satisfy $U_n(t)\to U(t)$ strongly on $L^2(\R)$.\\[1mm]
{\upshape (vi)} If $\psi_0 \in H^1_+(\R)$ and $\langle x\rangle \psi_0\in L^2(\R)$, then $\langle x\rangle U(t) \psi_0\in L^2(\R)$ for all $t\in \R$.
\end{prop}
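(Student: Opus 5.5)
The plan is to build $U(t)$ by approximation from the smooth case, Proposition~\ref{U(t)_u_reg}. Take $H^3$ data $u_{0,n}\to u_0$ in $L^2$. By Theorem~\ref{T:KLV main} (well-posedness in $L^2$), the solutions $u_n(t)\to u(t)$ in $C_tL^2$ on compact time intervals. Let $U_n(t)$ be the unitary propagators from Proposition~\ref{U(t)_u_reg}.

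\textbf{Uniform $H^{\pm1}$ bounds.} The key input is the intertwining $L_{u_n(t)}U_n(t)=U_n(t)L_{u_{0,n}}$. Fix $\kappa\ge\kappa_0$ uniformly in $n$, which is possible since $\|u_n(t)\|_{L^2}$ is conserved. Then for $\sigma\in[-1,1]$,
\[
(L_{u_n(t)}+\kappa)^\sigma U_n(t)=U_n(t)(L_{u_{0,n}}+\kappa)^\sigma .
\]
Combining this with \eqref{E:Equiv_norm} and unitarity gives $\|U_n(t)\|_{H^s_+\to H^s_+}\lesssim 1$ for $s\in[-1,1]$, uniformly in $n$ and $t$. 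This is already property (iv) at the level of the approximants.

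\textbf{Convergence and (i)--(iv).} The crux is showing that $U_n(t)\psi_0$ is Cauchy in $C_tL^2$ on compact time intervals. For $\psi_0\in H^1_+$, set $w=U_n\psi_0-U_m\psi_0$. Then
\[
\partial_t w=P_{u_n}w+(P_{u_n}-P_{u_m})U_m\psi_0 .
\]
Since $P_{u_n}$ is anti-selfadjoint, pairing with $w$ gives
\[
\tfrac{d}{dt}\|w\|_{L^2}^2=2\Re\langle w,(P_{u_n}-P_{u_m})U_m\psi_0\rangle .
\]
By \eqref{P is Lip} and the uniform $H^1$ bounds, the right-hand side is at most
\[
\|w\|_{H^1}\,\|u_n-u_m\|_{L^2}\,\|U_m\psi_0\|_{H^1}\lesssim \|u_n-u_m\|_{L^2}\|\psi_0\|_{H^1}^2 .
\]
Integrating in time yields the Cauchy property. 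Density and unitarity then extend convergence to all $\psi_0\in L^2_+$, and the limit $U(t)$ is an isometry; surjectivity follows by running the same argument on the adjoints, i.e.\ the backward flow.

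The remaining properties pass to the limit as follows.
\begin{itemize}
\item The $H^s$ bounds survive by weak lower semicontinuity.
\item (ii) follows by passing to the limit in the resolvent form $(L_{u_n(t)}-z)^{-1}U_n=U_n(L_{u_{0,n}}-z)^{-1}$, using \eqref{E:Lipschitz z}.
\item (iii) follows since $\Pi u_n(t)\to\Pi u(t)$ in $L^2$.
\item (i) follows in integrated form, $U(t)\psi_0=\psi_0+\int_0^tP_{u(s)}U(s)\psi_0\,ds$ in $H^{-1}$, from \eqref{P is Lip} and the uniform $H^1$ bounds; continuity of the integrand in $H^{-1}$ then gives the derivative.
\item Uniqueness for the limiting equation follows by the duality argument used in Proposition~\ref{U(t)_u_reg}. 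This makes $U(t)$ independent of the approximating sequence.
\item (v) follows from the same energy estimate, now comparing $u$ with a general sequence $v_n\to u_0$.
\end{itemize}

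\textbf{The weighted statement (vi).} This is where Lemma~\ref{L:weights} enters, and I expect it to be the main obstacle. Use the bounded weights $\omega_\eps$ and compute
\[
\tfrac{d}{dt}\|\omega_\eps U\psi_0\|_{L^2}^2=2\Re\langle\omega_\eps\psi,[\omega_\eps,P_u]\psi\rangle,\qquad \psi=U(t)\psi_0 .
\]
First, $[\omega_\eps,-i\partial_x^2]$ involves $\omega_\eps'$, which is bounded uniformly in $\eps$, and $\omega_\eps''$. This term is controlled by $\|\psi\|_{H^1}$, with the $\partial_x\psi$ paired against $\omega_\eps\psi$. Second, the terms $[\omega_\eps,\partial_x\Pi u]$ and $[\omega_\eps,\Pi u\,\partial_x]$ split into $\partial_x[\omega_\eps,\Pi u]$, which is bounded by $\sqrt\eps\|u\|\|\psi\|$ via Lemma~\ref{L:weights}, plus pieces involving $\omega_\eps'$. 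Third, the term $u'_+$ must be rewritten as $\partial_x$ of a product so that the same commutator lemma applies.

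This yields $\tfrac{d}{dt}\|\omega_\eps\psi\|^2\lesssim\|\omega_\eps\psi\|\,\|\psi\|_{H^1}+\|\psi\|^2$. Gronwall then gives bounds uniform in $\eps$, and letting $\eps\to0$ gives $\langle x\rangle\psi\in L^2$.

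Rigorously, one performs this computation for the smooth approximants $U_n$ and passes to the limit. The delicate points are justifying these manipulations and handling the $u'_+$ term with only $L^2$ control of $u$.
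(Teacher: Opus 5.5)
Your construction of $U(t)$ and your arguments for (i)--(v) are sound and follow the paper's route: $H^3$ approximation, uniform $H^{\pm1}$ bounds from the intertwining relation and \eqref{E:Equiv_norm}, and a Cauchy estimate driven by \eqref{P is Lip}. The paper phrases the Cauchy step as the operator identity $U_n(t)-U_m(t)=\int_0^t U_m(t)U_m(s)^*[P_{u_n(s)}-P_{u_m(s)}]U_n(s)\,ds$ in $\mathcal B(H^1_+,H^{-1}_+)$, and then interpolates against the uniform $H^1$ bound. Your $L^2$ energy estimate is an equivalent variant, and it also gives strong continuity in $t$ directly. (For the derivative in (i) you need $s\mapsto U(s)\psi_0$ to be continuous in $H^1$, not merely bounded there. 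This follows from $U(s)\psi_0=(L_{u(s)}+\kappa)^{-1}U(s)(L_{u_0}+\kappa)\psi_0$ and \eqref{E:Lipschitz z}.)

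The gap is in (vi). You work with the symmetrized form \eqref{-iP'} and assert that both $[\omega_\eps,\partial_x\Pi u]$ and $[\omega_\eps,\Pi u\,\partial_x]$ reduce to $\partial_x[\omega_\eps,\Pi u]$ plus $\omega_\eps'$-terms. This holds for the first, but
\[
[\omega_\eps,\Pi u\,\partial_x]=[\omega_\eps,\Pi u]\,\partial_x-\Pi u\,\omega_\eps',
\]
and with the derivative on the wrong side of the commutator, Lemma~\ref{L:weights} no longer applies. Indeed, the rank-one formula from the proof of that lemma gives
\[
\bigl\|[\omega_\eps,\Pi u]\,\partial_x h\bigr\|_{L^2}=\tfrac{1-\eps}{2\pi}\sqrt{\pi/\eps}\,\Bigl|\int\tfrac{u(y)h'(y)}{\eps y+i}\,dy\Bigr|,
\]
which behaves like $(4\pi\eps)^{-1/2}|\langle u_+,h'\rangle|$ as $\eps\to0$. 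So this term alone destroys the uniformity in $\eps$ that the limit $\eps\to0$ requires.

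It is cancelled only by the matching piece of the $2i\Pi\Im(u_+')$ term. Writing $\Pi(u_-'h)=\partial_x\Pi(u_-h)-\Pi(u_-h')$, the two bad pieces combine into $-[\omega_\eps,\Pi u_+\partial_x]=-[\omega_\eps,u_+\partial_x]=u_+\omega_\eps'$, which is harmless. Your sketch treats the terms separately and leaves the $u_+'$ term as an unresolved ``delicate point''; that term is precisely where this cancellation lives.

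The paper sidesteps all of this by using the unsymmetrized form \eqref{-iP}, $P_u=-i\partial_x^2-2\partial_x\Pi u+2u_+'$. Multiplication by $\omega_\eps$ preserves $H^1_+$, since $\omega_\eps$ is bounded and holomorphic on $\C_+$. Since $u_+'$ also acts by multiplication, $[\omega_\eps,u_+']=0$, whence
\[
[\omega_\eps,P_u]=i(\omega_\eps''+2\omega_\eps'\partial_x)+2\omega_\eps'\Pi u-2\partial_x[\omega_\eps,\Pi u].
\]
Every term here is bounded from $H^1_+$ to $L^2$ uniformly in $0<\eps\le1$, the last by Lemma~\ref{L:weights}. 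This gives $\tfrac{d}{dt}\|\omega_\eps\psi\|_{L^2}^2\lesssim\|\omega_\eps\psi\|_{L^2}\|u_0\|_{L^2}\|\psi\|_{H^1}$, and the uniform $H^1$ bound on $U(t)$ then lets you send $\eps\to0$. With this substitution, the rest of your plan for (vi) goes through, including doing the computation at the level of $U_n$ and passing to the limit.
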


\begin{proof}
Choose $u_{0,n}\in H^3(\mathbb{R})$ such that
\begin{align*}
u_{0,n} \longrightarrow u_0 \qtq{in $L^2(\mathbb{R})$} \qtq{with} \ \ \sup_{n\geq 1}\,\|u_{0,n}\|_{L^2}\leq2 \|u_0\|_{L^2}.
\end{align*}
Let $u_n$ denote the $H^3$ solutions to \eqref{BO} with data $u_n(0)=u_{0,n}$. The well-posedness theory for \eqref{BO} guarantees that $u_n\to u$ in $C_t^{ }L^2_x([-T,T]\times\R)$ for each $T>0$.

Applying Proposition~\ref{U(t)_u_reg} with $u=u_n$ for each $n\geq 1$, we find unitary operators $U_n(t)$ satisfying the corresponding properties (i) through (iv). 

We begin by establishing the \emph{uniform} boundedness of $U_n(t)$ on $H^1_+(\mathbb{R})$. As $U_n(t):H^1_+(\R) \to H^1_+(\R)$ and satisfy property (ii), we have
$$
(L_{u_n(t)} + \kappa) U_n(t) = U_n(t) (L_{u_{0,n}} + \kappa).
$$
Using \eqref{E:Equiv_norm} with $\kappa$ large, depending only on $\|u_0\|_{L^2}$, together with the unitarity of $U_n(t)$ on $L^2_+(\mathbb{R})$, for each $f \in H^1_+(\mathbb{R})$ we may bound
\begin{align*}
\| U_n & (t) f \|_{H^1}\\
&\lesssim \| (L_{0} + \kappa) U_n(t) f \|_{L^2}\approx \| (L_{u_n(t)} + \kappa) U_n(t) f \|_{L^2} = \| U_n(t) (L_{u_{0,n}} + \kappa) f \|_{L^2}\\
& \qquad\qquad \qquad \qquad \qquad\qquad \qquad \qquad\qquad \qquad \quad \ \,  = \| (L_{u_{0,n}} + \kappa) f \|_{L^2} \lesssim \|f\|_{H^1}
\end{align*}
uniformly for $t\in\R$ and $n\geq 1$. By the usual duality argument, we deduce
\begin{equation}\label{E:Un_uniformly_bounded_H1}
\sup_n \,\sup_{t\in\R}\, \bigl[\| U_n(t) \|_{H^1_+ \to H^1_+} + \| U_n(t) \|_{H^{-1}_+ \to H^{-1}_+ }\bigr] \leq C(\|u_0\|_{L^2}).
\end{equation}

Next, we prove that $U_n(t)$ is a Cauchy in $\mathcal{B}(H^1_+(\R), H^{-1}_+(\R))$. Differentiating the operator product $U_m(t)^* U_n(t)$ and integrating from $0$ to $t$, we find
\begin{equation*}
    U_n(t) - U_m(t) = \int_0^t U_m(t) U_m(s)^* \bigl[ P_{u_n(s)} - P_{u_m(s)} \bigr] U_n(s) \, ds.
\end{equation*}
Using \eqref{P is Lip} and \eqref{E:Un_uniformly_bounded_H1}, we may bound
\begin{align*}
&\bigl\| U_m(t)U_m (s)^*[P_{u_n(s)} - P_{u_m(s)}] U_n(s)\bigr \|_{H^{1}_+ \to H^{-1}_+} \\
&\le\| U_m(t) \|_{H^{-1}_+ \to H^{-1}_+} \| U_m(s)^* \|_{H^{-1}_+ \to H^{-1}_+} \| P_{u_n(s)} - P_{u_m(s)} \|_{H^1_+ \to H^{-1}_+} \| U_n(s) \|_{H^1_+ \to H^1_+} \\
& \lesssim \| u_n (s)- u_m(s) \|_{L^2},
\end{align*}
where the implicit constant depends only on the $L^2(\R)$ norm of $u_0$. Consequently, for each $T>0$ and $|t|\leq T$ we have
\begin{align*}
\| U_n(t) - U_m(t) \|_{H^1_+ \to H^{-1}_+} &\lesssim T\, \|u_n-u_m\|_{L^\infty_t L_x^2([-T,T]\times\R)} \longrightarrow 0 \qtq{as} n,m\to \infty.
\end{align*}

Let $U(t)$ denote the limit of $U_n(t)$ in $\mathcal{B}(H^1_+(\R), H^{-1}_+(\R))$.  Using \eqref{E:Un_uniformly_bounded_H1}, we see that $U_n(t)f\rightharpoonup U(t)f$ in $H^1_+(\R)$ for each $f\in H^1_+(\R)$, as well as
\begin{align}\label{1228}
\sup_{t\in\R}\, \bigl[\| U(t) \|_{H^1_+ \to H^1_+} + \| U(t) \|_{H^{-1}_+ \to H^{-1}_+ }\bigr] \leq C(\|u_0\|_{L^2}).
\end{align}
Thus, taking $n\to \infty$ in the identity
\[
U_n(t)=\int_0^tP_{u_n(s)} U_n(s) \, ds,
\]
and invoking \eqref{P is Lip}, we find
$$
U(t)=\int_0^tP_{u(s)} U(s) \,ds \qtq{in}  \mathcal{B}(H^1_+(\R), H^{-1}_+(\R)),
$$
which proves property (i).

Next we prove that $U(t)$ is unitary on $L^2_+(\R)$. For any $f \in H^1_+(\R)$, we write
\begin{equation*}
    \|U(t)f\|_{L^2}^2 - \|U_n(t)f\|_{L^2}^2 = \langle (U(t)-U_n(t))f, U(t)f \rangle + \langle U_n(t)f, (U(t)-U_n(t))f \rangle.
\end{equation*}
Using \eqref{E:Un_uniformly_bounded_H1} and \eqref{1228}, we may thus bound
\begin{align*}
\bigl|\|U(t)f\|_{L^2}^2 &- \|U_n(t)f\|_{L^2}^2 \bigr|\\
&\le \| U(t)\!-U_n(t) \|_{H^{-1}_+ \to H^1_+}\bigl[\| U_n(t) \|_{H^1_+\to H^1_+} \! +\| U(t) \|_{H^1_+\to H^1_+} \bigr]  \|f \|_{H^{1}}^2,
\end{align*}
which converges to zero as $n\to \infty$. As $U_n(t)$ is unitary on $L^2_+(\R)$, we deduce that
$$
\|U(t)f\|_{L^2} = \|f\|_{L^2} \qtq{for all} f \in H^1_+(\R).
$$
Using this and the density of $H^1_+(\R)$ in $L^2_+(\R)$, we see that $U(t)$ extends to an isometry on $L^2_+(\R)$.  Employing a parallel argument for $U_n(t)^*$ yields that $U(t)$ is unitary on $L^2_+(\R)$.

Next, using that $U_n(t)$ converges to $U(t)$ in $\mathcal{B}(H^1_+(\R), H^{-1}_+(\R))$ together with \eqref{E:Un_uniformly_bounded_H1}, we see that for all $f\in H^1_+(\R)$ we have 
\begin{align}\label{107}
U_n(t) f\longrightarrow U(t)f \qtq{in} L^2_+(\R) \qtq{as} n\to \infty.
\end{align}
Combining this with the density of $H^1_+(\R)$ in $L^2_+(\R)$ and the unitarity of $U_n(t)$ and $U(t)$ on $L^2_+(\R)$, we find 
\begin{align}\label{108}
U_n(t) f\longrightarrow U(t)f \qtq{in} L^2_+(\R) \qtq{as} n\to \infty
\end{align}
for any $f\in L^2_+(\R)$.

Employing \eqref{107} and \eqref{108} and taking $n\to \infty$ in the identities
$$
L_{u_n(t)} U_n(t) = U_n(t) L_{u_{0,n}}  \qtq{and} \Pi u_n(t) = U_n(t) \Pi u_{0,n}
$$
yields properties (ii) and (iii).  Note that the convergence of the Lax operators is guaranteed by \eqref{E:Lipschitz z}.  Property (iv) follows by interpolation between the unitarity of $U(t)$ on $L^2_+(\R)$ and the bounds in \eqref{1228}. Property (v) is a direct consequence of our construction of $U(t)$ on $L^2_+(\R)$ via smooth approximation; see~\eqref{108}.

It remains to prove property (vi).  To this end, we fix $\psi_0\in H^1_+(\R)$ satisfying $\langle x\rangle \psi_0\in L^2(\R)$.  As $U(t)$ is bounded on $H^1_+(\R)$, $\psi(t):=U(t)\psi_0$ satisfies
\begin{align}\label{124}
\sup_{|t|\leq T} \|\psi(t)\|_{H^1} \lesssim_T \|\psi_0\|_{H^1} \qtq{for each} T>0.
\end{align}

Given $0<\eps\leq 1$, we consider the family of weights
$$
\omega_\eps(x):= \tfrac{x+i}{\eps x+i}.
$$
Using the anti-selfadjointness of $P_u$, we compute
\begin{align*}
\tfrac{d}{dt} \|\omega_\eps \psi(t)\|_{L^2}^2 = 2 \Re \langle\omega_\eps \psi(t),  \omega_\eps P_{u(t)} \psi(t) \rangle  = 2\Re \langle\omega_\eps \psi(t),  [\omega_\eps, P_{u(t)}] \psi(t) \rangle.
\end{align*}
Employing \eqref{-iP}, we may write
\begin{align*}
 [\omega_\eps, P_{u}] =  [\omega_\eps, -i \partial_x^2] -  [\omega_\eps, 2\partial_x \Pi u]  
&= i( \omega_\eps''+ 2\omega_\eps'\partial_x) - 2 [\omega_\eps, \partial_x] \Pi u - 2\partial_x [\omega_\eps, \Pi u]\\
&= i( \omega_\eps''+ 2\omega_\eps'\partial_x) + 2 \omega_\eps' \Pi u - 2\partial_x [\omega_\eps, \Pi u].
\end{align*}
By Lemma~\ref{L:weights} and the conservation of $L^2(\R)$ norm by \eqref{BO} solutions, we may bound
\begin{align*}
\|[\omega_\eps, P_{u(t)}] \psi(t)\|_{L^2}\lesssim \bigl[\|\omega_\eps''\|_{L^\infty} \!+\! \|\omega_\eps'\|_{L^\infty} \!+\!1\bigr] \|u(t)\|_{L^2} \|\psi(t)\|_{H^1}\lesssim \|u(0)\|_{L^2} \|\psi(t)\|_{H^1}
\end{align*}
and so 
\begin{align*}
\tfrac{d}{dt} \|\omega_\eps \psi(t)\|_{L^2}^2 \lesssim \|\omega_\eps \psi(t)\|_{L^2} \|u(0)\|_{L^2} \|\psi(t)\|_{H^1}
\end{align*}
uniformly for $0<\eps\leq 1$. Combining this with \eqref{124}, we find
$$
\sup_{|t|\leq T} \|\omega_\eps \psi(t)\|_{L^2} \lesssim \|\langle x\rangle \psi_0\|_{L^2} + T\|u(0)\|_{L^2}\|\psi_0\|_{H^1}, \qtq{uniformly for } 0<\eps\leq 1.
$$
Property (vi) follows by sending $\eps\to 0$.
\end{proof}

\section{The explicit formula}\label{S:EFG}

The explicit formula for \eqref{BO} reads as follows:
\begin{equation}\label{EFG}
   u_+(t, z) = \tfrac{1}{2\pi i} I_+ \Bigl( (X - 2t L_{u_0} - z)^{-1} \Pi u_{0} \Bigr), \qtq{for all} z \in \C_+.
\end{equation}
This was first discovered by G\'erard in \cite{MR4662323}, where it was shown to hold for solutions belonging to $H^2(\R)$.
Subsequently, the case of $L^2(\R)$ solutions was discussed in \cite{chen2025explicit}.

Following \cite{GGM26}, we will need to consider what becomes of the explicit formula when one replaces the vector $\Pi u_0$ by something else, while still retaining the Lax operator associated to $u_0$.  This already requires us to revisit the derivation of this formula.  Secondly, we employ a different Peter operator than \cite{chen2025explicit, GGM26,MR4662323}; this leads to a different unitary flow and so a different generalization of \eqref{EFG}.

Naturally, the first step is to understand the operator $X - 2t L_{u}$, starting with the case $u\equiv 0$.  As observed in \cite{KLV2025}, for example,
$X - 2tL_0$ is the generator of the contraction semigroup
\begin{equation}\label{}
s\mapsto e^{-is(X - 2tL_0)} =   e^{it\partial^2} e^{-isX} e^{-it\partial^2} 
\end{equation}
and correspondingly,
\begin{equation}\label{A0 4}
\D(X - 2tL_0) = \{ h \in L^2_+(\R) : e^{it\xi^2}\widehat h(\xi) \in H^1([0,\infty))\}.
\end{equation}
A more significant result from \cite{KLV2025} is the estimate
\begin{align}\label{A0 5}
\bigl\| (X-2tL_0-z)^{-1} \bigr\|_{L^2_+ \to L^\infty_+} &\lesssim (|t| \Im z)^{-\frac12}, \quad\text{for $\Im z>0$ and $t\neq0$,}
\end{align} 
which will allow us to understand $X - 2t L_{u}$ as a maximally accretive operator and prove the conjugation property \eqref{A0U}:

\begin{lemma}\label{L:A0U}
For any $u\in L^2(\R)$ and $t\in\R$, the operator $X-2tL_{u}$ is maximally accretive on the domain \eqref{A0 4}.  Moreover, if
$u(t)$ is a solution to \eqref{BO} with initial data $u(0)\in L^2(\R)$ and $U(t)$ denotes the family of unitary operators constructed in Proposition~\ref{P:Unitary_Propagator}, then
\begin{gather}\label{A0U}
U(t) : \D(X - 2tL_0) \to \D(X) \qtq{and} X U(t) = U(t) \bigl(X-2tL_{u(0)}\bigr) . 
\end{gather}
\end{lemma}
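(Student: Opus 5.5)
Two claims: maximal accretivity of $X-2tL_u$ on $\D(X-2tL_0)$, and the conjugation identity \eqref{A0U}. Both will be obtained by perturbing off the free operator $X-2tL_0$, whose semigroup is explicit, and then approximating general $L^2$ data by smooth data.

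\emph{Maximal accretivity.} Write $X-2tL_u = (X-2tL_0) + 2t\,\CS u$ (as $L_u=L_0-\CS u$). We know $X-2tL_0$ is maximally accretive on \eqref{A0 4}, being the generator of the contraction semigroup $e^{it\partial^2}e^{-isX}e^{-it\partial^2}$. Since $2t\CS u$ is symmetric, it does not affect $\Re\langle h,(X-2tL_u)h\rangle$, so accretivity on $\D(X-2tL_0)$ is immediate. For maximality it suffices to show that $X-2tL_u-z$ is onto for some $z$ with $\Im z>0$ (recall $-i$ times an accretive generator here corresponds to spectrum in the closed lower half-plane, so that $(X-2tL_0-z)^{-1}$ exists for $\Im z>0$). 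We factor
\[
X-2tL_u-z = \bigl[1+2t\,\CS u\,(X-2tL_0-z)^{-1}\bigr](X-2tL_0-z).
\]
By \eqref{A0 5} and H\"older, $\|\CS u (X-2tL_0-z)^{-1}\|_{L^2_+\to L^2_+}\le \|u\|_{L^2}\|(X-2tL_0-z)^{-1}\|_{L^2_+\to L^\infty}\lesssim \|u\|_{L^2}(|t|\Im z)^{-1/2}$. Taking $\Im z$ large makes $|2t|$ times this less than $1$, so a Neumann series gives invertibility and hence maximality; the case $t=0$ is trivial.

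\emph{Conjugation.} The intended identity is $e^{-isX}U(t)=U(t)e^{-is(X-2tL_{u_0})}$, or equivalently the resolvent form $U(t)(X-2tL_{u_0}-z)^{-1}=(X-z)^{-1}U(t)$ for $\Im z\gg1$. This resolvent form yields \eqref{A0U}, since the range of $(X-2tL_{u_0}-z)^{-1}$ is exactly $\D(X-2tL_0)$. We first treat $u_0\in H^3$, so that $U(t)$ preserves $H^2_+$ and, by Proposition~\ref{P:Unitary_Propagator}(vi), weighted spaces. For $h$ regular and decaying set
\[
\phi(t):=XU(t)h-U(t)(X-2tL_{u_0})h .
\]
Differentiating and using $[X,P_u]=-2L_u$ from \eqref{E:XPu_commutator} together with $L_{u(t)}U(t)=U(t)L_{u_0}$, we get
\[
\dot\phi = P_u XU h-2L_uU h - P_uU(X-2tL_{u_0})h+2UL_{u_0}h = P_u\phi ,
\]
with $\phi(0)=0$. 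Uniqueness for \eqref{Initial_value_Pb_reg} then gives $\phi\equiv0$. Rigor requires $h$ to lie in a core on which all terms make sense, for instance $h\in H^2_+$ with $\langle x\rangle h, \langle x\rangle L_{u_0}h\in L^2$; Proposition~\ref{P:Unitary_Propagator}(vi) keeps $U(t)h$ in $\D(X)$. Differentiating the $X$-term is where I expect the main technical obstacle. I would handle it through the difference quotients $\frac{1-e^{-i\epsilon X}}{i\epsilon}$, derive the equation for $\phi_\epsilon$, and only then let $\epsilon\searrow0$. An alternative that avoids domain issues is to prove the resolvent identity directly in weak form against smooth test functions.

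\emph{Passage to $L^2$ data.} Given $u_0\in L^2$, approximate by $u_{0,n}\in H^3$. Proposition~\ref{P:Unitary_Propagator}(v) gives $U_n(t)\to U(t)$ strongly. The resolvents $(X-2tL_{u_{0,n}}-z)^{-1}\to(X-2tL_{u_0}-z)^{-1}$ converge in norm, by the Neumann series above, since the perturbation $2t\CS(u_{0,n}-u_0)(X-2tL_0-z)^{-1}$ has small norm. The resolvent identity therefore passes to the limit, completing the proof of \eqref{A0U}.
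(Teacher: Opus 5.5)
\textbf{Verdict: there is a genuine gap, in the step that extends the identity from nice $h$ to the whole domain.}

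Your overall plan matches the paper's. Maximal accretivity comes from Kato--Rellich off $X-2tL_0$ via \eqref{A0 5}. Then $[X,P_u]=-2L_u$ is combined with $L_{u(t)}U(t)=U(t)L_{u_0}$ for $H^3$ data, followed by approximation. The paper does the time derivative in the weak form you mention, pairing against Schwartz $\phi,\psi$, and your resolvent-level passage to $L^2$ data is fine. One small slip: the symmetric term $2t\CS u$ leaves the \emph{imaginary} part of $\langle h,(X-2tL_u)h\rangle$ unchanged, not the real part. The imaginary part is the relevant one here, since $\Im\langle h,Xh\rangle=-\tfrac12|\widehat h(0)|^2\le 0$.

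\textbf{The gap.} You assert that your class of $h$ is a core for $X-2tL_{u_0}$. It is not, and no class to which Proposition~\ref{P:Unitary_Propagator}(vi) applies can be. If $h\in L^2_+$ and $\langle x\rangle h\in L^2$, then $\widehat h\in H^1(\R)$ vanishes on $(-\infty,0)$, so $\widehat h(0)=0$, i.e.\ $I_+(h)=0$. But $h\mapsto\widehat h(0)$ is continuous for the graph norm of $X-2tL_{u_0}$; by \eqref{A0 7} that norm is equivalent to $\|e^{it\xi^2}\widehat h\|_{H^1([0,\infty))}$. Hence the graph closure of your class lies in the proper closed subspace $V=\{h\in\D(X-2tL_0):\widehat h(0)=0\}$. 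Your extra condition $\langle x\rangle L_{u_0}h\in L^2$ likewise forces $\widehat{L_{u_0}h}(0)=0$, i.e.\ $\int u_0h\,dx=0$, which cuts out a further direction when $u_0\neq0$; it is not needed and should be dropped. Consequently $\phi\equiv0$ gives the resolvent identity only on the closed, non-dense subspace $(X-2tL_{u_0}-z)V$. It misses exactly the vectors with $I_+\neq0$, which are the ones the explicit formula probes. Schwartz test functions in the weak form have the same defect.

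\textbf{What is missing} is the step the paper compresses into ``crucially, \ldots closed operators'': agreement on a core of $V$ must be upgraded to equality. One way to do this:
\begin{itemize}
\item On $V$ one has $\Im\langle h,(X-2tL_{u_0})h\rangle=-\tfrac12|\widehat h(0)|^2=0$, so $C:=(X-2tL_{u_0})|_V$ is symmetric.
\item Its adjoint is $C^*=X-2tL_{u_0}$. Conjugating by $e^{-it\partial^2}$ reduces $X-2tL_0$ on $V$ versus on $\D(X-2tL_0)$ to $i\partial_\xi$ on $H^1_0$ versus $H^1$ in Fourier variables, and $2t\CS u_0$ is symmetric with small relative bound.
\item Your computation for $\widehat h\in C^\infty_c((0,\infty))$, which is a core for $C$, together with closedness of $A:=U(t)^*XU(t)$, gives $C\subseteq A$.
\item An accretive extension of a symmetric operator is contained in that operator's adjoint, so $A\subseteq X-2tL_{u_0}$. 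Maximality of $A$ then forces equality, which is \eqref{A0U}.
\end{itemize}

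Alternatively, your difference-quotient device can bypass (vi) entirely. Set $X_\eps=\frac{1-e^{-i\eps X}}{i\eps}$ and integrate $\frac{d}{dt}U(t)^*X_\eps U(t)h=U(t)^*[X_\eps,P_{u(t)}]U(t)h$ in time. Using that $[X_\eps,P_u]$ is bounded $H^1_+\to L^2_+$ uniformly in $\eps$ (read off from the proof of \eqref{E:XPu_commutator}), one gets $X_\eps U(t)h\rightharpoonup U(t)(X-2tL_{u_0})h$ for every $h\in\D(X)\cap H^1_+$. Closedness of $X$ then gives the identity on a set containing all $h$ with $\widehat h\in C^\infty_c([0,\infty))$, and that set \emph{is} a core.
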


\begin{proof}
From \eqref{A0 5} and H\"older's inequality, we obtain 
\begin{align}\label{A0 6}
\bigl\| 2 t \CS u (X-2tL_0-z)^{-1} \bigr\|_{L^2_+ \to L^2_+} & \lesssim |t|^{1/2} (\Im z)^{-\frac12} \| u\|_{L^2} .
\end{align}
This allows one to apply the Kato--Rellich technique to deduce that $X-2tL_u$ shares the domain \eqref{A0 4}, is maximally accretive,  and 
\begin{align}\label{A0 7}
\bigl\| (X-2tL_u-z) h \bigr\|_{L^2_+} &\approx \bigl\| (X-2tL_0-z) h\bigr\|_{L^2_+} \qtq{for} \Im z \gtrsim |t|\,\|u\|_{L^2}^2.
\end{align}

We turn now to the interactions of this operator with $U(t)$.  Initially, we assume that $u(0)\in H^3_+(\R)$ in order to take advantage of Proposition~\ref{U(t)_u_reg}. In particular, for any Schwartz function $\psi\in L^2_+(\R)$, we have $\psi(t)=U(t)\psi \in H^2_+(\R)$.  Using also property (vi) in Proposition~\ref{P:Unitary_Propagator}, we have $\langle x\rangle \psi(t) \in L^2(\R)$ for all $t\in\R$.  Choosing two such Schwartz functions $\phi,\psi\in L^2_+(\R)$, we may use that $P_u$ is anti-selfadjoint, \eqref{E:XPu_commutator}, and \eqref{LuU=ULu0} to compute
\begin{align*}
\tfrac{d\ }{dt} \bigl\langle U(t)\phi,\ X U(t)\psi\bigr\rangle &= \bigl\langle U(t)\phi,\ \bigl[X,\, P_{u(t)}\bigr]U(t)\psi\bigr\rangle \\
	&= -2 \bigl\langle U(t)\phi,\ L_{u(t)} U(t)\psi\bigr\rangle  = -2 \bigl\langle\phi,\ L_{u(0)} \psi\bigr\rangle.
\end{align*}
In this way, we see that
\begin{align}\label{A0tmp}
\bigl\langle U(t)\phi,&\ XU(t)\psi\bigr\rangle =  \bigl\langle \phi,\ \bigl(X - 2t L_{u(0)}\bigr)\psi\bigr\rangle
\end{align}
for every pair of Schwartz functions $\phi,\psi\in L^2_+(\R)$.  It is now possible to remove the restriction $u(0)\in H^3(\R)$ by approximation and Proposition~\ref{P:Unitary_Propagator}.  Lastly, the full assertion \eqref{A0U} follows from this, the unitarity of $U(t)$ and, crucially, the fact  that both $X-2tL_{u(t)}$ and $X$ are closed operators.
\end{proof}

From \eqref{A0U}, we see that
\begin{equation}\label{almost EFG}
(X-z)^{-1} U(t) = U(t)  \bigl(X-2tL_{u(0)}-z\bigr)^{-1} 
\end{equation}
and that both operators map into $\D(X)$ and so into the domain of $I_+$. This brings us quite close to verifying \eqref{EFG}, especially when combined with the Cauchy integral formula:
\begin{equation}\label{CIF}
h(z)  = \tfrac{1}{2\pi i} I_+\bigl( (X-z)^{-1} h \bigr) \qtq{for every} z\in\C_+\qtq{and} h \in L^2_+(\R).
\end{equation}
The one task that remains is to verify that $I_+ \circ U(t) = I_+$ in a suitable sense.  For this purpose, we mirror the analysis in \cite[Lemma~4.2]{KLV2025}:

\begin{lemma}\label{L:chi}
For $y>0$ let $\chi_y(x) := \tfrac{iy}{x+iy}$ be as in \eqref{I_+}. Let $U(t)$ denote the family of unitary operators associated to a solution $u(t)$ to \eqref{BO} with initial data $u_0\in L^2(\R)$.  Then
\begin{gather}\label{U chi} 
 \| U(t)^*\chi_y - \chi_y\|_{L^2_+} \lesssim y^{-\frac32} |t| + y^{-1} \,|t| \,\| u_0 \|_{L^2} \qtq{for all} t\in \R.
\end{gather}
\end{lemma}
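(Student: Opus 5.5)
The plan is to write $U(t)^*\chi_y-\chi_y$ via Duhamel and then compute $P_{u}\chi_y$ explicitly, showing it lies in $L^2_+$ with the desired norm bound.

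\emph{Duhamel step.} Since $P_u$ is anti-selfadjoint, property (i) of Proposition~\ref{P:Unitary_Propagator} gives, after taking adjoints, $\frac{d}{dt}U(t)^* = -U(t)^*P_{u(t)}$ as operators from $H^1_+$ to $H^{-1}_+$. This is legitimate because $U(t)^*$ is bounded on $H^{\pm1}_+$ by (iv) and duality. As $\chi_y\in H^1_+(\R)$, integrating gives
\[
U(t)^*\chi_y-\chi_y=-\int_0^t U(s)^*P_{u(s)}\chi_y\,ds .
\]
A priori this identity holds in $H^{-1}_+$. If we show that $P_{u(s)}\chi_y\in L^2_+$ with a bound uniform in $s$, then unitarity of $U(s)^*$ on $L^2_+$ gives
\[
\|U(t)^*\chi_y-\chi_y\|_{L^2}\le |t|\sup_s\|P_{u(s)}\chi_y\|_{L^2}.
\]
It will then suffice to prove $\|P_u\chi_y\|_{L^2}\lesssim y^{-3/2}+y^{-1}\|u\|_{L^2}$ for every $u\in L^2$, and to invoke conservation of $\|u(s)\|_{L^2}$.

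\emph{Computing $P_u\chi_y$.} The linear term is $-i\chi_y''$ with $\chi_y''=2iy(x+iy)^{-3}$. A scaling computation gives $\|\chi_y''\|_{L^2}\approx y\cdot y^{-5/2}=y^{-3/2}$. For the nonlinear part, split $u=u_++u_-$ with $u_-=\overline{u_+}$.

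Since $u_+$ and $\chi_y$ are both boundary values of holomorphic functions on $\C_+$, we have $\Pi(u_+\chi_y)=u_+\chi_y$. Hence
\[
-2\partial_x(u_+\chi_y)+2u_+'\chi_y=-2u_+\chi_y',
\]
so the derivative falling on $u$ cancels.

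For $u_-$, the key observation is that $\chi_y$ has a single pole at $-iy$ in the lower half-plane. The function $\bigl(u_-(x)-u_-(-iy)\bigr)\frac{iy}{x+iy}$ extends holomorphically to $\C_-$ with enough decay, so $\Pi$ annihilates it. Therefore $\Pi(u_-\chi_y)=u_-(-iy)\chi_y$, a rank-one contribution. Here $u_-(-iy)$ denotes the antiholomorphic extension, i.e.\ $\overline{u_+(iy)}$. Its $\partial_x$ contributes $-2u_-(-iy)\chi_y'$.

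Collecting terms,
\[
P_u\chi_y=\chi_y''\cdot(-i)-2u_+\chi_y'-2\,\overline{u_+(iy)}\,\chi_y' .
\]

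\emph{Bounding the terms.} Using $\|\chi_y'\|_{L^\infty}\approx y^{-1}$, we get $\|u_+\chi_y'\|_{L^2}\le\|u\|_{L^2}y^{-1}$. The Cauchy integral and Cauchy--Schwarz give $|u_+(iy)|\lesssim y^{-1/2}\|u\|_{L^2}$. Combined with $\|\chi_y'\|_{L^2}\approx y^{-1/2}$, the last term is $\lesssim y^{-1}\|u\|_{L^2}$. This yields \eqref{U chi}.

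\emph{Main obstacle.} The delicate point is rigor at $L^2$ regularity. We must identify the $H^{-1}$-valued expression $P_u\chi_y$, defined through \eqref{-iP}, with the $L^2$ function computed above; this requires care with the product rule for $\partial_x\Pi(u\chi_y)$ when $u\in L^2$ only. I would handle this by first doing the computation for Schwartz $u$. The $L^2$-case then follows because both sides are continuous in $u$ in $H^{-1}$, by \eqref{P is Lip}, and the right-hand side bound depends only on $\|u\|_{L^2}$. An alternative is to pair against $H^1_+$ test functions throughout.
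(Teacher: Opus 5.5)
Your proposal is correct and follows the paper's route. You differentiate $U(t)^*\chi_y$ via $\frac{d}{dt}U(t)^*=-U(t)^*P_{u(t)}$, then use unitarity to reduce to an $L^2$ bound on $P_u\chi_y$ that is uniform in time by mass conservation; the derivative on $u_+$ cancels and $\|\chi_y''\|_{L^2}\approx y^{-3/2}$ gives the first term. The only difference is cosmetic and lies in the antiholomorphic part: you evaluate $\Pi(u_-\chi_y)=\overline{u_+(iy)}\,\chi_y$ exactly, whereas the paper writes $-P_u\chi_y=i\chi_y''+2\Pi(u_-'\chi_y)+2\Pi(u\chi_y')$ and estimates $\Pi(u_-'\chi_y)$ on the Fourier side; both give $y^{-1}\|u\|_{L^2}$.
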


\begin{proof}
We will apply the fundamental theorem of calculus. To this end, we note~that 
\begin{equation*}
\tfrac{d}{dt} U(t)^*\chi_y = - U(t)^*P_{u(t)}\chi_y, \quad - P_u \chi_y = i\chi''_y + 2 \CS( u_-'\chi_y) + 2 \CS (u\chi_y') ,
\end{equation*}
as well as
\begin{align*}
\| \chi''_y \|_{L^2_+} \lesssim y^{-\frac32} \qtq{and}
	\| \CS (u\chi_y') \|_{L^2_+} \leq \| \chi_y' \|_{L^\infty} \| u \|_{L^2}  \lesssim y^{-1} \| u \|_{L^2}.
\end{align*}
To treat the middle term, we first write
\begin{align*}
\widehat{ u_-'\chi_y }(\xi) = \int_\xi^\infty i(\xi-\eta) \widehat u(\xi-\eta) \,y e^{-y\eta}\,d\eta,
\end{align*}
valid for all $\xi\geq 0$, and so deduce that
\begin{align*}
\| \CS( u_-'\chi_y) \|_{L^2_+} \lesssim \int_0^\infty \| \widehat u\|_{L^2} \,\eta y e^{-y\eta}\,d\eta \lesssim y^{-1} \| u \|_{L^2}.
\end{align*}
Thus \eqref{U chi} follows by integrating in time and using the $L^2$ conservation law for \eqref{BO}.
\end{proof}

\begin{prop}\label{P:EFG}
Let $u(t)$ be the solution to \eqref{BO} with initial data $u_0\in L^2(\R)$ and let $U(t)$ denote the associated family of unitary operators.  Then,
\begin{equation}\label{GEFG}
   \tfrac{1}{2\pi i} I_+ \Bigl( (X - 2t L_{u_0} - z)^{-1} h \Bigr) = [U(t) h](z)
\end{equation}
for all $z \in \C_+$ and $h\in L^2_+(\R)$. In particular, \eqref{EFG} holds and
\begin{equation}\label{BEFG}
    \left| I_+ \Bigl( (X - 2t L_{u_0} - z)^{-1} h \Bigr) \right| \leq (4\pi \Im z)^{-\frac12} \|h\|_{L^2_+}.
\end{equation}
\end{prop}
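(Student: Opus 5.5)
We combine the conjugation identity \eqref{almost EFG}, the Cauchy integral formula \eqref{CIF}, and Lemma~\ref{L:chi}. Fix $z\in\C_+$, $h\in L^2_+(\R)$ and $t\in\R$, and set $g:=(X-2tL_{u_0}-z)^{-1}h$. By \eqref{almost EFG},
\begin{equation*}
U(t)g=(X-z)^{-1}U(t)h\in\D(X).
\end{equation*}
Applying \eqref{CIF} to $U(t)h$ gives $[U(t)h](z)=\tfrac{1}{2\pi i}I_+\bigl(U(t)g\bigr)$. It therefore suffices to show $I_+(U(t)g)=I_+(g)$, including that the limit defining $I_+(g)$ exists.

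For this we use the second description in \eqref{I_+} and write
\begin{equation*}
\langle\chi_y,U(t)g\rangle-\langle\chi_y,g\rangle=\langle U(t)^*\chi_y-\chi_y,\,g\rangle .
\end{equation*}
By Cauchy--Schwarz and \eqref{U chi}, the right-hand side is bounded by
\begin{equation*}
\bigl(y^{-3/2}|t|+y^{-1}|t|\,\|u_0\|_{L^2}\bigr)\|g\|_{L^2}.
\end{equation*}
Here $\|g\|_{L^2}\leq(\Im z)^{-1}\|h\|_{L^2}$, since $X-2tL_{u_0}$ is maximally accretive by Lemma~\ref{L:A0U}; more precisely $-i(X-2tL_{u_0})$ generates a contraction semigroup, so the resolvent at $z$ with $\Im z>0$ has norm at most $(\Im z)^{-1}$. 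The bound tends to $0$ as $y\to\infty$. Since $U(t)g\in\D(X)$, the Fourier transform of $U(t)g$ is continuous on $[0,\infty)$, so the limit $\langle\chi_y,U(t)g\rangle$ exists as $y\to\infty$. Hence $\langle\chi_y,g\rangle$ converges to the same value, so $g$ lies in the domain of $I_+$ and $I_+(g)=I_+(U(t)g)$. This proves \eqref{GEFG}.

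Formula \eqref{EFG} is the special case $h=\Pi u_0$, using Proposition~\ref{P:Unitary_Propagator}(iii): $U(t)\Pi u_0=\Pi u(t)=u_+(t)$.

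For \eqref{BEFG}, we use the pointwise bound for Hardy functions. From the Fourier representation,
\begin{equation*}
|f(z)|=\Bigl|\tfrac1{\sqrt{2\pi}}\int_0^\infty e^{iz\xi}\widehat f(\xi)\,d\xi\Bigr|\leq\tfrac{1}{\sqrt{2\pi}}\|\widehat f\|_{L^2}\Bigl(\int_0^\infty e^{-2\xi\Im z}\,d\xi\Bigr)^{1/2}=(4\pi\Im z)^{-1/2}\|f\|_{L^2}.
\end{equation*}
Apply this to $f=U(t)h$ and use unitarity, $\|U(t)h\|_{L^2}=\|h\|_{L^2}$. Combined with \eqref{GEFG}, this yields
\begin{equation*}
\bigl|I_+\bigl((X-2tL_{u_0}-z)^{-1}h\bigr)\bigr|=2\pi\bigl|[U(t)h](z)\bigr|\leq 2\pi(4\pi\Im z)^{-1/2}\|h\|_{L^2}.
\end{equation*}
This carries an extra factor $2\pi$ compared with \eqref{BEFG} as stated. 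That suggests \eqref{BEFG} should read $|[U(t)h](z)|\leq(4\pi\Im z)^{-1/2}\|h\|$; I would check the normalization before finalizing.

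The step needing the most care is the one where the domain of $I_+$ is transferred from $U(t)g$ to $g$. The functional $I_+$ is not closable, so this cannot be done by a soft argument. The quantitative decay in \eqref{U chi}, together with the uniform resolvent bound, is exactly what makes the transfer possible.
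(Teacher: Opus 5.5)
Your argument is correct and is essentially the paper's proof. Both replace $\chi_y$ by $U(t)^*\chi_y$ using Lemma~\ref{L:chi} and then apply \eqref{almost EFG} and \eqref{CIF}; your explicit check that the limit defining $I_+\bigl((X-2tL_{u_0}-z)^{-1}h\bigr)$ exists makes rigorous what the paper's chain of equalities leaves implicit.

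Your suspicion about \eqref{BEFG} is also right. The paper's own Riesz-representative argument gives $|[U(t)h](z)|\leq(4\pi\Im z)^{-1/2}\|h\|_{L^2}$, so together with \eqref{GEFG} the bound on $I_+$ is $2\pi(4\pi\Im z)^{-1/2}\|h\|_{L^2}=(\pi/\Im z)^{1/2}\|h\|_{L^2}$. This is sharp: at $t=0$ with $h=(x-\bar z)^{-1}$ it holds with equality, so the printed constant genuinely fails. The missing factor $2\pi$ is a harmless typo, since every later use of \eqref{BEFG} only needs a bound $\lesssim(\Im z)^{-1/2}\|h\|_{L^2}$.
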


\begin{proof}
Applying \eqref{I_+}, Lemma~\ref{L:chi}, \eqref{almost EFG}, and then \eqref{CIF}, we find
\begin{align*}
   I_+ \Bigl( (X - 2t L_{u_0} - z)^{-1} h \Bigr) &= \lim_{y\to\infty} \bigl\langle \chi_y,\ (X - 2t L_{u_0} - z)^{-1} h\bigr\rangle\\
   &= \lim_{y\to\infty} \bigl\langle U^*(t) \chi_y,\ (X - 2t L_{u_0} - z)^{-1} h\bigr\rangle \\
   &= \lim_{y\to\infty} \bigl\langle \chi_y,\ (X-z)^{-1} U(t) h\bigr\rangle \\
   &= 2\pi i [U(t)h](z).
\end{align*}
This proves \eqref{GEFG}.  Choosing $h=\CS u_0$, the explicit formula \eqref{EFG} follows from part~(iii) of Proposition~\ref{P:Unitary_Propagator}.

Lastly, the Riesz representative of the linear functional $h\mapsto h(z)$ is
$$
\tfrac1{2\pi i}(x-\overline z)^{-1}, \qtq{which has $L^2$ norm}  (4\pi \Im z)^{-\frac12}.
$$
Thus, \eqref{BEFG} follows from \eqref{GEFG} and the unitarity of $U(t)$.
\end{proof}

\section{Asymptotic Profiles}\label{S:profiles}

To extract the large-time behavior of the solution from the explicit formula \eqref{EFG}, we must analyze the spectral components of the Lax operator.

By Theorem~\ref{T: Sun}, the spectral data of an $N$-soliton solution is encoded by a set $\Lambda$ of negative parameters $\la_1< \cdots < \la_N$. However, an arbitrarily small perturbation of an $N$-soliton solution may generate a countably infinite number of additional discrete eigenvalues. Our first goal in this section is to show a form of spectral stability for the Lax operator. Specifically, we will show that the set of the largest (in absolute value) $N$ eigenvalues remains robust, while the `spectral dust' formed by the additionally generated eigenvalues is strictly controlled by the size of the perturbation.

\begin{prop}[Spectral perturbation bounds]\label{P:Spectral_bounds}
Fix an integer $N\geq 1$ and a set $\Lambda$ of negative parameters $\la_1< \cdots < \la_N$. There exists $\delta > 0$ such that for any $u_0\in L^2(\R)$ satisfying
\begin{align}\label{935}
 \|u_0 - Q_{\Lambda, \vec{c}}\|_{L^2} < \delta \qtq{for some} \vec c= (c_1, \ldots, c_N)\in \R^N,
\end{align}
the Lax operator $L_{u_0}$ has $N$ negative eigenvalues $\tilde \lambda_1< \tilde \lambda_2 < \dots < \tilde \lambda_N$ satisfying
\begin{equation}\label{E:continuity_lambda}
\sup_{1\leq j\leq N}\, |\tilde \lambda_j - \lambda_j| \lesssim \|u_0 - Q_{\Lambda, \vec{c}}\|_{L^2}.
\end{equation}
Moreover, the remaining discrete spectrum satisfies 
\begin{equation}\label{E:trace}
    \sum_{\tilde \lambda_k \in \sigma_d(L_{u_0}) \setminus \{\tilde \lambda_1, \dots, \tilde\lambda_N\}} |\tilde \lambda_k| \lesssim \|u_0 - Q_{\Lambda, \vec{c}}\|_{L^2}.
\end{equation}
Furthermore, if we let $f_j, \tilde f_j$ denote $L^2$-normalized eigenvectors corresponding to the eigenvalues $\lambda_j, \tilde \lambda_j$ of $L_{Q_{\Lambda, \vec{c}}}$ and $L_{u_0}$, respectively, then the centers $c_j:= \Re\langle f_j, Xf_j\rangle$ and $\tilde c_j:= \Re\langle \tilde f_j, X\tilde f_j\rangle$ satisfy
\begin{align}\label{centers close}
\sup_{1\leq j\leq N}\, |\tilde c_j - c_j| \lesssim \|u_0 - Q_{\Lambda, \vec{c}}\|_{L^2}.
\end{align}
\end{prop}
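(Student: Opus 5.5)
Write $Q:=Q_{\Lambda,\vec c}$ and $\eta:=\|u_0-Q\|_{L^2}$. The plan is to treat $L_{u_0}$ as a perturbation of $L_Q$ using Theorem~\ref{T:Lu-Intro}, and to handle the three claims in turn.

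\emph{Eigenvalues.} By Theorem~\ref{T: Sun} and Proposition~\ref{P:Wu_identity}, $\sigma(L_Q)=\{\la_1,\dots,\la_N\}\cup[0,\infty)$, and each $\la_j$ is simple. Since $L_Q$ is a translate of $L_{Q_{\Lambda,0}}$ (conjugation by translation is unitary), all constants may be taken independent of $\vec c$. Fix small disjoint circles $\Gamma_j$ of radius $r\ll\min(|\la_N|,\min_{j}|\la_{j+1}-\la_j|)$ around $\la_j$. On each $\Gamma_j$ we have $\dist(z,\sigma(L_Q))\ge r$, so for $\delta$ small condition \eqref{close} holds and \eqref{E:Lipschitz z} gives
\[
\bigl\|(L_Q-z)^{-1}-(L_{u_0}-z)^{-1}\bigr\|\lesssim\eta .
\]
The Riesz projections \eqref{R to E} over $\Gamma_j$ therefore differ by $O(\eta)<1$ in norm, so each $\Gamma_j$ encloses exactly one (simple) eigenvalue $\tilde\la_j$ of $L_{u_0}$. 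To get $|\tilde\la_j-\la_j|\lesssim\eta$, write
\[
\tilde\la_j=\tr\Bigl(\tfrac{-1}{2\pi i}\oint_{\Gamma_j}z(L_{u_0}-z)^{-1}\,dz\Bigr)
\]
and compare it with the same expression for $L_Q$. Both operators have rank one, so the trace difference is bounded by $2\times$ the operator-norm difference, which is $O(\eta)$. The same resolvent bound, applied on a contour surrounding the region $\{z: \Re z\le -r,\ \dist(z,\Lambda)\ge r\}$, shows that $L_{u_0}$ has no other eigenvalues with $|\tilde\la|\ge r$ away from the $\Gamma_j$'s. Hence the remaining eigenvalues lie in $(-r,0)$, so the $\tilde\la_j$ really are the $N$ most negative eigenvalues.

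\emph{Trace bound \eqref{E:trace}.} By \eqref{E:trace_ineq} and \eqref{soliton mass},
\[
2\pi\sum_k|\tilde\la_k|\le\|u_0\|_{L^2}^2\le\|Q\|_{L^2}^2+2\|Q\|\eta+\eta^2=2\pi\sum_j|\la_j|+O(\eta).
\]
Subtracting the first part, $2\pi\sum_j|\tilde\la_j|\ge2\pi\sum_j|\la_j|-O(\eta)$, which leaves $\sum_{\text{rest}}|\tilde\la_k|\lesssim\eta$. This step is immediate.

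\emph{Centers.} This is the main obstacle, since it requires the eigenvectors to be close in the graph norm of $X$. Choose a fixed $g\in\D(X)$ (say $\widehat g\in C_c^\infty$) with $\langle f_j,g\rangle\neq0$; since $\vec c$ varies, take $g$ depending on $\vec c$ by translation. Translation commutes with the Fourier-side structure up to a phase, and $X$ conjugates to $X+c$, so each center shifts by exactly $c$ and the comparison is translation invariant. Integrating \eqref{6.6} over $\Gamma_j$, with \eqref{6.7} satisfied uniformly for small $\delta$, gives
\[
\|X(E_j-\tilde E_j)g\|_{L^2}+\|(E_j-\tilde E_j)g\|_{L^2}\lesssim\eta,
\]
and \eqref{6.5} bounds $\|XE_jg\|$. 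Set
\[
f_j=\frac{E_jg}{\|E_jg\|},\qquad \tilde f_j=\frac{\tilde E_jg}{\|\tilde E_jg\|}.
\]
Both are normalized eigenvectors (the phase ambiguity does not affect $\langle f,Xf\rangle$), and $\|E_jg\|$ is bounded below. Hence $\|f_j-\tilde f_j\|+\|X(f_j-\tilde f_j)\|\lesssim\eta$. Finally,
\[
\langle\tilde f_j,X\tilde f_j\rangle-\langle f_j,Xf_j\rangle=\langle\tilde f_j-f_j,X\tilde f_j\rangle+\langle f_j,X(\tilde f_j-f_j)\rangle=O(\eta),
\]
which gives \eqref{centers close}. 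The delicate points are the uniformity of the constants in $\vec c$, which translation handles, and ensuring that the constant $A$ in Proposition~\ref{P:CT} stays fixed along the contour.
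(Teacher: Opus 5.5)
Your proposal follows the paper's proof essentially step by step. Both arguments use Riesz projections over small circles $\Gamma_j$ compared through \eqref{E:Lipschitz z}, the rank-one trace comparison for $|\tilde\la_j-\la_j|$, the trace inequality \eqref{E:trace_ineq} set against \eqref{soliton mass}, and \eqref{6.6} integrated over $\Gamma_j$ for the centers. The differences are cosmetic. You project a generic $g\in\D(X)$, whereas the paper projects $g=f_j$, which it may do by Corollary~\ref{C: f in X}. Your extra contour excluding other eigenvalues is harmless, but it is already implied by \eqref{E:trace}.

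One assertion is false. For $N\geq 2$, $Q_{\Lambda,\vec c}$ is \emph{not} a translate of $Q_{\Lambda,0}$: translation shifts all $c_j$ by the same amount, and the relative positions $c_k-c_j$ are genuine parameters.

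\begin{itemize}
\item For $\delta$, \eqref{E:continuity_lambda} and \eqref{E:trace}, nothing is lost. The constants in Theorem~\ref{T:Lu-Intro} depend only on $\|Q_{\Lambda,\vec c}\|_{L^2}$, $|z|$ and $\dist(z,\sigma(L_{Q_{\Lambda,\vec c}}))$, and none of these depend on $\vec c$.
\item For \eqref{centers close}, translation only lets you normalize $c_j=0$ for one fixed $j$, since $\tilde c_j-c_j$ is translation invariant. Your constant still depends on $\|g\|+\|Xg\|$ and on $|\langle f_j,g\rangle|^{-1}$, and hence a priori on the relative positions. The paper's constant carries the same kind of dependence through $\|Xf_j\|$. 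So this is not a gap relative to the paper, but you should drop the claim that translation makes everything uniform.
\end{itemize}

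Finally, as written, you only ask that $\langle f_j,g\rangle\neq0$. Then the smallness needed to bound $\|\tilde E_jg\|_{L^2}$ from below depends on the ratio $|\langle f_j,g\rangle|/\|g\|$, which makes $\delta$ depend on $\vec c$. The fix is to choose $g$ with $|\langle f_j,g\rangle|\geq\frac12\|g\|_{L^2}$, or simply take $g=f_j$. Then the lower bound uses only the uniform estimate $\|\tilde E_j-E_j\|_{L^2_+\to L^2_+}\lesssim\eta$, and $\delta$ is independent of $\vec c$.
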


\begin{proof}
We start by proving \eqref{E:continuity_lambda}. By Theorem~\ref{T: Sun}, the Lax operator $L_{Q_{\Lambda, \vec c}}$ has exactly $N$ simple negative eigenvalues $\lambda_1 < \dots < \lambda_N$. Because these eigenvalues are distinct and bounded away from the essential spectrum $[0, \infty)$, we can choose small, mutually disjoint, closed contours $\Gamma_j$ in the open left half-plane $\{z\in\C\ :\ \Re(z)<0\}$ such that each $\Gamma_j$ encloses only $\lambda_j$.  Within this proof, implicit constants are permitted to depend on the precise choice of contours. 

For $0<\delta\leq 1$ sufficiently small so as to guarantee \eqref{close} with $u=u_0$, $v= Q_{\Lambda, \vec c}$, and $z\in \cup \mkern2mu\Gamma_j$, we may define the spectral projections
\begin{equation*}
\tilde E_j := E_j(u_0) := \tfrac{-1}{2\pi i} \oint_{\Gamma_j} (L_{u_0} - z)^{-1} \,dz \qtq{and} E_j:= E_j(Q_{\Lambda, \vec c}).
\end{equation*}
The Lipschitz continuity of the resolvent guaranteed by \eqref{E:Lipschitz z} yields
\begin{align}\label{1052}
    \| \tilde \lambda_j^n \tilde E_j - \lambda_j^nE_j\|_{L^2_+\to L^2_+}
    &\lesssim \oint_{\Gamma_j} |z|^n \bigl\| (L_{u_0} - z)^{-1} - (L_{Q_{\Lambda, \vec c}} - z)^{-1} \bigr\|_{L^2_+\to L^2_+} \,|dz| \notag\\
    &\lesssim_n \| u_0 - Q_{\Lambda, \vec c} \|_{L^2} \qtq{for all} n\in \Z.
\end{align}
Therefore, taking $n=0$ and $\delta$ even smaller if necessary, we get
\begin{align}\label{1053}
\| \tilde E_j - E_j \|_{L^2_+\to L^2_+}  \leq  \tfrac12 \qtq{and so} \operatorname{rank}(\tilde E_j) = \operatorname{rank}(E_j) = 1.
\end{align}
This implies that $L_{u_0}$ has exactly one simple eigenvalue $\tilde \lambda_j$ inside each contour $\Gamma_j$. The bound \eqref{E:continuity_lambda} on the eigenvalue shift follows from the $n=1$ case of \eqref{1052}:
\begin{equation*}
 |\tilde \lambda_j- \lambda_j| = \bigl|\tr \{ \tilde \lambda_j \tilde E_j - \lambda_j E_j\}\bigr|\leq 2 \| \tilde \lambda_j \tilde E_j - \lambda_j E_j\|_{L^2_+\to L^2_+}\lesssim \| u_0 - Q_{\Lambda, \vec c} \|_{L^2}.
\end{equation*}
(Recall that the trace is bounded by the rank times the operator norm.)

In view of \eqref{1052} and \eqref{1053}, we have
\begin{align}\label{1054}
\|\tilde E_j f_j - f_j\|_{L^2}\lesssim  \| u_0 - Q_{\Lambda, \vec c} \|_{L^2} \qtq{and} \|\tilde E_j f_j\|_{L^2}\geq \tfrac12.
\end{align}
Choosing
\begin{align}\label{def tilde f}
\tilde f_j = \frac{\tilde E_j f_j}{\|\tilde E_j f_j\|_{L^2}},
\end{align}
we obtain
\begin{align}\label{eigen close}
\|\tilde f_j - f_j\|_{L^2}\leq \bigl\|\tilde f_j \|\tilde E_j f_j\|_{L^2} - f_j\bigr\|_{L^2} + \bigl| \|\tilde E_j f_j\|_{L^2}-1\bigr|\lesssim  \| u_0 - Q_{\Lambda, \vec c} \|_{L^2}.
\end{align}

We now turn to \eqref{E:trace}. To control the remaining discrete spectrum of $L_{u_0}$, we use \eqref{E:trace_ineq} which we restate as
\begin{equation}\label{E:trace_ineq2}
    2\pi \sum_{\tilde \lambda \in \sigma_d(L_{u_0})} |\tilde \lambda| \leq \| u_0 \|_{L^2}^2.
\end{equation}
On the other hand, by \eqref{soliton mass} we have
\begin{equation}\label{E:trace_eq_Q}
    2\pi \sum_{j=1}^N |\lambda_j| = \| Q_{\Lambda, \vec c} \|_{L^2}^2.
\end{equation}
Dividing the sum \eqref{E:trace_ineq2}  into the contribution of the $N$ primary eigenvalues and that of the remaining eigenvalues and using \eqref{E:trace_eq_Q}, we get
\begin{align*}
 2\pi \sum_{\tilde \lambda_k \notin \{\tilde\lambda_1, \dots, \tilde\lambda_N\}} |\tilde \lambda_k|
 &\leq \| u_0 \|_{L^2}^2 - 2\pi \sum_{j=1}^N |\tilde \lambda_j|\\
 &= \| u_0 \|_{L^2}^2 - \| Q_{\Lambda, \vec c} \|_{L^2}^2+ 2\pi \sum_{j=1}^N  |\lambda_j| - |\tilde\lambda_j| \\
&\leq \big| \| u_0 \|_{L^2}^2 - \| Q_{\Lambda, \vec c} \|_{L^2}^2 \big| + 2\pi \sum_{j=1}^N \big| \lambda_j - \tilde \lambda_j \big|.
\end{align*}
Applying \eqref{E:continuity_lambda}, we deduce \eqref{E:trace}.

Finally, we turn to \eqref{centers close}. Mirroring the analysis above and using \eqref{6.6}, we deduce
\begin{align}\label{155}
\|X(\tilde E_j - E_j)g\|_{L^2} \lesssim \| u_0 - Q_{\Lambda, \vec c}\|_{L^2} \bigl[ \|g\|_{L^2} + \|Xg\|_{L^2}\bigr] \qtq{for all} g\in D(X).
\end{align}
In particular, using also Corollary~\ref{C: f in X}, \eqref{1054}, and \eqref{def tilde f}, we have
\begin{align}\label{1055}
\tfrac12 \|X \tilde f_j\|_{L^2}\leq\|\tilde E_j f_j\|_{L^2} \|X \tilde f_j\|_{L^2} &=  \|X \tilde E_j f_j\|_{L^2}\notag\\
&\leq \|X (\tilde E_j-E_j)  f_j\|_{L^2} + \|Xf_j\|_{L^2} \notag\\
&\lesssim \| u_0 - Q_{\Lambda, \vec c}\|_{L^2} \bigl[ 1 + \|Xf_j\|_{L^2}\bigr] + \|Xf_j\|_{L^2}\notag\\
&\lesssim 1 + \|Xf_j\|_{L^2}.
\end{align}
The utility of these estimates stems from the identities
$$
\tilde c_j = \Re \langle \tilde f_j, X\tilde E_j \tilde f_j\rangle \qtq{and} c_j = \Re \langle  f_j, X E_j  f_j\rangle.
$$
In this way, we may bound
\begin{align*}
|\tilde c_j - c_j|
&\leq \bigl| \langle \tilde f_j, X(\tilde E_j-E_j)\tilde f_j\rangle\bigr| +\bigl| \langle \tilde f_j-f_j, XE_j\tilde f_j\rangle\bigr| +\bigl| \langle f_j, XE_j(\tilde f_j-f_j)\rangle\bigr|\\
&\lesssim  \| u_0 - Q_{\Lambda, \vec c}\|_{L^2} \bigl[1 +\|X\tilde f_j\|_{L^2}\bigr] + \|\tilde f_j-f_j\|_{L^2}  \|Xf_j\|_{L^2}\\
&\lesssim  \| u_0 - Q_{\Lambda, \vec c}\|_{L^2}\bigl[1 +\|Xf_j\|_{L^2}\bigr]\\
&\lesssim  \| u_0 - Q_{\Lambda, \vec c}\|_{L^2},
\end{align*}
where we used \eqref{eigen close}, \eqref{155}, \eqref{1055}, and Corollary~\ref{C: f in X}.
\end{proof}

\begin{prop}[Asymptotics off the real axis]\label{P:Asym_profile}
Let $u(t)$ denote the solution to \eqref{BO} with data $u(0)=u_0\in L^2(\R)$. Given $\la \in \R$, we have:\\
{\upshape (i)} If $\lambda$ is not eigenvalue of $L_{u_0}$, then for any $z \in \C_+$,
\begin{equation}\label{E:Vanishing_Asymptotic_Profile}
    \lim_{|t| \to \infty} \Pi u(t, z - 2t \la) = 0.
\end{equation}
{\upshape (ii)} If $\lambda$ is an eigenvalue of $L_{u_0}$ and $f$ is an $L^2$-normalized eigenfunction associated with $\la$, then for any $z \in \C_+$,
\begin{equation}\label{E:Asymptotic_Profile}
    \lim_{|t| \to \infty} \Pi u(t, z - 2t \la) = \tfrac{2i|\la|}{2|\la|(z-c) + i }, \qtq{where} c: = \Re\langle f, Xf \rangle .
\end{equation}
\end{prop}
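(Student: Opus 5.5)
The plan is to read off $\Pi u(t,z-2t\la)$ from the generalized explicit formula. By \eqref{EFG}, with $w=z-2t\la\in\C_+$,
\[
\Pi u(t,z-2t\la)=\tfrac1{2\pi i}\, I_+\bigl(R_t\,\Pi u_0\bigr),\qquad R_t:=\bigl(X-2t(L_{u_0}-\la)-z\bigr)^{-1}.
\]
Here $R_t$ is a legitimate resolvent by Lemma~\ref{L:A0U}, since $X-2tL_{u_0}$ is maximally accretive. The key tool is the uniform bound \eqref{BEFG}: $|I_+(R_t h)|\le (4\pi\Im z)^{-1/2}\|h\|_{L^2}$ for all $t$. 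Because of this bound, it suffices to prove the asymptotics of $I_+(R_th)$ for $h$ in a dense subset of the relevant subspaces, and then pass to general $h$ by a $3\eps$ argument.

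\emph{Non-eigenvalue directions.} Let $E_\la$ denote the spectral projection onto $\Ker(L_{u_0}-\la)$; it is zero in case (i). For $h\in\Range(1-E_\la)$, I will show $I_+(R_th)\to0$. Since $\la$ is not an eigenvalue of $L_{u_0}$ restricted to $\Range(1-E_\la)$, the range of $L_{u_0}-\la$ is dense there. So I take $h=(L_{u_0}-\la)k$ with $k$ drawn from a dense class. A convenient class is $k=(L_{u_0}-\mu)^{-1}g$ with $\mu\notin\R$ and $\widehat g\in C^\infty_c$, projected by $1-E_\la$. By Proposition~\ref{P:CT} and Corollary~\ref{C: f in X}, such $k$ lie in $\D(X)$, have continuous $\widehat k$, and have $(L_{u_0}-\la)k\in L^2_+$. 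The algebraic identity $-2t(L_{u_0}-\la)=(X-2t(L_{u_0}-\la)-z)-(X-z)$ then gives
\[
R_t(L_{u_0}-\la)k=-\tfrac1{2t}\bigl[k-R_t(X-z)k\bigr],
\]
so that
\[
|I_+(R_th)|\le \tfrac1{2|t|}\Bigl[|I_+(k)|+(4\pi\Im z)^{-1/2}\|(X-z)k\|_{L^2}\Bigr]\longrightarrow 0 .
\]
Combined with the density argument, this proves (i). It also shows that in case (ii) only the component $E_\la\Pi u_0=\langle f,u_0\rangle f$ survives.

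\emph{Eigen-direction.} For $h=f$, set $g=R_tf$ and decompose $g=a(t)f+r(t)$ with $r\perp f$. Pairing the resolvent equation with $f$ eliminates the $2t(L_{u_0}-\la)$ term, since $L_{u_0}-\la$ is selfadjoint and annihilates $f$. This yields $a\,\langle f,(X-z)f\rangle+\langle f,(X-z)r\rangle=1$. Projecting with $1-E_\la$ gives $r=-R_t(1-E_\la)(X-z)af$, where $(X-z)f$ is legitimate by Corollary~\ref{C: f in X}. By the previous paragraph, applied in weak form (pairing against the dense class $\D(X)$, with the same $1/t$ trick), $\langle f,(X-z)r\rangle\to0$ and $I_+(r)\to0$, using that $a$ stays bounded. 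Hence
\[
a\to\bigl(\langle f,Xf\rangle-z\bigr)^{-1}\quad\text{and}\quad I_+(R_tf)\to \frac{\sqrt{2\pi}\,\widehat f(0)}{\langle f,Xf\rangle-z}.
\]
Multiplying by $\langle f,u_0\rangle/(2\pi i)$ and using the Wu identities \eqref{Wu relation Hs}, namely $\langle f,u_0\rangle\sqrt{2\pi}\widehat f(0)=-2\pi\la|\widehat f(0)|^2=2\pi$, the limit becomes $\bigl[i(\langle f,Xf\rangle-z)\bigr]^{-1}$.

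It remains to identify $\Im\langle f,Xf\rangle=-\tfrac1{4|\la|}$, which is what converts this limit into \eqref{E:Asymptotic_Profile}. For this I will use Lemma~\ref{L:Com}: since $f$ is an eigenfunction, $0=\langle f,[X,L_{u_0}]f\rangle$ in the sense of \eqref{6.1}--\eqref{6.3}. The right-hand side equals $i-\tfrac{i}{2\pi}\langle f,u_+\rangle I_+(f)$ minus the anti-Hermitian part $2i\Im\langle f,Xf\rangle$ arising from $X$ being merely accretive (the boundary term $|\widehat f(0)|^2$), and the Wu identities then fix the constant.

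I expect the main obstacle to be this step: rigorously justifying the commutator pairing for an operator $X$ that is not selfadjoint, and tracking the boundary term at $\xi=0$ together with its sign. The fallback is to compute directly in Fourier variables, $\Im\langle f,Xf\rangle=-\tfrac12|\widehat f(0)|^2$, and then apply the third Wu identity. A secondary obstacle is making the weak $1/t$ argument for $r(t)$ uniform, which requires uniform control of $a(t)$; I would obtain this from \eqref{BEFG} and accretivity, via $\Im\langle g,(X-z)g\rangle\le-\Im z\|g\|^2$.
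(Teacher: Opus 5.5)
Your overall strategy matches the paper's: the explicit formula, the uniform bound \eqref{BEFG}, density plus the $1/t$ trick on $\Ker(L_{u_0}-\la)^\perp$, and the Wu identities for the constant. Your treatment of the non-eigenvalue directions is sound. Your dense class works, though the paper's choice $\widehat h\in C^\infty_c((0,\infty))$ is simpler, since $L_{u_0}-\la$ maps $H^1_+$ boundedly into $L^2_+$ and such $h$ have $I_+(h)=0$.

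The eigen-direction step, however, fails as written. Projecting $(X-2t(L_{u_0}-\la)-z)r=f-a(X-z)f$ with $1-E_\la$ does not give $r=-aR_t(1-E_\la)(X-z)f$. The reason is that $E_\la$ commutes with $L_{u_0}$ but not with $X$, hence not with $R_t$. The discarded $f$-component of $(X-2t(L_{u_0}-\la)-z)r$ is $\bigl(1-a\langle f,(X-z)f\rangle\bigr)f$, which by your own pairing identity equals $\langle f,(X-z)r\rangle f$. That is exactly the quantity you are trying to show tends to zero. Your formula for $r$ therefore holds only if $a(t)=(\langle f,Xf\rangle-z)^{-1}$ already, and the deduction of $\langle f,(X-z)r\rangle\to0$ is circular.

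The missing idea is to apply $R_t$ to $(X-z)f$ instead of decomposing $R_tf$. Since $f\in\D(X)\cap H^1_+$ and $(L_{u_0}-\la)f=0$, we have $(X-2t(L_{u_0}-\la)-z)f=(X-z)f$ exactly, that is, $R_t(X-z)f=f$. Splitting $(X-z)f=(\langle f,Xf\rangle-z)f+\bigl[Xf-\langle f,Xf\rangle f\bigr]$ then gives, for every $t$,
\[
R_tf=\frac{1}{\langle f,Xf\rangle-z}\Bigl(f-R_t\bigl[Xf-\langle f,Xf\rangle f\bigr]\Bigr).
\]
Here the bracket lies in $\Ker(L_{u_0}-\la)^\perp$, and $\langle f,Xf\rangle\neq z$ because $\Im\langle f,Xf\rangle<0<\Im z$. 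Your non-eigen step then yields $I_+(R_tf)\to I_+(f)/(\langle f,Xf\rangle-z)$ directly. This is the paper's decomposition \eqref{76}, and it makes $a(t)$, the pairing, and the accretivity bound unnecessary.

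Second, the value you aim for is off by a factor of two. In fact $\Im\langle f,Xf\rangle=-\tfrac1{2|\la|}=\tfrac1{2\la}$, not $-\tfrac1{4|\la|}$. With $-\tfrac1{4|\la|}$ your limit $[i(\langle f,Xf\rangle-z)]^{-1}$ would become $\tfrac{4|\la|}{1+4i|\la|(c-z)}$, which is not \eqref{E:Asymptotic_Profile}.

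Your fallback is the correct computation, and it is the paper's. Integrating $\tfrac12\partial_\xi|\widehat f|^2$ gives $\Im\langle f,Xf\rangle=-\tfrac12|\widehat f(0)|^2$, and the third Wu identity gives $|\widehat f(0)|^2=1/|\la|$.

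The commutator route is a detour. Since $Xf\in H^1_+$, both $\langle f,XL_{u_0}f\rangle$ and $\langle f,L_{u_0}Xf\rangle$ equal $\la\langle f,Xf\rangle$. So the relation $\langle f,[X,L_{u_0}]f\rangle=0$ only reproduces the Wu identity $\langle f,u_0\rangle I_+(f)=2\pi$. The quantity $\Im\langle f,Xf\rangle$ enters only if you move $X$ across the pairing, and that is exactly the integration by parts in your fallback.
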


\begin{proof}
We recall from \eqref{EFG} that 
\begin{equation*}
    \Pi u(t, z - 2t \la) = \tfrac{1}{2\pi i} I_+ \Bigl( \bigl[X - 2t (L_{u_0} - \la) - z\bigr]^{-1} \Pi u_{0} \Bigr).
\end{equation*}
We will proceed by prudently approximating $u_0$ and using \eqref{BEFG} to control the remainder.  In Case~(ii), however, we must first extract a component of $u_0$ connected to the eigenfunction $f$ corresponding to the eigenvalue $\lambda$.

Recalling that eigenvalues are simple, in Case~(ii) we write
\begin{equation}\label{76}
    \CS u_0 = \langle f,\CS u_0\rangle f + u_0^\perp \qtq{and} f = \tfrac1{\langle f, Xf \rangle - z} \bigl[X - 2t (L_{u_0} - \la) - z\bigr] f  - g,
\end{equation}
where both
\begin{equation}\label{77}
u_0^\perp \qtq{and} g := \tfrac1{\langle f, Xf \rangle - z}[X f - \langle f, X f \rangle f] \qtq{lie in} \Ker( L_{u_0} - \lambda) ^ \perp.
\end{equation}
Combining these relations and setting $v_0:=u_0^\perp - \langle f,\CS u_0\rangle g \in  L^2_+(\R)$, we deduce that $v_0 \in \Ker( L_{u_0} - \lambda) ^ \perp$ and
\begin{equation}
    \bigl[X - 2t (L_{u_0} - \la) - z\bigr]^{-1} \Pi u_{0}
    	= \tfrac{\langle f,\CS u_0\rangle}{\langle f, Xf \rangle - z} f + \bigl[X - 2t (L_{u_0} - \la) - z\bigr]^{-1} v_0 .
\end{equation}
In Case~(i), we define instead $v_0:=\CS u_0 \in \Ker( L_{u_0} - \lambda) ^ \perp$.

Proceeding in this way, we may unify the two cases and reduce our obligations to proving the following: For each $z\in \C_+$ and $v_0 \in \Ker( L_{u_0} - \lambda) ^ \perp$,
\begin{equation}\label{goal78}
    I_+ \Bigl( \bigl[X - 2t (L_{u_0} - \la) - z\bigr]^{-1} v_0 \Bigr) \longrightarrow0 \qtq{as $t\to\pm\infty$}
\end{equation}
and, secondly, for any $L^2$-normalized eigenvector $f\in \Ker( L_{u_0} - \lambda)$,
\begin{equation}\label{goal79}
	\tfrac{1}{2\pi i}I_+ \Bigl( \tfrac{\langle f,\CS u_0\rangle}{\langle f, Xf \rangle - z} f \Bigr) = \tfrac{2i|\la|}{2|\la|(z-c) + i }, \qtq{where} c:=\Re\langle f, X f\rangle.
\end{equation}

We begin with the second obligation \eqref{goal79}.  While $c$ captures the `spatial center' of the eigenfunction, the imaginary part is related to the eigenvalue:
\begin{align}
    \Im\langle f, X f\rangle 
    & = \int_0^\infty \Im \Bigl[\mkern2mu \overline{\widehat{f}(\xi)}\, i\p_\xi \widehat{f}(\xi)\Bigr] \,d\xi = \tfrac{1}{2} \int_0^\infty \p_\xi \big| \widehat{f}(\xi) \big|^2  \,d\xi = -\tfrac{1}{2} |\widehat{f}(0)|^2 
\end{align} 
and so by the Wu relations \eqref{Wu relation Hs},
\begin{equation}
\Im\langle f, X f\rangle  = \tfrac{1}{2\lambda}  .
\end{equation}
The Wu relations also yield
\begin{equation}
\langle f, \CS u_0 \rangle I_+(f) = - \lambda^{-1} |\langle f, \CS u_0 \rangle|^2   = 2\pi .
\end{equation}
The claim \eqref{goal79} now follows from elementary manipulations.

This leaves us to demonstrate \eqref{goal78}.  As $L_{u_0}$ is selfadjoint,
\begin{equation}
\Ker( L_{u_0} - \lambda) ^ \perp = \overline{ \Range( L_{u_0} - \lambda) }.
\end{equation}
Thus, in both cases, $v_0$ may be well approximated arbitrarily well (in $L^2$-sense) by $( L_{u_0} - \lambda) h$ with $h\in H^1_+(\R)$.  Recall that $( L_{u_0} - \lambda)$ defines a bounded map from $H^1_+(\R)$ to $L^2_+(\R)$ and that the collection of $h$ with $\widehat h\in C^\infty_c((0,\infty))$ is dense in $H^1_+(\R)$.  In this way, we see that for any $\eps>0$, we may choose $h_\eps$ and $r_\eps$ so that
\begin{equation}
v_0 = ( L_{u_0} - \lambda) h_\eps + r_\eps \qtq{with} \widehat h_\eps\in C^\infty_c((0,\infty)) \qtq{and} \| r_\eps \|_{L^2_+} < \eps .
\end{equation}

The contribution of $r_\eps$ to \eqref{goal78} is controlled by \eqref{BEFG}.  Concretely,  
\begin{equation}\label{E: I+(r)}
    \left| I_+ \Bigl( \bigl[X - 2t (L_{u_0} - \la) - z\bigr]^{-1} r_\eps \Bigr) \right| \lesssim (\Im z )^{-1/2} \|r_\eps\|_{L^2_+} \lesssim_z \eps,
\end{equation}
uniformly for $t \in \R$.

By construction, $h_\eps\in \D(X)\cap H^1_+(\R)$ and $I_+(h_\eps)=0$; moreover,
\begin{equation}
 v_0-r_\eps = ( L_{u_0} - \lambda) h_\eps = -\tfrac1{2t} \bigl[X - 2t (L_{u_0} - \la) - z\bigr] h_\eps + \tfrac1{2t} \bigl[X - z\bigr] h_\eps.
\end{equation}
From this and \eqref{BEFG}, we deduce that 
\begin{equation}\label{E: I+(v-r)}
    \left| I_+ \Bigl( \bigl[X - 2t (L_{u_0} - \la) - z\bigr]^{-1} (v_0-r_\eps) \Bigr) \right| \lesssim_z \tfrac{1}{|t|} \Bigl[ \|X h_\eps\|_{L^2} + \| h_\eps\|_{L^2}\Bigr].
\end{equation}
As $\eps>0$ was arbitrary, claim \eqref{goal78} follows by combining \eqref{E: I+(r)} and \eqref{E: I+(v-r)}.
\end{proof}

\begin{Rk}
While we have focused on translations by $-2\lambda t$, our rate of convergence would actually allow translations that are $-2\lambda t+ o(t)$.
\end{Rk}

We are finally in a position to complete the proof of Theorem~\ref{T:Intro Strong_loc_cv_L2}, which we restate here for the convenience of the reader:

\begin{theorem}\label{T:Strong_loc_cv_L2}
Let $u(t)$ denote the solution to \eqref{BO} with data $u(0)=u_0\in L^2(\R)$. Given $\la \in \R$, we have:\\
{\upshape (i)} If $\lambda$ is not eigenvalue of $L_{u_0}$, then for any $R>0$,
\begin{equation*}
\lim_{|t| \to \infty}\, \|  u(t, x - 2t \la) \|_{L^2([-R,R])} = 0.
\end{equation*}
{\upshape (ii)} If $\lambda$ is an eigenvalue of $L_{u_0}$ and $f$ is an $L^2$-normalized eigenfunction associated with $\la$, then for any $R>0$,
\begin{equation*}
\lim_{|t| \to \infty} \, \left\| u(t, x - 2t \la) - \tfrac{4|\la|}{4\lambda^2(x-c)^2 + 1 } \right\|_{L^2([-R,R])} = 0, \qtq{where} c: = \Re\langle f, Xf \rangle .
\end{equation*}
\end{theorem}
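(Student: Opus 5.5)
We upgrade the pointwise convergence in $\C_+$ from Proposition~\ref{P:Asym_profile} to strong $L^2_{\mathrm{loc}}$ convergence on the real line, using compactness together with an equicontinuity (tightness in frequency) argument. Set $w(t,x):=u(t,x-2t\la)$ and let $Q(x)$ denote the claimed limit profile: $Q\equiv 0$ in case (i), and $Q=\frac{4|\la|}{4\la^2(x-c)^2+1}$ in case (ii). Since $u$ is real-valued, $u=2\Re\Pi u$. Also $\Pi u(t,\cdot-2t\la)$ is the Cauchy--Szeg\H{o} projection of $w(t)$, because translation commutes with $\Pi$. For case (ii), a direct computation shows that $\frac{2i|\la|}{2|\la|(x-c)+i}$ is the boundary value of $\Pi Q$.

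\emph{Step 1 (weak convergence).} The $L^2$ norm is conserved, so $w(t)$ is bounded in $L^2$. Evaluation at $z\in\C_+$ is pairing against the Riesz representative $\frac{1}{2\pi i}(x-\bar z)^{-1}$, and the span of these functions is dense in $L^2_+(\R)$. Proposition~\ref{P:Asym_profile} therefore gives $\Pi w(t)\rightharpoonup \Pi Q$ weakly in $L^2_+$ as $|t|\to\infty$. Taking real parts, $w(t)\rightharpoonup Q$ weakly in $L^2(\R)$.

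\emph{Step 2 (equicontinuity).} To upgrade to strong convergence on $[-R,R]$, it suffices to show that the family $\{w(t)\}$ is $L^2$-equicontinuous, i.e.\ that
\[
\lim_{h\to0}\sup_t\|w(t,\cdot+h)-w(t)\|_{L^2}=0,
\]
or equivalently that it has uniformly small high-frequency tails. Once this holds, Riesz--Kolmogorov compactness on bounded intervals turns weak convergence into strong $L^2([-R,R])$ convergence; every subsequence then has a further subsequence converging strongly to the unique weak limit $Q$.

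Equicontinuity is the main obstacle. Translation does not affect the high-frequency tail, so it suffices to bound $\|P_{>K}u(t)\|_{L^2}$ uniformly in $t$, with a bound tending to $0$ as $K\to\infty$. We would obtain this from the Lax structure. By Proposition~\ref{P:Unitary_Propagator}(ii)--(iii), $\Pi u(t)=U(t)\Pi u_0$ and $L_{u(t)}U(t)=U(t)L_{u_0}$. Hence, for $\kappa\ge\kappa_0$ and a large parameter $K$,
\[
\|K(L_{u(t)}+\kappa+K)^{-1}\Pi u(t)\|_{L^2}\ \text{ and }\ \|K(L_{u(t)}+K)^{-1/2}\cdots\|
\]
are conserved. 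The useful conserved quantity is
\[
\bigl\|\bigl(1-K(L_{u(t)}+K)^{-1}\bigr)\Pi u(t)\bigr\|_{L^2}=\bigl\|(L_{u_0}+\kappa)(L_{u_0}+K)^{-1}\Pi u_0\bigr\|_{L^2},
\]
which tends to $0$ as $K\to\infty$ for the fixed vector $\Pi u_0$ (strong resolvent convergence). It remains to compare $(L_{u(t)}+\kappa)(L_{u(t)}+K)^{-1}$ with $(L_0+\kappa)(L_0+K)^{-1}$ uniformly in $t$. By the resolvent identity, their difference is
\[
K\bigl[(L_0+K)^{-1}-(L_{u}+K)^{-1}\bigr]=K(L_0+K)^{-1}\Pi u\,(L_u+K)^{-1}.
\]
Combining \eqref{E: PIu(L0+k)}-type bounds with \eqref{E:Equiv_norm}, its operator norm is $\lesssim K^{-1/2}\|u\|_{L^2}$ on the bulk, uniformly in $t$ because $\|u(t)\|_{L^2}$ is conserved. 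Note, however, that $\Pi u$ appears inside, so this needs care. Applying the difference to $\Pi u(t)$ gives
\[
\|(L_0+K)^{-1}\Pi u\,(L_u+K)^{-1}\Pi u\|_{L^2}\lesssim K^{-1/2}\|u\|_{L^2}\,\|(L_u+K)^{-1}\Pi u\|_{H^{1/2}}\lesssim K^{-1},
\]
which is good enough. We conclude that $\sup_t\|(L_0+\kappa)(L_0+K)^{-1}\Pi u(t)\|_{L^2}\to0$ as $K\to\infty$, which controls the frequencies $\xi\gtrsim K$ of $\Pi u(t)$. The negative frequencies are the complex conjugate of these since $u$ is real.

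\emph{Step 3 (conclusion).} Combining the weak convergence from Step 1 with the uniform tail control from Step 2, and splitting $w(t)-Q$ into low and high frequencies, we argue as follows. The high frequencies are uniformly small in $L^2$. The low-frequency part converges weakly and is smooth and uniformly bounded in $H^1$, so by Rellich it converges strongly on $[-R,R]$. This yields both (i) and (ii). The delicate point is the uniform-in-time frequency tightness of Step 2. If the resolvent comparison proves awkward, the fallback is to work with the conserved quantity $\langle \Pi u,(L_u+K)^{-1}\Pi u\rangle$, compare it with the free one, and use the resulting $H^{-1/2}$-type tail bound.
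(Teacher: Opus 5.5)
Your proposal is correct and follows the paper's architecture. You take pointwise convergence in $\C_+$ from Proposition~\ref{P:Asym_profile}, prove uniform-in-time smallness of the high frequencies, apply Rellich compactness on $[-R,R]$ to the low frequencies, and identify the limit through its holomorphic extension. You phrase that last step as weak convergence of the whole family, using density of the Cauchy kernels $(x-\bar z)^{-1}$ in $L^2_+(\R)$; the paper instead extracts subsequences and evaluates at points of $\C_+$, which amounts to the same thing.

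The one genuine difference is Step~2. The paper simply cites the $L^2$-equicontinuity of Benjamin--Ono orbits from \cite{Killip2024}. You instead derive it from the intertwining $U(t)(L_{u_0}+K)^{-1}=(L_{u(t)}+K)^{-1}U(t)$, which follows from part (ii) of Proposition~\ref{P:Unitary_Propagator}, together with $\Pi u(t)=U(t)\Pi u_0$. This is a valid, self-contained substitute built from machinery the paper already has.

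Two slips in that step should be fixed:
\begin{itemize}
\item $1-K(L+K)^{-1}$ equals $L(L+K)^{-1}$, not $(L+\kappa)(L+K)^{-1}$. Either choice of conserved quantity works.
\item Your last displayed estimate drops the prefactor $K$. Once it is restored, measuring $(L_u+K)^{-1}\Pi u(t)$ in $H^{1/2}$ gives only $O(1)$, not decay.
\end{itemize}

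No extra care is needed for the inner $\Pi u$, because your earlier operator-norm bound already closes the argument. For $K\geq\kappa_0$,
\[
K\bigl\|(L_0+K)^{-1}\Pi u\,(L_u+K)^{-1}\bigr\|_{L^2_+\to L^2_+}\le K\cdot(2\pi K)^{-1/2}\|u\|_{L^2}\cdot\tfrac{2}{K}\lesssim K^{-1/2}\|u_0\|_{L^2}.
\]
This uses the adjoint of \eqref{E: PIu(L0+k)} (recall $u$ is real) and the bound $\|(L_u+K)^{-1}\|_{L^2_+\to L^2_+}\le 2/K$ from \eqref{L below}. Applying it to $\Pi u(t)$, whose norm is at most $\|u_0\|_{L^2}$, gives the uniform tail bound you need.
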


\begin{proof}[Proof of Theorem~\ref{T:Strong_loc_cv_L2}]
Defining 
\begin{equation*}
    u_{\infty, \la}(x) := 
    \begin{cases} 
      \frac{2i|\la|}{2|\la|(x-c ) + i } & \text{if } \la \in \sigma_d(L_{u_0}), \\
      0 & \text{if } \la \notin \sigma_d(L_{u_0}),
    \end{cases}
\end{equation*}
Proposition \ref{P:Asym_profile} yields
\begin{align}\label{315}
\lim_{|t| \to \infty} \Pi u(t, z - 2t \la) = u_{\infty, \la}(z) \qtq{for each} z\in \C_+.
\end{align}

To establish convergence on the real line in the local $L^2$ topology, we rely on the precompactness of the set
\begin{equation*}
    \mathcal{F}_\la = \bigl\{ \Pi u(t, \cdot - 2t\la)  :\, t \in \R \bigr\}
\end{equation*}
in $L^2([-R,R])$ for any $R>0$. A fundamental property of the Benjamin--Ono flow established in  \cite{Killip2024} is that the orbit $\{u(t) : \, t \in \R\}$ of the solution is $L^2$-equicontinuous. Because spatial translation is an isometry that preserves the frequency distribution, the family $\mathcal{F}_\la$ is also $L^2$-equicontinuous. This $L^2$-equicontinuity ensures that high-frequency projections of functions in $\mathcal F_\la$ are uniformly small. Concretely, for any $\eps>0$, there exists $M=M(\eps)$ so that
\begin{align}\label{118}
\sup_{f\in \mathcal F_\la}\, \|P_{>M} f\|_{L^2}< \eps.
\end{align}

On the other hand, by the Rellich--Kondrachov theorem, the family of functions $\{P_{\leq M} \Pi u(t, \cdot - 2t\la):  t \in \R\}$ is precompact in $L^2([-R, R])$ for any finite $R > 0$. Combining this with \eqref{118}, we deduce that the family $\mathcal{F}_\la$ is precompact in $L^2([-R, R])$.

Consequently, for any sequence of times $t_n \to \infty$, there exists a subsequence (which we still denote by $t_n$) and a limit profile $w_{\infty, \la} \in L^2_{\text{loc}}(\R)$ such that
\begin{equation}\label{119}
    \Pi u(t_n, \cdot - 2t_n\la) \longrightarrow w_{\infty, \la} \quad \text{in } L^2([-R, R]) \text{ for any } R > 0.
\end{equation}

To uniquely identify the limit $w_{\infty, \la}$, we use the extension to the upper half-plane. By the conservation of mass, the sequence $\Pi u(t_n, \cdot - 2t_n\la)$ is uniformly bounded in $L^2(\R)$. As it converges in $L^2_{\text{loc}}(\R)$ to $w_{\infty, \la}$, it must converge weakly to $w_{\infty, \la}$ in $L^2(\R)$. As evaluation at a point in the upper half-plane is a bounded linear functional on $L^2(\R)$, we find that
\begin{equation*}
    \lim_{n \to \infty} \Pi u(t_n, z - 2t_n\la) = w_{\infty, \la}(z) \qtq{for all} z\in \C_+.
\end{equation*}
Comparing this with \eqref{315}, we conclude that the holomorphic extensions of $w_{\infty, \la}$ and $u_{\infty, \la}$ agree and so $w_{\infty, \la}(x) = u_{\infty, \la}(x)$ for almost every $x\in\R$. 

As every sequence $t_n \to \infty$ admits a subsequence that converges strongly in $L^2([-R,R])$ to the same limit $u_{\infty, \la}$, we conclude that 
$$
\lim_{|t| \to \infty} \, \bigl\| \Pi u(t, x - 2t \la) - u_{\infty, \la}(x) \bigr\|_{L^2([-R,R])} = 0.
$$ 
Finally, as the solution to \eqref{BO} is real-valued, we have $u = 2\Re \Pi u$.  The claim now follows from the observation
\begin{align*}
2\Re u_{\infty, \la} (x)  &= \tfrac{4|\la|}{4\lambda^2(x-c)^2 + 1 } = Q_{\la, c}(x) \qtq{whenever} \la\in \sigma_d(L_{u_0}). \qedhere
\end{align*}
\end{proof}

\section{Asymptotic Stability}\label{S:Asymptotic_Stability}

In this section, we establish the asymptotic stability of Benjamin--Ono multisolitons. Concretely, we will show that an $L^2$-perturbation of an $N$-multisoliton leads to an $L^2$-solution $u(t)$ that decomposes asymptotically into the sum of $N$ main traveling solitary waves. By evaluating the flow along the characteristic trajectory of each primary eigenvalue, we can isolate the asymptotic profiles.

\begin{theorem}[Asymptotic Stability]\label{T:Main_Asymptotic}
Fix an integer $N\geq 1$ and a set $\Lambda$ of negative parameters $\la_1 < \dots < \la_N$. There exists $\delta>0$ such that if the initial data $u_0 \in L^2(\R)$ satisfies
\begin{equation}\label{907}
\| u_0 - Q_{\Lambda, \vec c} \|_{L^2} < \delta \qtq{for some} \vec{c}= (c_1, \ldots, c_N)\in \R^N,
\end{equation}
then there exists a set $\tilde \Lambda$ of negative parameters $\tilde{\la}_1 < \dots < \tilde{\la}_N$ and there exist asymptotic spatial centers $\tilde c_1, \ldots, \tilde c_N \in \R$ such that the solution $u(t)$ to \eqref{BO} with initial data $u(0)=u_0$ satisfies
\begin{align}\label{E:Main_Asymptotic_Limit}
\lim_{|t| \to \infty}\, \bigl\| u(t, \cdot - 2t\tilde\la_j) - Q_{\tilde\Lambda,\, \tilde c(t)}(\cdot - 2t\tilde \la_j) \bigr\|_{L^2([-R, R])} = 0
\end{align}
for every $1\leq j \leq N$ and $R > 0$. Here, $\tilde c_j(t) := \tilde c_j - 2t\tilde \lambda_j$, mirroring \eqref{MS3}, and $\tilde c_j$ is given by \eqref{tilde c}. Moreover, we have the stability estimates
\begin{align}\label{E:337}
\limsup_{|t| \to \infty}\, \bigl\| u(t) - Q_{\tilde\Lambda,\, \tilde{c}(t)}\bigr\|_{L^2}^2&\lesssim \delta
\end{align}
and 
\begin{align}\label{344}
\sup_{1\leq j\leq N} \, |\lambda_j - \tilde \lambda_j| + \sup_{1\leq j\leq N} \, |c_j - \tilde c_j| \lesssim \delta.
\end{align}
\end{theorem}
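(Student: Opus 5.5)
We combine Proposition~\ref{P:Spectral_bounds} with Theorem~\ref{T:Strong_loc_cv_L2}. First choose $\delta$ as in Proposition~\ref{P:Spectral_bounds}. This gives the $N$ primary eigenvalues $\tilde\la_1<\dots<\tilde\la_N$ of $L_{u_0}$ and the normalized eigenfunctions $\tilde f_j$. We define
\begin{equation}\label{tilde c}
\tilde c_j:=\Re\langle \tilde f_j, X\tilde f_j\rangle,
\end{equation}
which is well defined by Corollary~\ref{C: f in X}. The bounds \eqref{E:continuity_lambda} and \eqref{centers close} then give \eqref{344} directly. For this to work we need $c_j=\Re\langle f_j,Xf_j\rangle$ to coincide with the parameters $c_j$ in $Q_{\Lambda,\vec c}$. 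We would check this by applying Theorem~\ref{T:Strong_loc_cv_L2}(ii) to the exact multisoliton: the pure multisoliton $Q_{\Lambda,\vec c(t)}$ converges along the ray $x-2t\la_j$ to the one-soliton centered at $c_j$, and uniqueness of limits identifies the two. Shrinking $\delta$ keeps the $\tilde\la_j$ distinct and negative.

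Next we prove \eqref{E:Main_Asymptotic_Limit}. Theorem~\ref{T:Strong_loc_cv_L2}(ii), applied with $\la=\tilde\la_j$, gives
\[
u(t,\cdot-2t\tilde\la_j)\to Q_{\tilde\la_j,\tilde c_j}(\cdot)
\]
in $L^2([-R,R])$. It then suffices to show that
\[
Q_{\tilde\Lambda,\tilde c(t)}(\cdot-2t\tilde\la_j)\to Q_{\tilde\la_j,\tilde c_j}
\]
in $L^2([-R,R])$. We can get this either from the known separation of multisolitons into one-solitons traveling at distinct speeds $-2\tilde\la_k$, or, to stay within the paper, by applying Theorem~\ref{T:Strong_loc_cv_L2}(ii) to the exact multisoliton $Q_{\tilde\Lambda,\tilde c}$. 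For that second route we need its $j$-th eigenfunction center to be $\tilde c_j$, which is the same identification as in the first step. The main obstacle is this identification of $\Re\langle f,Xf\rangle$ with the parameter $c_j$ in Definition~\ref{D:multisoliton}. Our first attempt would be to compute directly, using that for the multisoliton $\Pi Q$ lies in the span of the eigenfunctions (Theorem~\ref{T: Sun}) and that the explicit formula \eqref{EFG} restricted to that span is an $N\times N$ matrix computation. That computation should reproduce the matrix $M$ of \eqref{MS2}, with $X$ acting on the span with diagonal entries $c_j+\tfrac{i}{2\la_j}$.

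For \eqref{E:337} we use mass conservation and the trace bound. Since $\|u(t)\|_{L^2}=\|u_0\|_{L^2}$ and $\|Q_{\tilde\Lambda,\cdot}\|_{L^2}^2=2\pi\sum|\tilde\la_j|$ by \eqref{soliton mass}, we can write
\[
\|u(t)-Q\|^2=\|u_0\|^2-2\pi\sum|\tilde\la_j|+2\pi\sum|\tilde\la_j|-2\Re\langle u(t),Q\rangle+\|Q\|^2 .
\]
As $|t|\to\infty$, $Q_{\tilde\Lambda,\tilde c(t)}$ is asymptotically a sum of separated one-solitons, and each of them is matched locally by $u(t)$ thanks to \eqref{E:Main_Asymptotic_Limit}. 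Choosing $R$ large and letting the window tails of the Lorentzians go to zero, we get $\langle u(t),Q\rangle\to\|Q\|^2$. Hence
\[
\limsup\|u(t)-Q\|^2\le \|u_0\|^2-\|Q_{\tilde\Lambda}\|^2 .
\]
Bounding this by $\bigl|\|u_0\|^2-\|Q_{\Lambda}\|^2\bigr|+2\pi\sum|\la_j-\tilde\la_j|\lesssim\delta$ gives \eqref{E:337}.
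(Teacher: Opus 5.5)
Your proposal is correct and follows essentially the same route as the paper. It uses Proposition~\ref{P:Spectral_bounds} for \eqref{344}, Theorem~\ref{T:Strong_loc_cv_L2}(ii) combined with the $L^2$ decoupling of multisolitons for \eqref{E:Main_Asymptotic_Limit}, and mass conservation together with \eqref{soliton mass} for \eqref{E:337}. You also explicitly check that $\Re\langle f_j, Xf_j\rangle$ coincides with the parameter $c_j$ of $Q_{\Lambda,\vec c}$, by uniqueness of the local limits for the exact multisoliton. The paper uses this identification silently when it reads \eqref{centers close} as \eqref{344}, and your argument for it is sound.
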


\begin{proof}
Choosing $\delta>0$ small as in Proposition~\ref{P:Spectral_bounds}, we may invoke this result to find $N$ eigenvalues $\tilde \la_1<\ldots <\tilde \la_N$ of $L_{u_0}$ together with $N$ spatial shifts
\begin{equation}\label{tilde c}
\tilde c_j:= \Re\langle \tilde f_j, X\tilde f_j\rangle,
\end{equation}
where $\tilde f_j$ denotes an $L^2$-normalized eigenfunction corresponding to the eigenvalue~$\tilde \la_j$.  Claim \eqref{344} follows from \eqref{E:continuity_lambda} and \eqref{centers close}.

Moreover, for each $1\leq j\leq N$, Theorem~\ref{T:Strong_loc_cv_L2} yields
\begin{equation*}
\lim_{|t| \to \infty} \,\left\| u(t, \cdot - 2t\tilde{\la}_j) - Q_{\tilde{\la}_j, \tilde c_j} \right\|_{L^2([-R, R])} = 0 \qtq{for all} R>0,
\end{equation*}
or equivalently, 
\begin{align}\label{507}
\lim_{|t| \to \infty} \,\left\| u(t, \cdot - 2t\tilde{\la}_j) - Q_{\tilde{\la}_j, \tilde c_j(t)}(\cdot - 2t\tilde{\la}_j) \right\|_{L^2([-R, R])} = 0  \qtq{for all} R>0.
\end{align}

By direct analysis of the multisoliton formula \eqref{MS1}, Matsuno showed that the $N$ multisoliton $Q_{\tilde\Lambda,\, \tilde{c}(t)}$ decouples asymptotically as $|t|\to \infty$ into the sum of the $N$ solitary waves $Q_{\tilde{\la}_j, \tilde c_j(t)} $.  We need this decoupling result in $L^2$-sense, that is,
\begin{align}\label{508}
\lim_{|t| \to \infty} \,\Bigl\|  Q_{\tilde\Lambda,\, \tilde{c}(t)}- \sum_{j=1}^N Q_{\tilde{\la}_j, \tilde c_j(t)} \Bigr\|_{L^2} = 0;
\end{align}
this was established by the authors in \cite[Proposition 3.2]{Killip2024}.  It is essential here that the parameters $\tilde \la_j$ are distinct, which causes the solitary waves $Q_{\tilde{\la}_j, \tilde c_j(t)}$ to asymptotically decouple as $|t|\to \infty$. Claim \eqref{E:Main_Asymptotic_Limit} follows from \eqref{507} and \eqref{508}.

We now turn to \eqref{E:337}. Using the conservation of $L^2$ norm by solutions to \eqref{BO}, we may write
\begin{align*}
\bigl\| u(t) - Q_{\tilde\Lambda,\, \tilde{c}(t)}\bigr\|_{L^2}^2
&= \|u(t)\|_{L^2}^2 + \bigl\|Q_{\tilde\Lambda,\, \tilde{c}(t)}\bigr\|_{L^2}^2 - 2\bigl\langle u(t), Q_{\tilde\Lambda,\, \tilde{c}(t)}\bigr\rangle\\
& = \|u_0\|_{L^2}^2 + \bigl\|Q_{\tilde\Lambda,\, \tilde{c}}\bigr\|_{L^2}^2 - 2\bigl\langle u(t), Q_{\tilde\Lambda,\, \tilde{c}(t)}\bigr\rangle.
\end{align*}
Using \eqref{507} and \eqref{508}, we obtain
\begin{align*}
\lim_{|t|\to \infty} \,\bigl\langle u(t), Q_{\tilde\Lambda,\, \tilde{c}(t)}\bigr\rangle
&= \sum_{j=1}^N\, \lim_{|t|\to \infty} \,\bigl\langle u(t, x-2t\la_j), Q_{\tilde\la_j,\, \tilde c_j}\bigr\rangle\\
&= \sum_{j=1}^N\, \bigl\|Q_{\tilde\la_j,\, \tilde c_j}\bigr\|_{L^2([-R,R])}^2 + o(1) \qtq{as} R\to\infty.
\end{align*}
Letting $R\to \infty$ we deduce
\begin{align*}
\lim_{|t|\to \infty} \,\bigl\| u(t) - Q_{\tilde\Lambda,\, \tilde{c}(t)}\bigr\|_{L^2}^2
&= \|u_0\|_{L^2}^2-\bigl\|Q_{\tilde\Lambda,\, \tilde{c}}\bigr\|_{L^2}^2\\
&= \|u_0\|_{L^2}^2-\bigl\|Q_{\Lambda,\, \vec c}\bigr\|_{L^2}^2 + 2\pi \sum_{j=1}^N |\la_j|-|\tilde \la_j|,
\end{align*}
where we used \eqref{soliton mass} to obtain the last inequality. Claim \eqref{E:337} now follows from \eqref{907} and \eqref{344}.
\end{proof}

\bibliographystyle{amsplain}
\bibliography{BOrefs}

\providecommand{\bysame}{\leavevmode\hbox to3em{\hrulefill}\thinspace}
\providecommand{\MR}{\relax\ifhmode\unskip\space\fi MR }
\providecommand{\MRhref}[2]{%
  \href{http://www.ams.org/mathscinet-getitem?mr=#1}{#2}
}
\providecommand{\href}[2]{#2}
\begin{thebibliography}{10}

\bibitem{Albert1992}
J.~P. Albert, \emph{Positivity properties and stability of solitary-wave
  solutions of model equations for long waves}, Comm. PDE \textbf{17} (1992),
  no.~1-2, 1--22. \MR{1151253}

\bibitem{Albert1999}
\bysame, \emph{Concentration compactness and the stability of solitary-wave
  solutions to nonlocal equations}, Applied analysis ({B}aton {R}ouge, {LA},
  1996), Contemp. Math., vol. 221, Amer. Math. Soc., Providence, RI, 1999,
  pp.~1--29. \MR{1647189}

\bibitem{MR887857}
J.~P. Albert, J.~L. Bona, and D.~B. Henry, \emph{Sufficient conditions for
  stability of solitary-wave solutions of model equations for long waves},
  Phys. D \textbf{24} (1987), no.~1-3, 343--366. \MR{887857}

\bibitem{BKV25}
R.~Badreddine, R.~Killip, and M.~Visan, \emph{Orbital stability of
  {B}enjamin--{O}no multisolitons}, Preprint arXiv:2509.14153 (2025).

\bibitem{Benjamin1967}
T.~B. Benjamin, \emph{Internal waves of permanent form in fluids of great
  depth}, J. Fluid Mech. \textbf{29} (1967), no.~3, 559--592.

\bibitem{MR715035}
D.~P. Bennett, R.~W. Brown, S.~E. Stansfield, J.~D. Stroughair, and J.~L. Bona,
  \emph{The stability of internal solitary waves}, Math. Proc. Cambridge
  Philos. Soc. \textbf{94} (1983), no.~2, 351--379. \MR{715035}

\bibitem{Bock1979}
T.~L. Bock and M.~D. Kruskal, \emph{A two-parameter {M}iura transformation of
  the {B}enjamin--{O}no equation}, Phys. Lett. A \textbf{74} (1979), no.~3--4,
  173--176. \MR{591320}

\bibitem{MR2082818}
J.~L. Bona, Y.~Liu, and N.~V. Nguyen, \emph{Stability of solitary waves in
  higher-order {S}obolev spaces}, Commun. Math. Sci. \textbf{2} (2004), no.~1,
  35--52. \MR{2082818}

\bibitem{MR897729}
J.~L. Bona, P.~E. Souganidis, and W.~A. Strauss, \emph{Stability and
  instability of solitary waves of {K}orteweg--de {V}ries type}, Proc. Roy.
  Soc. London Ser. A \textbf{411} (1987), no.~1841, 395--412. \MR{897729}

\bibitem{Case1979}
K.~M. Case, \emph{Properties of the {B}enjamin--{O}no equation}, J. Math. Phys.
  \textbf{20} (1979), no.~5, 972--977. \MR{531298}

\bibitem{MR516327}
H.~H. Chen, Y.~C. Lee, and N.~R. Pereira, \emph{Algebraic internal wave
  solitons and the integrable {C}alogero-{M}oser-{S}utherland {$N$}-body
  problem}, Phys. Fluids \textbf{22} (1979), no.~1, 187--188. \MR{516327}

\bibitem{chen2025explicit}
X.~Chen, \emph{Explicit formula for the {B}enjamin--{O}no equation with square
  integrable and real valued initial data and applications to the zero
  dispersion limit}, Pure and Applied Analysis \textbf{7} (2025), no.~1,
  101--126.

\bibitem{Davis1967}
R.~E. Davis and A.~Acrivos, \emph{Solitary internal waves in deep water}, J.
  Fluid Mech. \textbf{29} (1967), no.~3, 593--607.

\bibitem{GG26}
L.~Gassot and P.~G{\'e}rard, \emph{Infinite-order multisoliton solutions to the
  {B}enjamin--{O}no equation and soliton resolution}, Preprint arXiv:2603.15419
  (2026).

\bibitem{GGM26}
L.~Gassot, P.~G{\'e}rard, and P.~D. Miller, \emph{A proof of the soliton
  resolution conjecture for the {B}enjamin--{O}no equation}, Preprint
  arXiv:2601.10488 (2026).

\bibitem{MR4662323}
P.~G\'erard, \emph{An explicit formula for the {B}enjamin--{O}no equation},
  Tunis. J. Math. \textbf{5} (2023), no.~3, 593--603. \MR{4662323}

\bibitem{GTT2009}
S.~Gustafson, H.~Takaoka, and T.-P. Tsai, \emph{Stability in {$H^{1/2}$} of the
  sum of {$K$} solitons for the {B}enjamin--{O}no equation}, J. Math. Phys.
  \textbf{50} (2009), no.~1, 013101, 14. \MR{2492606}

\bibitem{Joseph1977}
R.~I. Joseph, \emph{Multi-soliton-like solutions to the {B}enjamin--{O}no
  equation}, J. Mathematical Phys. \textbf{18} (1977), no.~12, 2251--2258.
  \MR{452116}

\bibitem{KenMart2009}
C.~E. Kenig and Y.~Martel, \emph{Asymptotic stability of solitons for the
  {B}enjamin--{O}no equation}, Rev. Mat. Iberoam. \textbf{25} (2009), no.~3,
  909--970. \MR{2590690}

\bibitem{Killip2024}
R.~Killip, T.~Laurens, and M.~Vi\c{s}an, \emph{Sharp well-posedness for the
  {B}enjamin--{O}no equation}, Invent. Math. \textbf{236} (2024), no.~3,
  999--1054. \MR{4743514}

\bibitem{KLV2025}
\bysame, \emph{Scaling-critical well-posedness for continuum {C}alogero-{M}oser
  models on the line}, Commun. Am. Math. Soc. \textbf{5} (2025), 284--320.
  \MR{4922705}

\bibitem{LanWang2025}
Y.~Lan and Z.~Wang, \emph{Stability of multi-solitons for the {Benjamin--Ono}
  equation}, Preprint arXiv:2302.14205.

\bibitem{Lax1968}
P.~D. Lax, \emph{Integrals of nonlinear equations of evolution and solitary
  waves}, Comm. Pure Appl. Math. \textbf{21} (1968), 467--490. \MR{235310}

\bibitem{MR1220540}
J.~H. Maddocks and R.~L. Sachs, \emph{On the stability of {K}d{V}
  multi-solitons}, Comm. Pure Appl. Math. \textbf{46} (1993), no.~6, 867--901.
  \MR{1220540}

\bibitem{Matsuno1979}
Y.~Matsuno, \emph{Exact multi-soliton solution of the {Benjamin--Ono}
  equation}, J. Phys. A \textbf{12} (1979), no.~4, 619--621.

\bibitem{MR759718}
\bysame, \emph{Bilinear transformation method}, Mathematics in Science and
  Engineering, vol. 174, Academic Press, Inc., Orlando, FL, 1984. \MR{759718}

\bibitem{Matsuno2006}
\bysame, \emph{The {L}yapunov stability of the {$N$}-soliton solutions in the
  {L}ax hierarchy of the {B}enjamin--{O}no equation}, J. Math. Phys.
  \textbf{47} (2006), no.~10, 103505, 13pp. \MR{2268871}

\bibitem{MR1442235}
Y.~Matsuno and D.~J. Kaup, \emph{Linear stability of multiple internal solitary
  waves in fluids of great depth}, Phys. Lett. A \textbf{228} (1997), no.~3,
  176--181. \MR{1442235}

\bibitem{MeissPereira}
J.~D. Meiss and N.~R. Pereira, \emph{Internal wave solitons}, Phys. Fluids
  \textbf{21} (1978), no.~4, 700--702.

\bibitem{Nakamura1979}
A.~Nakamura, \emph{B\"{a}cklund transform and conservation laws of the
  {B}enjamin--{O}no equation}, J. Phys. Soc. Japan \textbf{47} (1979), no.~4,
  1335--1340. \MR{550203}

\bibitem{MR2202311}
A.~Neves and O.~Lopes, \emph{Orbital stability of double solitons for the
  {B}enjamin--{O}no equation}, Comm. Math. Phys. \textbf{262} (2006), no.~3,
  757--791. \MR{2202311}

\bibitem{Ono1975}
H.~Ono, \emph{Algebraic solitary waves in stratified fluids}, J. Phys. Soc.
  Japan \textbf{39} (1975), no.~4, 1082--1091. \MR{398275}

\bibitem{Sun2021}
R.~Sun, \emph{Complete integrability of the {B}enjamin--{O}no equation on the
  multi-soliton manifolds}, Comm. Math. Phys. \textbf{383} (2021), no.~2,
  1051--1092. \MR{4239837}

\bibitem{MR886343}
M.~I. Weinstein, \emph{Existence and dynamic stability of solitary wave
  solutions of equations arising in long wave propagation}, Comm. Partial
  Differential Equations \textbf{12} (1987), no.~10, 1133--1173. \MR{886343}

\bibitem{MR3484397}
Y.~Wu, \emph{Simplicity and finiteness of discrete spectrum of the
  {B}enjamin--{O}no scattering operator}, SIAM J. Math. Anal. \textbf{48}
  (2016), no.~2, 1348--1367. \MR{3484397}

\end{thebibliography}

\end{document}